\newtheorem{thm}{Theorem}[section]
\newtheorem{lemma}[thm]{Lemma}
\newtheorem{prop}[thm]{Proposition}
\newtheorem{cor}[thm]{Corollary}
\theoremstyle{definition}
\newtheorem{defin}[thm]{Definition}
\theoremstyle{remark}
\newtheorem{rem}[thm]{Remark}
\numberwithin{equation}{section}
\newcommand{\dif}{\mathrm{d}}
\newcommand{\totdif}{\mathrm{D}}
\newcommand{\mc}{C}
\newcommand{\mf}{\mathscr{F}}
\newcommand{\ml}{L}
\newcommand{\mr}{\mathbb{R}}
\newcommand{\prst}{\mathbb{P}}
\newcommand{\stred}{\mathbb{E}}
\newcommand{\ind}{\mathbf{1}}
\newcommand{\mn}{\mathbb{N}}
\newcommand{\mt}{\mathbb{T}}
\DeclareMathOperator{\supt}{supp}
\DeclareMathOperator*{\esssup}{ess\,sup}
\DeclareMathOperator{\diver}{div}
\newcommand{\tec}{{\overset{\cdot}{}}}
\newcommand{\bl}{\big\langle}
\newcommand{\bR}{\big\rangle}
\begin{document}


\title[Degenerate Parabolic SPDE\MakeLowercase{s}]{Degenerate Parabolic Stochastic Partial\\Differential Equations}
\author{Martina Hofmanov\'a}

\address{Department of Mathematical Analysis\\ Faculty of Mathematics and Physics, Char\-les University\\ Sokolovsk\'a~83\\ 186~75 Praha~8\\ Czech Republic\vspace{2mm}}
\address{Institute of Information Theory and Automation of the ASCR\\ Pod~Vod\'arenskou v\v{e}\v{z}\'i~4\\ 182~08 Praha~8\\ Czech Republic\vspace{2mm}}
\address{IRMAR, ENS Cachan Bretagne, CNRS, UEB\\ av. Robert Schuman\\ 35~170 Bruz\\ France}

\email{martina.hofmanova@bretagne.ens-cachan.fr}
\thanks{This research was supported in part by the GA\,\v{C}R Grant no. P201/10/0752.}
\subjclass{60H15, 35R60}
\keywords{degenerate parabolic stochastic partial differential equation, kinetic solution}

\begin{abstract}
We study the Cauchy problem for a scalar semilinear dege\-nerate parabolic partial differential equation with stochastic forcing. In particular, we are concerned with the well-posedness in any space dimension. We adapt the notion of kinetic solution which is well suited for degenerate parabolic problems and supplies a good technical framework to prove the comparison principle. The proof of exis\-tence is based on the vanishing viscosity method: the solution is obtained by a compactness argument as the limit of solutions of nondegenerate approximations.
\end{abstract}


\maketitle

\section{Introduction}

In this paper, we study the Cauchy problem for a scalar semilinear degenerate parabolic partial differential equation with stochastic forcing
\begin{equation}\label{rovnice}
\begin{split}
\dif u+ \diver \big(B(u)\big)\dif t&=\diver\big(A(x)\nabla u\big)\dif t + \varPhi(u)\,\dif W,\quad x\in\mt^N,\;t\in(0,T),\\
u(0)&=u_0,
\end{split}
\end{equation}
where $W$ is a cylindrical Wiener process. Equations of this type are widely used in fluid mechanics since they model the phenomenon of convection-diffusion of ideal fluid in porous media. Namely, the important applications including for instance two or three-phase flows can be found in petroleum engineering or in hydrogeology. For a thorough exposition of this area given from a practical point of view we refer the reader to \cite{petrol} and to the references cited therein.

The aim of the present paper is to establish the well-posedness theory for solutions of the Cauchy problem \eqref{rovnice} in any space dimension. Towards this end, we adapt the notion of kinetic formulation and kinetic solution which has already been studied in the case of hyperbolic scalar conservation laws in both deterministic (see e.g. \cite{vov}, \cite{lpt1}, \cite{lions}, \cite{pert1}, or \cite{perth} for a general presentation) and stochastic setting (see \cite{debus}); and also in the case of deterministic degenerate parabolic equations of second-order (see \cite{chen}). To the best of our knowledge, in the degenerate case, stochastic equations of type \eqref{rovnice} have not been studied yet, neither by means of kinetic formulation nor by any other approach.

The concept of kinetic solution was first introduced by Lions, Perthame, Tadmor in \cite{lions} for deterministic first-order scalar conservation laws and applies to more general situations than the one of entropy solution as considered for example in \cite{car}, \cite{feng}, \cite{kruzk}. Moreover, it appears to be better suited particularly for degenerate parabolic problems since it allows us to keep the precise structure of the parabolic dissipative measure, whereas in the case of entropy solution part of this information is lost and has to be recovered at some stage. This technique also supplies a good technical framework to prove the $L^1$-comparison principle which allows to prove uniqueness. Nevertheless, kinetic formulation can be derived only for smooth solutions hence the classical result \cite{gyongy} giving $L^p$-valued solutions for the nondegenerate case has to be improved (see \cite{hof}).

In the case of hyperbolic scalar conservation laws, Debussche and Vovelle \cite{debus} defined a notion of generalized kinetic solution and obtained a comparison result showing that any generalized kinetic solution is actually a kinetic solution. Accordingly, the proof of existence simplified since only weak convergence of approximate viscous solutions was necessary.

The situation is quite different in the case of parabolic scalar conservation laws. Indeed, due to the parabolic term, the approach of \cite{debus} is not applicable: the comparison principle can be proved only for kinetic solutions (not generalized ones) and therefore strong convergence of approximate solutions is needed in order to prove the existence. Moreover, the proof of the comparison principle itself is much more delicate then in the hyperbolic case. 


The exposition is organised as follows. In Section 2 we review the basic setting and define the notion of kinetic solution. Section 3 is devoted to the proof of uniqueness. We first establish a technical Proposition \ref{propdubling} which then turns out to be the keystone in the proof of comparison principle in Theorem \ref{uniqueness}. We next turn to the proof of existence in Sections 4 and 5. First of all in Section 4, we make an additional hypotheses upon the initial condition and employ the vanishing viscosity method. In particular, we study certain nondegenerate problems and establish suitable uniform estimates for the corresponding sequence of approximate solutions. The compactness argument then yields the existence of a martingale kinetic solution which together with the pathwise uniqueness gives the desired kinetic solution (defined on the original stochastic basis). In the final section, the existence of a kinetic solution is shown for general initial data.

We note that an important step in the proof of existence, identification of the limit of an approximating sequence of solutions, is based on a new general method of constructing martingale solutions of SPDEs (see Propositions \ref{ident1}, \ref{ident2} and the sequel), that does not rely on any kind of martingale representation theorem and therefore holds independent interest especially in situations where these representation theorems are no longer available. First applications were already done in \cite{on1}, \cite{on2} and, in the finite-dimensional case, also in \cite{hofse}.


\section{Notation and main result}

\label{notation}

We now give the precise assumptions on each of the terms appearing in the above equation \eqref{rovnice}. We work on a finite-time interval $[0,T],\,T>0,$ and consider periodic boundary conditions: $x\in\mt^N$ where $\mt^N$ is the $N$-dimensional torus.
The flux function
$$B=(B_1,\dots,B_N):\mr\longrightarrow\mr^N$$
is supposed to be of class $C^1$ with a polynomial growth of its derivative, which is denoted by $b=(b_1,\dots,b_N)$.
The diffusion matrix
$$A=(A_{ij})_{i,j=1}^N:\mt^N\longrightarrow\mr^{N\times N}$$
is of class $C^\infty$, symmetric and positive semidefinite. Its square-root matrix, which is also symmetric and positive semidefinite, is denoted by $\sigma$.

Regarding the stochastic term, let $(\Omega,\mf,(\mf_t)_{t\geq0},\prst)$ be a stochastic basis with a complete, right-continuous filtration. The initial datum may be random in general, namely, we assume $u_0\in L^p(\Omega;L^p(\mt^N))$ for all $p\in[1,\infty)$. The process $W$ is a cylindrical Wiener process: $W(t)=\sum_{k\geq1}\beta_k(t) e_k$ with $(\beta_k)_{k\geq1}$ being mutually independent real-valued standard Wiener processes relative to $(\mf_t)_{t\geq0}$ and $(e_k)_{k\geq1}$ a complete orthonormal system in a separable Hilbert space $\mathfrak{U}$. In this setting, we can assume, without loss of generality, that the $\sigma$-algebra $\mf$ is countably generated and $(\mf_t)_{t\geq 0}$ is the filtration generated by the Wiener process and the initial condition.
For each $z\in L^2(\mt^N)$ we consider a mapping $\,\varPhi(z):\mathfrak{U}\rightarrow L^2(\mt^N)$ defined by $\varPhi(z)e_k=g_k(\cdot,z(\cdot))$. In particular, we suppose that $g_k\in C(\mt^N\times\mr)$ and the following conditions
\begin{equation}\label{linrust}
G^2(x,\xi)=\sum_{k\geq1}\big|g_k(x,\xi)\big|^2\leq L\big(1+|\xi|^2\big),
\end{equation}
\begin{equation}\label{skorolip}
\sum_{k\geq1}\big|g_k(x,\xi)-g_k(y,\zeta)\big|^2\leq L\big(|x-y|^2+|\xi-\zeta|h(|\xi-\zeta|)\big),
\end{equation}
are fulfilled for every $x,y\in\mt^N,\,\xi,\zeta\in\mr$, where $h$ is a continuous nondecreasing function on $\mr_+$ satisfying, for some $\alpha>0$,
\begin{equation}\label{fceh}
h(\delta)\leq C\delta^\alpha,\quad\delta<1.
\end{equation}
The conditions imposed on $\varPhi$, particularly assumption \eqref{linrust}, imply that
$$\varPhi:L^2(\mt^N)\longrightarrow L_2(\mathfrak{U};L^2(\mt^N)),$$
where $L_2(\mathfrak{U};L^2(\mt^N))$ denotes the collection of Hilbert-Schmidt operators from $\mathfrak{U}$ to $L^2(\mt^N)$. Thus, given a predictable process
$$u\in L^2(\Omega;L^2(0,T;L^2(\mt^N))),$$
the stochastic integral $t\mapsto\int_0^t\varPhi(u)\dif W$ is a well defined process taking values in $L^2(\mt^N)$ (see \cite{prato} for detailed construction).

Finally, define the auxiliary space $\mathfrak{U}_0\supset\mathfrak{U}$ via
$$\mathfrak{U}_0=\bigg\{v=\sum_{k\geq1}\alpha_k e_k;\;\sum_{k\geq1}\frac{\alpha_k^2}{k^2}<\infty\bigg\},$$
endowed with the norm
$$\|v\|^2_{\mathfrak{U}_0}=\sum_{k\geq1}\frac{\alpha_k^2}{k^2},\qquad v=\sum_{k\geq1}\alpha_k e_k.$$
Note that the embedding $\mathfrak{U}\hookrightarrow\mathfrak{U}_0$ is Hilbert-Schmidt. Moreover, trajectories of $W$ are $\prst$-a.s. in $C([0,T];\mathfrak{U}_0)$ (see \cite{prato}).

As the next step, we introduce the kinetic formulation of \eqref{rovnice} as well as the basic definitions concerning the notion of kinetic solution.

\begin{defin}[Kinetic measure]
A mapping $m$ from $\Omega$ to the set of nonnegative finite measures over $\mt^N\times[0,T]\times\mr$ is said to be a kinetic measure if
\begin{enumerate}
 \item $m$ is measurable in the following sense: for each $\psi\in C_b(\mt^N\times[0,T]\times\mr)$ the mapping $\langle m,\psi\rangle:\Omega\rightarrow\mr$ is measurable,
 \item $m$ vanishes for large $\xi$: if $B_R^c=\{\xi\in\mr;\,|\xi|\geq R\}$ then
\begin{equation}\label{infinity}
\lim_{R\rightarrow\infty}\stred\,m\big(\mt^N\times[0,T]\times B_R^c\big)=0,
\end{equation}
 \item for all $\psi\in C_b(\mt^N\times\mr),$ the process
$$t\mapsto\int_{\mt^N\times[0,t]\times\mr}\psi(x,\xi)\,\dif m(x,s,\xi)$$
is predictable.
\end{enumerate}

\end{defin}

\begin{defin}\label{formulace}
The kinetic formulation of \eqref{rovnice} is
\begin{equation}\label{kinetic}
\partial_tf+b(\xi)\cdotp\nabla f-\sum_{i,j=1}^N\partial_{x_i}\big(A_{ij}(x)\partial_{x_j}f\big)=\delta_{u=\xi}\varPhi(u)\dot{W}+\partial_\xi\bigg(m-\frac{1}{2}G^2\delta_{u=\xi}\bigg),
\end{equation}
where $m$ is a kinetic measure which consists of two components $m=n_1+n_2$. The measure $n_1$ is known and relates to the diffusion term in \eqref{rovnice}
\begin{equation*}
n_1(x,t,\xi)=(\nabla u)^*A(x)(\nabla u)\delta_{u=\xi}
\end{equation*}
whereas $n_2$ is an unknown nonnegative measure.
\end{defin}

\begin{rem}
The measure $n_1$ is called parabolic dissipative measure and gives us better regularity of solutions in the nondegeneracy zones of the diffusion matrix $A$ (cf. Definition \ref{kinsol} (iii)).
The measure $n_2$ takes account of possible singularities of solution and vanishes in the nondegenerate case.
\end{rem}

We now derive the kinetic formulation in case of a sufficiently smooth $u$ satisfying \eqref{rovnice}. It is a consequence of the It\^{o} formula. \label{odvozeni}

Let us set $\langle\ind_{u>\xi},\theta'\rangle=\int_{\mr}\ind_{u>\xi}\theta'(\xi)\dif \xi=\theta(u)$ for $\theta\in\mc_c^{\infty}(\mr)$ and apply the It\^{o} formula:
\begin{equation*}
\begin{split}
\dif \langle\ind_{u>\xi},\theta'\rangle&=\theta'(u)\big[-\diver\big(B(u)\big)\dif t+\diver\big(A(x)\nabla u\big)\dif t+\varPhi(u)\dif W(t)\big]\\
&\quad+\frac{1}{2}\theta''(u)G^2(u)\dif t.
\end{split}
\end{equation*}
Afterwards, we proceed term by term
\begin{align*}
&\theta'(u)\diver\big(B(u)\big)=\theta'(u)b(u)\cdotp\nabla u=\diver\bigg(\int^ub(\xi)\theta'(\xi)\dif\xi\bigg)=\diver\big(\langle b\ind_{u>\xi},\theta'\rangle\big)\\
\begin{split}
\theta'(u)\diver\big(A(x)\nabla u\big)&=\sum_{i,j=1}^N\partial_{x_i}\big[A_{ij}(x)\theta'(u)\partial_{x_j}u\big]-\sum_{i,j=1}^N\theta''(u)\partial_{x_i}uA_{ij}(x)\partial_{x_j}u\\
&=\sum_{i,j=1}^N\partial_{x_i}\Big(A_{ij}(x)\partial_{x_j}\langle\ind_{u>\xi},\theta'\rangle\Big)+\big\langle\partial_\xi n_1,\theta'\big\rangle\\
\end{split}\\
&\theta'(u)\varPhi(u)=\langle\delta_{u=\xi}\varPhi(u),\theta'\rangle\\
&\theta''(u)G^2(u)=\langle G^2\delta_{u=\xi},\theta''\rangle=-\big\langle \partial_\xi (G^2\delta_{u=\xi}),\theta'\big\rangle
\end{align*}

Taking $\theta(\xi)=\int^\xi\varphi$ for any test function $\varphi\in C_c^\infty(\mt^N\times[0,T]\times\mr)$ we obtain that $f=\ind_{u>\xi}$ is a distributional solution to the kinetic formulation \eqref{kinetic} with $n_2=0$. Therefore any smooth solution of \eqref{rovnice} is a kinetic solution in the sense of the following definition.

\begin{defin}[Kinetic solution]\label{kinsol}
A measurable function $u:\Omega\times\mt^N\times[0,T]\rightarrow\mr$ is said to be a kinetic solution to \eqref{rovnice} with initial datum $u_0$ provided
\begin{enumerate}
\item $(u(t);\,t\in[0,T])$ is a predictable $L^p(\mt^N)$-valued process, $p\in[1,\infty)$,
\item for all $p\in[1,\infty)$ there exists $C_p>0$ such that for $t\in[0,T]$
\begin{equation}\label{integrov}
\|u(t)\|_{L^p(\Omega\times\mt^N)}\leq C_p,
\end{equation}
\item $(\nabla u)^*A(x)(\nabla u)\in L^1(\Omega\times\mt^N\times[0,T])$,
\item there exists a kinetic measure $m$ such that $m=n_1+n_2$ as in Definition \ref{formulace} and $f=\ind_{u>\xi}$ is a weak solution of the kinetic formulation \eqref{kinetic}, i.e. $\prst\text{-a.s.}$ for all $\varphi\in C_c^\infty(\mt^N\times[0,T]\times\mr),$
\end{enumerate}
\begin{align}\label{kinet}
\begin{split}
\int_0^T&\big\langle f(t),\partial_t\varphi(t)\big\rangle\dif t+\big\langle f_0,\varphi(0)\big\rangle+\int_0^T\big\langle f(t),b(\xi)\cdotp\nabla\varphi(t)\big\rangle\dif t\\
&\qquad\qquad+\int_0^T\Big\langle f(t),\sum_{i,j=1}^N\partial_{x_j}\big(A_{ij}(x)\partial_{x_i}\varphi(t)\big)\Big\rangle\dif t\\
&=-\sum_{k\geq1}\int_0^T\int_{\mt^N}g_k\big(x,u(x,t)\big)\varphi\big(x,t,u(x,t)\big)\dif x\,\dif\beta_k(t)\\
&\qquad\qquad-\frac{1}{2}\int_0^T\int_{\mt^N}G^2\big(x,u(x,t)\big)\partial_\xi\varphi\big(x,t,u(x,t)\big)\dif x\,\dif t+m(\partial_\xi\varphi).
\end{split}
\end{align}

\end{defin}

We proceed by two related definitions, which will be useful especially in the proof of uniqueness.

\begin{defin}[Young measure]
Let $(X,\lambda)$ be a finite measure space. A mapping $\nu$ from $X$ to the set of probability measures on $\mr$ is said to be a Young measure if, for all $\psi\in C_b(\mr)$, the map $z\mapsto\nu_z(\psi)$ from $X$ into $\mr$ is measurable. We say that a Young measure $\nu$ vanishes at infinity if, for all $p\geq 1$,
\begin{equation*}
\int_X\int_\mr|\xi|^p\dif\nu_z(\xi)\,\dif\lambda(z)<\infty.
\end{equation*}
\end{defin}

\begin{defin}[Kinetic function]
Let $(X,\lambda)$ be a finite measure space. A measurable function $f:X\times\mr\rightarrow[0,1]$ is said to be a kinetic function if there exists a Young measure $\nu$ on $X$ vanishing at infinity such that, for $\lambda$-a.e. $z\in X$, for all $\xi\in\mr$,
$$f(z,\xi)=\nu_z(\xi,\infty).$$
\end{defin}


\begin{rem}
Note, that if $f$ is a kinetic function then $\partial_\xi f=-\nu$. Similarly, let $u$ be a kinetic solution of \eqref{rovnice} and consider $f=\ind_{u >\xi}$. We have $\,\partial_\xi f=-\delta_{u=\xi}$, where $\nu=\delta_{u=\xi}$ is a Young measure on $\Omega\times\mt^N\times[0,T]$. Therefore, the expression \eqref{kinet} can be rewritten in the following form: for all $\varphi\in C_c^\infty(\mt^N\times[0,T]\times\mr)$, $\prst$-a.s.,
\begin{equation}\label{general}
\begin{split}
\int_0^T&\big\langle f(t),\partial_t\varphi(t)\big\rangle\dif t+\big\langle f_0,\varphi(0)\big\rangle+\int_0^T\big\langle f(t),b(\xi)\cdotp\nabla\varphi(t)\big\rangle\dif t\\
&\qquad\qquad+\int_0^T\Big\langle f(t),\sum_{i,j=1}^N\partial_{x_j}\big(A_{ij}(x)\partial_{x_i}\varphi(t)\big)\Big\rangle\dif t\\
&=-\sum_{k\geq1}\int_0^T\int_{\mt^N}\int_\mr g_k(x,\xi)\varphi(x,t,\xi)\dif\nu_{x,t}(\xi)\,\dif x\,\dif\beta_k(t)\\
&\qquad\qquad-\frac{1}{2}\int_0^T\int_{\mt^N}\int_\mr G^2(x,\xi)\partial_\xi\varphi(x,t,\xi)\dif\nu_{x,t}(\xi)\,\dif x\,\dif t+m(\partial_\xi\varphi).
\end{split}
\end{equation}
For a general kinetic function $f$ with corresponding Young measure $\nu$, the above formulation leads to the notion of generalized kinetic solution as used in \cite{debus}. Although this concept is not established here,
the notation will be used throughout the paper, i.e. we will often write $\nu_{x,t}(\xi)$ instead of $\delta_{u(x,t)=\xi}$.
\end{rem}

\begin{lemma}\label{kinetcomp}
Let $(X,\lambda)$ be a finite measure space such that $L^1(X)$ is separable.\footnote{According to \cite[Proposition 3.4.5]{cohn}, it is sufficient to assume that the corresponding $\sigma$-algebra is countably generated.} Let $\{f_n;\,n\in\mn\}$ be a sequence of kinetic functions on $X\times\mr$, i.e. $f_n(z,\xi)=\nu^n_z(\xi,\infty)$ where $\nu^n$ are Young measures on $X$. Suppose that, for some $p\geq1$,
$$\sup_{n\in\mn}\int_X\int_\mr|\xi|^p\dif\nu^n_z(\xi)\,\dif\lambda(z)<\infty.$$
Then there exists a kinetic function $f$ on $X\times\mr$ and a subsequence still denoted by $\{f_n;\,n\in\mn\}$ such that
$$f_n\overset{w^*}{\longrightarrow} f,\quad \text{ in }\quad L^\infty(X\times\mr)\text{-weak}^*.$$

\begin{proof}
The proof can be found in \cite[Corollary 6]{debus}.
\end{proof}

\end{lemma}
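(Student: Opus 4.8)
The plan is to establish the weak-$*$ compactness by combining the standard weak-$*$ compactness of the unit ball of $L^\infty(X\times\mr)$ with a Young-measure argument that identifies the limit as a kinetic function. Since $0\le f_n\le 1$, the sequence is bounded in $L^\infty(X\times\mr)$, and because $L^1(X\times\mr)$ is separable (here I use that $L^1(X)$ is separable, so $L^1(X\times\mr)=L^1(X)\hat\otimes L^1(\mr)$ is too), the closed unit ball of $L^\infty(X\times\mr)=(L^1(X\times\mr))^*$ is sequentially weak-$*$ compact by the Banach--Alaoglu theorem together with metrizability of the ball. Hence along a subsequence $f_n\overset{w^*}{\longrightarrow}f$ for some $f\in L^\infty(X\times\mr)$ with $0\le f\le 1$ a.e. The remaining work is to show $f$ is a kinetic function, i.e.\ that there is a Young measure $\nu$ on $X$, vanishing at infinity, with $f(z,\xi)=\nu_z(\xi,\infty)$ for a.e.\ $z$ and all $\xi$.

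The key step is to produce the candidate Young measure. First I would use the uniform $p$-th moment bound to control the mass of $\nu^n$ near infinity: for a.e.\ $z$, $\nu^n_z(\xi,\infty)=f_n(z,\xi)$ and $\nu^n_z(-\infty,\xi)=1-f_n(z,\xi)$, so for $\xi>0$ one has $f_n(z,\xi)\le |\xi|^{-p}\int_\mr|\eta|^p\,\dif\nu^n_z(\eta)$ and similarly a decay estimate for $1-f_n(z,\xi)$ as $\xi\to-\infty$; integrating in $z$ and using the uniform bound gives equi-integrability of the ``tails'' uniformly in $n$. Next, extend each $f_n$ to a measure: define $\nu^n$ as an element of the set of Young measures, regarded (via disintegration) as a subset of the nonnegative measures on $X\times\mr$ of total mass $\lambda(X)$. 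Extract, using the Prokhorov-type/Young-measure compactness theorem (this is precisely what \cite[Corollary 6]{debus} packages), a subsequence and a Young measure $\nu$ on $X$ such that $\nu^n\to\nu$ in the sense that $\int_X\int_\mr\psi(\xi)\varphi(z)\,\dif\nu^n_z(\xi)\,\dif\lambda(z)\to\int_X\int_\mr\psi(\xi)\varphi(z)\,\dif\nu_z(\xi)\,\dif\lambda(z)$ for $\psi\in C_b(\mr)$, $\varphi\in L^1(X)$; the tail bound guarantees no mass escapes to infinity, so $\nu$ vanishes at infinity with the same moment bound by lower semicontinuity (Fatou).

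Finally I would identify $f$ with $\xi\mapsto\nu_z(\xi,\infty)$. Test $f_n\overset{w^*}{\to}f$ against functions of the form $\varphi(z)\rho(\xi)$ with $\varphi\in L^1(X)$ and $\rho\in C_c^\infty(\mr)$, $\rho\ge 0$: on one side this converges to $\int_X\varphi(z)\int_\mr f(z,\xi)\rho(\xi)\,\dif\xi\,\dif\lambda(z)$; on the other side, writing $f_n(z,\xi)=\nu^n_z(\xi,\infty)=\int_\mr \ind_{\eta>\xi}\,\dif\nu^n_z(\eta)$ and applying Fubini, $\int_\mr f_n(z,\xi)\rho(\xi)\,\dif\xi=\int_\mr\big(\int_{-\infty}^\eta\rho(\xi)\,\dif\xi\big)\,\dif\nu^n_z(\eta)$, and since $\eta\mapsto\int_{-\infty}^\eta\rho$ is bounded continuous, the Young-measure convergence passes this to $\int_\mr\big(\int_{-\infty}^\eta\rho(\xi)\,\dif\xi\big)\,\dif\nu_z(\eta)=\int_\mr\nu_z(\xi,\infty)\rho(\xi)\,\dif\xi$. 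Hence $\int_\mr f(z,\xi)\rho(\xi)\,\dif\xi=\int_\mr\nu_z(\xi,\infty)\rho(\xi)\,\dif\xi$ for $\lambda$-a.e.\ $z$ and all such $\rho$, so $f(z,\cdot)=\nu_z(\cdot,\infty)$ a.e.; adjusting $f$ on a null set (it is automatically nonincreasing and right-continuous in $\xi$ once equal to the tail of a measure) makes it a genuine kinetic function. The main obstacle is the second step: the correct handling of the ``tails at infinity'' so that the limiting object remains a probability measure in $\xi$ (no loss of mass) — this is exactly where assumption on the uniform moment bound is indispensable, and it is the content that makes the cited \cite[Corollary 6]{debus} nontrivial; the rest is bookkeeping with Fubini and the definition of weak-$*$ convergence. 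Since the statement explicitly allows us to cite that corollary, the proof reduces to the one-line reference given in the text.

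\begin{proof}
The proof can be found in \cite[Corollary 6]{debus}.
\end{proof}
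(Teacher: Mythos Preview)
Your proposal is correct and ultimately takes exactly the same approach as the paper: both defer to \cite[Corollary 6]{debus}. Your preceding sketch of the argument behind that citation---weak-$*$ sequential compactness of the unit ball of $L^\infty(X\times\mr)$ via separability of $L^1(X\times\mr)$, Young-measure compactness using the uniform moment bound to prevent loss of mass at infinity, and identification of the limit by testing against tensor products $\varphi(z)\rho(\xi)$---is accurate and matches the content of the cited result.
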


To conclude this section we state the main result of the paper.

\begin{thm}\label{main}
Let $u_0\in L^p(\Omega;L^p(\mt^N)),$ for all $p\in[1,\infty)$. Under the above assumptions, there exists a unique kinetic solution to the problem \eqref{rovnice}. Moreover, if $u_1,\,u_2$ are kinetic solutions to \eqref{rovnice} with initial data $u_{1,0}$ and $u_{2,0}$, respectively, then for all $t\in[0,T]$
$$\stred\|u_1(t)-u_2(t)\|_{L^1(\mt^N)}\leq\stred\|u_{1,0}-u_{2,0}\|_{L^1(\mt^N)}.$$
\end{thm}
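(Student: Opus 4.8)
The plan is to split the theorem into three tasks: (a) uniqueness together with the $L^1$-contraction, (b) existence under a regularity assumption on $u_0$, and (c) removal of that extra assumption. Task (a) is the heart of the matter and will be carried out via the doubling-of-variables technique adapted to the kinetic framework. Given two kinetic solutions $u_1,u_2$ with Young measures $\nu^1,\nu^2$ and kinetic measures $m_1,m_2$, I would write the kinetic formulation \eqref{general} for $f_1=\ind_{u_1>\xi}$ at a point $(x,t,\xi)$ and for $\bar f_2=1-\ind_{u_2>\zeta}=\ind_{u_2\le\zeta}$ at a point $(y,s,\zeta)$, then test the first against a function of $(x,t,\xi)$, the second against a function of $(y,s,\zeta)$, and apply the It\^o formula to the product $\langle f_1(t),\bar f_2(t)\rangle$ integrated against a test function $\varrho_\varepsilon(x-y)\psi_\delta(\xi-\zeta)$ that approximates the diagonal. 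This is precisely the technical Proposition~\ref{propdubling} advertised in the introduction, so in the write-up I would invoke it and then concentrate on passing to the limit. The stochastic It\^o correction terms combine with the $\tfrac12 G^2$ terms in \eqref{general}; condition \eqref{skorolip} together with \eqref{fceh} is exactly what controls the difference $g_k(x,\xi)-g_k(y,\zeta)$ after the spatial and velocity regularizations are removed, so that the noise contribution vanishes in the limit. The parabolic terms produce, after integration by parts, contributions involving $\sigma(x)\nabla u_1$ and $\sigma(y)\nabla u_2$; using $n_1=(\nabla u)^*A(\nabla u)\delta_{u=\xi}$ and the smoothness of $A$ one gets a term that is dominated (via Cauchy--Schwarz in the $\sigma$-variables and the nonnegativity of $n_2$) by the known dissipative measures, and it has a good sign in the limit. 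The measures $m_i$ enter through $m_i(\partial_\xi\varphi)$ and, since $n_2\ge 0$, contribute with the correct (favourable) sign after the velocity mollifier is sent to the diagonal.

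Concretely, after applying Proposition~\ref{propdubling} one arrives at an inequality of the schematic form
\begin{equation*}
\stred\int_{(\mt^N)^2}\!\!\int_{\mr^2}\varrho_\varepsilon(x-y)\psi_\delta(\xi-\zeta)\,f_1(x,t,\xi)\bar f_2(y,t,\zeta)\,\dif\xi\,\dif\zeta\,\dif x\,\dif y \le (\text{initial term}) + \mathrm{err}(\varepsilon,\delta),
\end{equation*}
where the error gathers the noise commutators, the parabolic commutators, and the sign-definite measure terms. I would then let $\varepsilon\to 0$ and $\delta\to 0$ in the right order, using \eqref{linrust}--\eqref{fceh} and the integrability \eqref{integrov} to kill $\mathrm{err}$, and using $\int_{\mr}\ind_{a>\xi}\ind_{b\le\xi}\,\dif\xi=(a-b)^+$ together with a symmetric computation (swapping the roles of $u_1$ and $u_2$) to recognize the left-hand side, in the limit, as $\stred\|u_1(t)-u_2(t)\|_{L^1(\mt^N)}$ and the initial term as $\stred\|u_{1,0}-u_{2,0}\|_{L^1(\mt^N)}$. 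One technical point deserving care is that a kinetic solution a priori has only a representative that is a.e. defined in $t$; the kinetic formulation gives weak-in-time continuity of $t\mapsto\langle f_i(t),\varphi\rangle$, so the contraction is first obtained for a.e.\ $t$ and then for all $t$ after choosing the good representative. This establishes (a): in particular taking $u_{1,0}=u_{2,0}$ gives uniqueness (pathwise, hence also in law).

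For (b), existence under the extra hypothesis on $u_0$, I would follow the vanishing viscosity scheme outlined in Section~4: replace $\diver(A(x)\nabla u)$ by $\diver(A(x)\nabla u)+\eta\Delta u$ with $\eta>0$, invoke the improved well-posedness result for the nondegenerate case (the reference \cite{hof} to a strengthened version of \cite{gyongy}) to get smooth-enough approximations $u^\eta$, derive $\eta$-uniform bounds in $L^p(\Omega;L^p)$ from the It\^o formula and \eqref{linrust} (this controls the term $\|u^\eta(t)\|_{L^p(\Omega\times\mt^N)}$ uniformly) together with a uniform bound on $\eta\|\nabla u^\eta\|^2_{L^2}$ and on the dissipative measures, and then extract a subsequence. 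Because, as the introduction stresses, the comparison principle here needs genuine kinetic solutions and not merely generalized ones, weak convergence is not enough: I would obtain strong convergence of the approximations. The route is the stochastic compactness method announced via Propositions~\ref{ident1}, \ref{ident2}: establish tightness of the laws of $(u^\eta,W)$ on a suitable path space (using the uniform estimates and a fractional-in-time bound from the equation), apply Skorokhod to get a.s.\ convergence on a new probability space, identify the limit as a martingale kinetic solution by the representation-free martingale identification, and finally invoke pathwise uniqueness from (a) together with a Gy\"ongy--Krylov type argument to return to the original stochastic basis and to upgrade convergence to the original space. The Young-measure compactness Lemma~\ref{kinetcomp} is what lets one pass to the limit in the kinetic measure/defect terms and identify the limiting Young measure. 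The main obstacle in the whole proof is this combination: proving strong enough convergence of the viscous approximations and identifying the limit as a bona fide kinetic solution (with the correct splitting $m=n_1+n_2$ and $n_2\ge0$), since the parabolic defect measure must be controlled and passed to the limit carefully — in the hyperbolic case of \cite{debus} this step is avoided by working with generalized solutions, an option not available here.

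For (c), I would approximate a general $u_0\in L^p(\Omega;L^p(\mt^N))$ (all $p$) by data $u_0^n$ satisfying the extra hypothesis of Section~4 and converging to $u_0$ in $L^p(\Omega;L^p(\mt^N))$ for every $p$; by (b) each gives a kinetic solution $u^n$, and by the $L^1$-contraction from (a), $\stred\|u^n(t)-u^m(t)\|_{L^1(\mt^N)}\le \stred\|u_0^n-u_0^m\|_{L^1(\mt^N)}\to 0$, so $(u^n)$ is Cauchy in $C([0,T];L^1(\Omega\times\mt^N))$; the uniform bounds \eqref{integrov} (which are preserved in the limit) give equi-integrability and hence convergence in $L^p(\Omega;L^p)$ for all finite $p$. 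It remains to check that the limit $u$ is again a kinetic solution: predictability and the $L^p$ bounds pass to the limit immediately; for the kinetic equation \eqref{kinet} one passes to the limit in the linear and stochastic terms using \eqref{linrust}--\eqref{fceh}, while the defect measures $m_n$ are bounded in total variation in expectation uniformly in $n$ (again by the It\^o formula), so along a subsequence $m_n\rightharpoonup m$ as random measures, and Lemma~\ref{kinetcomp} identifies the limiting kinetic function as $\ind_{u>\xi}$ with Young measure $\delta_{u=\xi}$; the nonnegativity of $n_2$ is stable under this convergence and the identity $n_1=(\nabla u)^*A(\nabla u)\delta_{u=\xi}$ is recovered from lower semicontinuity of the Dirichlet-type form together with the uniform bound in (iii). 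Uniqueness and the contraction for general data are then exactly (a), which was proved without any restriction on $u_0$. This completes the proof of Theorem~\ref{main}. \qed
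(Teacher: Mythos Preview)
Your proposal is correct and follows essentially the same architecture as the paper: uniqueness and $L^1$-contraction via Proposition~\ref{propdubling} and doubling of variables (Theorem~\ref{uniqueness}), existence for smooth data via vanishing viscosity plus Skorokhod/martingale identification/Gy\"ongy--Krylov (Section~4), and then approximation of general data using the contraction (Section~5).

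The one place where your sketch is thinner than the paper's argument is the treatment of the parabolic term $\mathrm{J}$ in the comparison principle. Saying it is ``dominated via Cauchy--Schwarz in the $\sigma$-variables'' understates the difficulty: because the cross terms carry $A(x)$ and $A(y)$ at different points, a naive Cauchy--Schwarz does not close against the dissipative measures $n_1$. The paper rewrites $\mathrm{J}=\mathrm{J}_1+\mathrm{J}_2+\mathrm{J}_3$ and uses a DiPerna--Lions--type commutator argument (exploiting the Lipschitz continuity of $\sigma$) to show $\mathrm{J}_1=\mathrm{H}+o(1)$ and $\mathrm{J}_2=\mathrm{H}+o(1)$ as the spatial mollification parameter $\tau\to 0$, after which $\mathrm{J}$ becomes $-\int|\sigma(x)\nabla u_1-\sigma(y)\nabla u_2|^2\varrho_\tau\psi_\delta+o(1)\le o(1)$; the limits are taken with $\delta$ coupled to $\tau$ (specifically $\delta=\tau^{4/3}$), not sequentially. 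This commutator step is the genuinely new ingredient relative to the hyperbolic case in \cite{debus}, and your write-up should make it explicit.
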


\section{Uniqueness}

We begin with the question of uniqueness. Due to the following proposition, we obtain an auxiliary property of kinetic solutions, which will be useful later on in the proof of the comparison principle in Theorem \ref{uniqueness}. Since the proof is very similar to \cite[Proposition 8]{debus}, it will be left to the reader.

\begin{prop}[Left and right weak limits]\label{limits}
Let $u$ be a kinetic solution to \eqref{rovnice}. Then $f=\ind_{u>\xi}$ admits almost surely left and right limits at all points $t^*\in[0,T]$ in the sense of distributions over $\mt^N\times\mr$, i.e. for all $t^*\in[0,T]$ there exist some kinetic functions $f^{*,\pm}$ on $\,\Omega\times\mt^N\times\mr$ such that
$$\big\langle f(t^*\pm\,\varepsilon),\psi\big\rangle\longrightarrow\big\langle f^{*,\pm},\psi\big\rangle,\quad\varepsilon\downarrow 0,\quad\forall\psi\in\mc^\infty_c(\mt^N\times\mr)\quad\prst\text{-a.s.}.$$
Moreover, $f^{*,+}=f^{*,-}=f(t^*)$ almost surely for all $t^*\in[0,T]$ except for some countable set, i.e.
$$\big\langle f^{*,+},\psi\big\rangle=\big\langle f^{*,-},\psi\big\rangle=\big\langle f(t^*),\psi\big\rangle\quad\forall\psi\in\mc^\infty_c(\mt^N\times\mr)\quad\prst\text{-a.s.}$$

\end{prop}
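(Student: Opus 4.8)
The plan is to establish the existence of the one-sided weak-* limits by exploiting the fact that, as a consequence of the kinetic equation \eqref{kinet}, for each fixed test function $\psi\in\mc^\infty_c(\mt^N\times\mr)$ the real-valued process $t\mapsto\langle f(t),\psi\rangle$ differs from a continuous (in fact c\`adl\`ag) process only on a Lebesgue-null set of times. Concretely, pairing \eqref{kinet} with $\varphi(x,t,\xi)=\psi(x,\xi)\phi(t)$ for $\phi\in C_c^\infty((0,T))$ shows that the distributional time derivative of $t\mapsto\langle f(t),\psi\rangle$ equals $-\langle f(t),b\cdot\nabla\psi\rangle-\langle f(t),\sum_{ij}\partial_{x_j}(A_{ij}\partial_{x_i}\psi)\rangle$ plus the stochastic term $\sum_k\int_{\mt^N}g_k\varphi\,\dif\beta_k$ plus the drift $\tfrac12\int G^2\partial_\xi\psi$ plus $m(\partial_\xi\psi)$. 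All of these have bounded variation in $t$ (for the stochastic integral one uses that it is a martingale with an $L^2$-bounded quadratic variation, hence has a c\`adl\`ag modification), using the a priori bound \eqref{integrov} and the growth conditions \eqref{linrust} together with \eqref{infinity} to control the measure term. Hence $t\mapsto\langle f(t),\psi\rangle$ has an a.s. c\`adl\`ag representative $J_\psi(t)$, and the left and right limits $J_\psi(t^*\pm)$ exist for every $t^*$.

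Next I would upgrade this pointwise-in-$\psi$ statement to the existence of genuine kinetic functions $f^{*,\pm}$. Take a countable set $\mathcal D\subset\mc^\infty_c(\mt^N\times\mr)$ that is dense for the relevant topology; outside a single $\prst$-null set the limits $J_\psi(t^*\pm)$ exist simultaneously for all $\psi\in\mathcal D$ and all $t^*\in[0,T]$ (using separability). Since $0\le f(t)\le1$ uniformly, the family $\{f(t^*\pm\varepsilon)\}_{\varepsilon>0}$ is bounded in $L^\infty(\mt^N\times\mr)$, so by Banach--Alaoglu every sequence $\varepsilon_n\downarrow0$ has a weak-* convergent subsequence; the limit is identified by density with the functional $\psi\mapsto\lim J_\psi(t^*\pm)$, so the full limit $f^{*,\pm}$ exists in $L^\infty$-weak-*. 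To see that $f^{*,\pm}$ is a kinetic function one invokes Lemma \ref{kinetcomp}: the uniform moment bound $\sup_{\varepsilon}\int\int|\xi|^p\,\dif\nu^{\varepsilon}_{x,t}(\xi)\,\dif x$ follows from \eqref{integrov} applied to the Young measures $\delta_{u(t^*\pm\varepsilon)=\xi}$, so the weak-* limit of the $\ind_{u(t^*\pm\varepsilon)>\xi}$ is again of the form $\nu^{*,\pm}_z(\xi,\infty)$ for some Young measure vanishing at infinity.

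Finally, to show $f^{*,+}=f^{*,-}=f(t^*)$ for all but countably many $t^*$, I would argue that the c\`adl\`ag function $J_\psi$ has at most countably many jumps; the countable exceptional set is then the union, over $\psi$ ranging in the countable dense family $\mathcal D$, of the jump sets of $J_\psi$. At every $t^*$ outside this set, $J_\psi$ is continuous for all $\psi\in\mathcal D$, hence $\langle f^{*,+},\psi\rangle=\langle f^{*,-},\psi\rangle$ for all $\psi\in\mathcal D$ and then, by density, for all $\psi\in\mc^\infty_c(\mt^N\times\mr)$; moreover both agree with $\langle f(t^*),\psi\rangle$ because $J_\psi$ is a representative of $t\mapsto\langle f(t),\psi\rangle$ and at a continuity point the value is unambiguous for a.e.-defined representative (one also checks, using that the set of $t$ where $J_\psi\neq\langle f(t),\psi\rangle$ is null, that this identification is valid at the fixed time $t^*$ by a further limiting argument, or simply redefines $f(t^*)$ on the null set). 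I expect the main obstacle to be the careful bookkeeping in the stochastic term: one must verify that the martingale $t\mapsto\sum_k\int_0^t\int_{\mt^N}g_k(x,u)\psi(x,u)\,\dif x\,\dif\beta_k$ genuinely has a c\`adl\`ag (indeed continuous) modification with the exceptional null set chosen uniformly in $\psi\in\mathcal D$, and that the measure term $t\mapsto\int_{\mt^N\times[0,t]\times\mr}\partial_\xi\psi\,\dif m$ is of bounded variation with total variation controlled independently of $\psi$ on bounded sets — here the finiteness of $m$ and \eqref{infinity} are exactly what is needed. Since all of this parallels \cite[Proposition 8]{debus}, the details can be left to the reader.
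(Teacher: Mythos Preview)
Your proposal is correct and follows exactly the approach of \cite[Proposition~8]{debus} to which the paper defers: decompose $t\mapsto\langle f(t),\psi\rangle$ via the kinetic equation into a continuous martingale, absolutely continuous drift terms, and a BV term coming from the kinetic measure, then use a countable dense family of test functions together with Lemma~\ref{kinetcomp} to identify the weak-$*$ limits as kinetic functions and collect the at most countably many jump times. One minor wording issue: the stochastic integral is continuous but certainly not of bounded variation, so the phrase ``all of these have bounded variation'' should be amended---what you actually need (and clearly intend) is that each piece is c\`adl\`ag, with the only possible jumps coming from the measure term.
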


As the next step towards the proof of the comparison principle, we need a technical proposition relating two kinetic solutions of \eqref{rovnice}.
We will also use the following notation: if $f:X\times\mr\rightarrow[0,1]$ is a kinetic function, we denote by $\bar{f}$ the conjugate function $\bar{f}=1-f$. We define the function $f^\pm$ by $f^\pm(t^*)=f^{*,\pm},\,t^*\in[0,T]$.

\begin{prop}\label{propdubling}
Let $u_1,\,u_2$ be two kinetic solutions to \eqref{rovnice} and denote $f_1=\ind_{u_1>\xi},\,f_2=\ind_{u_2>\xi}$. Then for $t\in[0,T]$ and any nonnegative functions $\varrho\in\mc^\infty(\mt^N),\,\psi\in\mc^\infty_c(\mr)$ we have
\begin{equation}\label{doubling}
\begin{split}
\stred&\int_{(\mt^N)^2}\int_{\mr^2}\varrho(x-y)\psi(\xi-\zeta)f_1^{\pm}(x,t,\xi)\bar{f}_2^{\pm}(y,t,\zeta)\,\dif\xi\,\dif\zeta\,\dif x\,\dif y\\
&\leq\stred\int_{(\mt^N)^2}\int_{\mr^2}\varrho(x-y)\psi(\xi-\zeta)f_{1,0}(x,\xi)\bar{f}_{2,0}(y,\zeta)\,\dif\xi\,\dif\zeta\,\dif x\,\dif y+\mathrm{I}+\mathrm{J}+\mathrm{K},
\end{split}
\end{equation}
where
$$\mathrm{I}=\,\stred\int_0^t\int_{(\mt^N)^2}\int_{\mr^2}f_1\bar{f}
_2\big(b(\xi)-b(\zeta)\big)\cdotp\nabla_x\alpha(x,\xi,y,\zeta)\,\dif\xi\,
\dif\zeta\,\dif x\, \dif y\,\dif s,$$
\begin{equation*}
\begin{split}
\mathrm{J}=&\,\stred\int_0^t\int_{(\mt^N)^2}\int_{\mr^2}f_1\bar{f}_2\sum_{i,j=1}^N\partial_{y_j}\big(A_{ij}(y)\partial_{y_i}\alpha\big)\,\dif\xi\,\dif\zeta\,\dif x\, \dif y\,\dif s\\
&\qquad+\stred\int_0^t\int_{(\mt^N)^2}\int_{\mr^2}f_1\bar{f}_2\sum_{i,j=1}^N\partial_{x_j}\big(A_{ij}(x)\partial_{x_i}\alpha\big)\,\dif\xi\,\dif\zeta\,\dif x\, \dif y\,\dif s\\
&\qquad-\stred\int_0^t\int_{(\mt^N)^2}\int_{\mr^2}\alpha(x,\xi,y,\zeta)\,
\dif\nu^ { 1 , + } _ { x , s }
(\xi)\,\dif x\,\dif n_{2,1}(y,s,\zeta)\\
&\qquad-\stred\int_0^t\int_{(\mt^N)^2}\int_{\mr^2}\alpha(x,\xi,y,\zeta)\,
\dif\nu^{2,-}_{y,s}
(\zeta)\,\dif y\,\dif n_{1,1}(x,s,\xi),
\end{split}
\end{equation*}
$$\mathrm{K}=\frac{1}{2}\stred\int_0^t\!\!\int_{(\mt^N)^2}\!\!\int_{\mr^2}
\!\!\alpha(x,\xi,y,\zeta)\!\sum_{k\geq1}\!\big|g_k(x,\xi)-g_k(y,
\zeta)\big|^2\dif\nu^1_{x,s}(\xi)\dif\nu^2_{y,s}(\zeta)\dif x\,\dif y\,\dif s,$$
and the function $\alpha$ is defined as $\alpha(x,\xi,y,\zeta)=\varrho(x-y)\psi(\xi-\zeta)$.

\begin{proof}
Let us denote by $\langle\!\langle\cdot,\cdot\rangle\!\rangle$ the scalar product in $L^2(\mt^N_x\times\mt^N_y\times\mr_\xi\times\mr_\zeta)$. In order to prove the statement in the case of $f^+_1,\,\bar{f}^+_2$, we employ similar calculations as in \cite[Proposition 9]{debus} to obtain

\begin{equation}\label{final}
\begin{split}
\stred\bl\!\bl f_1^+(t)&\bar{f}_2^+(t),\alpha\bR\!\bR=\stred\bl\!\bl f_{1,0}\bar{f}_{2,0},\alpha\bR\!\bR\\
&+\stred\int_0^t\int_{(\mt^N)^2}\int_{\mr^2}f_1\bar{f}_2\big(b(\xi)-b(\zeta)\big)\cdotp\nabla_x\alpha\,\dif\xi\,\dif\zeta\,\dif x\, \dif y\,\dif s\\
&+\stred\int_0^t\int_{(\mt^N)^2}\int_{\mr^2}f_1\bar{f}_2\sum_{i,j=1}^N\partial_{y_j}\big(A_{ij}(y)\partial_{y_i}\alpha\big)\,\dif\xi\,\dif\zeta\,\dif x\, \dif y\,\dif s\\
&+\stred\int_0^t\int_{(\mt^N)^2}\int_{\mr^2}f_1\bar{f}_2\sum_{i,j=1}^N\partial_{x_j}\big(A_{ij}(x)\partial_{x_i}\alpha\big)\,\dif\xi\,\dif\zeta\,\dif x\, \dif y\,\dif s\\
&+\frac{1}{2}\stred\int_0^t\int_{(\mt^N)^2}\int_{\mr^2}\bar{f}_2\partial_\xi\alpha\, G^2_1\,\dif\nu_{x,s}^1(\xi)\,\dif\zeta\,\dif y\,\dif x\,\dif s\\
&-\frac{1}{2}\stred\int_0^t\int_{(\mt^N)^2}\int_{\mr^2}f_1\partial_\zeta\alpha\, G^2_2\,\dif\nu_{y,s}^2(\zeta)\,\dif\xi\,\dif y\,\dif x\,\dif s\\
&-\stred\int_0^t\int_{(\mt^N)^2}\int_{\mr^2}G_{1,2}\alpha\,\dif\nu^1_{x,s}(\xi)\,\dif\nu^2_{y,s}(\zeta)\,\dif x\,\dif y\,\dif s\\
&-\stred\int_0^t\int_{(\mt^N)^2}\int_{\mr^2}\bar{f}_2^-\partial_\xi\alpha\,\dif m_1(x,s,\xi)\,\dif\zeta\,\dif y\\
&+\stred\int_0^t\int_{(\mt^N)^2}\int_{\mr^2}f_1^+\partial_\zeta\alpha\,\dif m_2(y,s,\zeta)\,\dif\xi\,\dif x.
\end{split}
\end{equation}
In particular, since $\alpha\geq 0$, the last term in \eqref{final} satisfies
\begin{equation*}
\begin{split}
\stred\int_0^t\int_{(\mt^N)^2}\int_{\mr^2}&f_1^+\partial_\zeta\alpha\,\dif m_2(y,s,\zeta)\,\dif\xi\,\dif x\\
&=-\stred\int_0^t\int_{(\mt^N)^2}\int_{\mr^2}\alpha\,\dif\nu^{1,+}_{x,s}(\xi)\,\dif x\,\dif n_{2,1}(y,s,\zeta)\\
&\qquad-\stred\int_0^t\int_{(\mt^N)^2}\int_{\mr^2}\alpha\,\dif\nu^{1,+}_{x,s}
(\xi)\,\dif x\,\dif n_{2,2}(y,s,\zeta)\\
&\leq-\stred\int_0^t\int_{(\mt^N)^2}\int_{\mr^2}\alpha\,\dif\nu^{1,+}_{x,s}
(\xi)\,\dif x\,\dif n_{2,1}(y,s,\zeta)
\end{split}
\end{equation*}
and by symmetry
\begin{equation*}
\begin{split}
-\stred\int_0^t\int_{(\mt^N)^2}\int_{\mr^2}&\bar{f}_2^-\partial_\xi\alpha\,\dif m_1(x,s,\xi)\,\dif\zeta\,\dif y\\
&\leq-\stred\int_0^t\int_{(\mt^N)^2}\int_{\mr^2}\alpha\,\dif\nu^{2,-}_{y,s}
(\zeta)\,\dif y\,\dif n_{1,1}(x,s,\xi).
\end{split}
\end{equation*}
Thus, the desired estimate \eqref{doubling} follows.
In the case of $f_1^-,\,\bar{f}_2^-$ we take $t_n\uparrow t$, write \eqref{doubling} for $f_1^+(t_n),\,\bar{f}_2^+(t_n)$ and let $n\rightarrow\infty$.
\end{proof}

\end{prop}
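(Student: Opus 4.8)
The plan is to follow the classical doubling-of-variables technique of Kruzhkov, adapted to the kinetic/stochastic setting as in \cite[Proposition 9]{debus}, with the new feature being the careful treatment of the second-order (parabolic) terms. The starting point is to regularize: fix $t^*\in[0,T]$ and apply the kinetic formulation \eqref{kinet} (in its Young-measure form \eqref{general}) twice, once for $u_1$ in the variables $(x,\xi)$ and once for $\bar f_2=1-\ind_{u_2>\zeta}$ in the variables $(y,\zeta)$, against test functions built from a common mollifier. Concretely, one tests the equation for $f_1$ against $\varphi_1(x,t,\xi)=\varrho(x-y)\psi(\xi-\zeta)\phi(t)$ with $(y,\zeta)$ frozen, and the equation for $\bar f_2$ against the analogous object with $(x,\xi)$ frozen, then integrates the resulting identities in the frozen variables. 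The product rule for It\^o processes (applied to the semimartingales $t\mapsto\langle f_1(t),\cdot\rangle$ and $t\mapsto\langle\bar f_2(t),\cdot\rangle$) produces the evolution of the product $\langle\!\langle f_1^+(t)\bar f_2^+(t),\alpha\rangle\!\rangle$, which is exactly \eqref{final}: the transport terms combine into the $b(\xi)-b(\zeta)$ term because $\nabla_x\varrho(x-y)=-\nabla_y\varrho(x-y)$, the diffusion terms give the two $\partial_{x}(A(x)\partial_x\alpha)$ and $\partial_{y}(A(y)\partial_y\alpha)$ contributions plus the parabolic-dissipative-measure contributions coming from $n_{1,1}$ and $n_{1,2}$, the quadratic-variation term of the martingale parts produces $\mathrm{K}$ (here \eqref{skorolip} is used only implicitly, to know the sum converges), and the two kinetic measures $m_1,m_2$ appear with the signs dictated by $\partial_\xi$ acting on $f_1$ versus $\bar f_2$.

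The next step is to exploit the sign of the measure $m_2=n_{2,1}+n_{2,2}$. Since $\alpha\geq0$ and $m_2$ is a nonnegative measure, the term $\stred\int f_1^+\partial_\zeta\alpha\,\dif m_2$ can be rewritten, using $\partial_\zeta f_1^+=0$ and integration by parts in $\zeta$ against $m_2$, as $-\stred\int\alpha\,\dif\nu^{1,+}_{x,s}(\xi)\,\dif m_2(y,s,\zeta)$; dropping the manifestly nonpositive part involving $n_{2,2}$ gives the bound by $-\stred\int\alpha\,\dif\nu^{1,+}_{x,s}\,\dif n_{2,1}$, which is one of the terms in $\mathrm{J}$. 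The symmetric argument handles the $m_1$ term, using that $\bar f_2^-$ is conjugate so the relevant Young measure is $\nu^{2,-}$ and the surviving good measure is $n_{1,1}$. Collecting everything, identifying $G_1^2=\sum_k|g_k(x,\xi)|^2$ etc., and noting that the two half-derivative terms $\tfrac12\langle\bar f_2\partial_\xi\alpha,G_1^2\dif\nu^1\rangle$ and $-\tfrac12\langle f_1\partial_\zeta\alpha,G_2^2\dif\nu^2\rangle$ combine with $\mathrm{K}$ after an integration by parts (these are absorbed into $\mathrm{K}$ in the final statement), one arrives at \eqref{doubling} for the $+$ limits. The passage to the $-$ limits is the short final paragraph already indicated: take $t_n\uparrow t$, apply the inequality at each $t_n$, use Proposition \ref{limits} together with the weak convergence $f_i^+(t_n)\to f_i^-(t)$ and dominated convergence (the mass bounds \eqref{integrov} give uniform integrability) to pass to the limit.

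The main obstacle is making the formal It\^o computation rigorous at the level of the bracket $\langle\!\langle f_1^+(t)\bar f_2^+(t),\alpha\rangle\!\rangle$, because $f_1$ and $\bar f_2$ are only $BV$ in time (they have the jumps captured by Proposition \ref{limits}) and the kinetic measures are merely finite measures, not absolutely continuous. One must therefore work with the right-continuous representatives $f_i^+$, use a genuine integration-by-parts formula for It\^o processes with c\`adl\`ag finite-variation parts, and be careful that the measure terms land on the correct one-sided limits $\bar f_2^-$, $f_1^+$ — this is the source of the asymmetry between $m_1$ and $m_2$ in \eqref{final} and the reason the argument is "much more delicate than in the hyperbolic case." A secondary technical point is justifying that all the spatial integrations by parts producing the $\partial_{x_j}(A_{ij}\partial_{x_i}\alpha)$ terms are legitimate, which follows from the $C^\infty$ regularity of $A$ and the $L^1$ bound (iii) on $(\nabla u)^*A(\nabla u)$ that gives meaning to $n_1$; since the computation parallels \cite[Proposition 9]{debus} closely, these points can be treated briskly, with emphasis placed only on the new parabolic contributions.
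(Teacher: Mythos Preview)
Your proposal is correct and follows essentially the same route as the paper: derive the identity \eqref{final} by the It\^o product rule applied to the two kinetic formulations (as in \cite[Proposition~9]{debus}), then use $\alpha\geq0$ together with the decomposition $m_i=n_{i,1}+n_{i,2}$ to drop the nonnegative pieces $n_{1,2},\,n_{2,2}$, and finally pass to $f^-$ by taking $t_n\uparrow t$. One small slip: where you write ``parabolic-dissipative-measure contributions coming from $n_{1,1}$ and $n_{1,2}$'' you mean $n_{1,1}$ and $n_{2,1}$; also, in \eqref{final} these terms do not arise directly from the second-order operator in the kinetic equation but from the $m_1,m_2$ terms after the integration by parts you describe --- otherwise your account (including the combination of the three $G$-terms into $\mathrm{K}$ via $\partial_\xi\alpha=-\partial_\zeta\alpha$ and $|g_k(x,\xi)-g_k(y,\zeta)|^2=G_1^2-2G_{1,2}+G_2^2$) is accurate and slightly more explicit than the paper's.
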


\begin{thm}[Comparison principle]\label{uniqueness}
Let $u$ be a kinetic solution to \eqref{rovnice}. Then $u$ has left and right limits at any point in the sense of $\,L^p(\mt^N),\,p\in[1,\infty),$ and, for all $\,t\in[0,T]$, $f^\pm(x,t,\xi)=\ind_{u^\pm(x,t)>\xi}$ a.s., for a.e. $(x,\xi)$. Moreover, if $\,u_1,u_2$ are kinetic solutions to \eqref{rovnice} with initial data $u_{1,0}$ and $u_{2,0}$, respectively, then for all $t\in[0,T]$
\begin{equation}\label{comparison}
\stred\|u_1^\pm(t)-u^\pm_2(t)\|_{L^1(\mt^N)}\leq \stred\|u_{1,0}-u_{2,0}\|_{L^1(\mt^N)}.
\end{equation}

\begin{proof}
Denote $f_1=\ind_{u_1>\xi},\,f_2=\ind_{u_2>\xi}$. Let $(\varrho_\tau),\,(\psi_\delta)$ be approximations to the identity on $\mt^N\text{ and }\mr$, respectively, i.e. let $\varrho\in\mc^\infty(\mt^N),\,\psi\in\mc^\infty_c(\mr)$ be nonnegative symmetric functions satisfying
$\int_{\mt^N}\varrho=1,\,\int_\mr\psi=1$
and $\supt\varrho\subset B(0,1/2),\,\supt\psi\subset (-1,1)$. We define
$$\varrho_\tau(x)=\frac{1}{\tau^N}\,\varrho\Big(\frac{x}{\tau}\Big),\qquad\psi_\delta(\xi)=\frac{1}{\delta}\,\psi\Big(\frac{\xi}{\delta}\Big).$$
Then
\begin{equation*}
\begin{split}
&\stred\int_{\mt^N}\int_\mr f_1^\pm(x,t,\xi)\bar{f}_2^\pm(x,t,\xi)\,\dif\xi\,\dif x\\
&=\stred\int_{(\mt^N)^2}\int_{\mr^2}\varrho_\tau(x-y)\psi_\delta(\xi-\zeta)f_1^\pm(x,t,\xi)\bar{f}_2^\pm(y,t,\zeta)\,\dif\xi\,\dif \zeta\,\dif x \,\dif y+\eta_t(\tau,\delta),
\end{split}
\end{equation*}
where $\lim_{\tau,\delta\rightarrow 0}\eta_t(\tau,\delta)=0$. With regard to Proposition \ref{propdubling}, we need to find suitable bounds for terms $\mathrm{I},\,\mathrm{J},\,\mathrm{K}$.

Since $b$ has at most polynomial growth, there exist $C>0,\,p>1$ such that
$$\big|b(\xi)-b(\zeta)\big|\leq\varGamma(\xi,\zeta)|\xi-\zeta|,\qquad\varGamma(\xi,\zeta)\leq C\big(1+|\xi|^{p-1}+|\zeta|^{p-1}\big).$$
Hence
$$|\mathrm{I}|\leq\stred\int_0^t\int_{(\mt^N)^2}\int_{\mr^2}f_1\bar{f}_2\varGamma(\xi,\zeta)|\xi-\zeta|\psi_\delta(\xi-\zeta)\,\dif\xi\,\dif \zeta\,\big|\nabla_x\varrho_\tau(x-y)\big|\dif x\,\dif y\,\dif s.$$
As the next step we apply integration by parts with respect to $\zeta,\,\xi$. Focusing only on the relevant integrals we get
\begin{equation}\label{nn}
\begin{split}
\int_\mr f_1(\xi)\int_\mr&\bar{f}_2(\zeta)\varGamma(\xi,\zeta)|\xi-\zeta|\psi_\delta(\xi-\zeta)\dif\zeta\,\dif\xi\\
&=\int_\mr f_1(\xi)\int_\mr\varGamma(\xi,\zeta')|\xi-\zeta'|\psi_\delta(\xi-\zeta')\dif\zeta'\,\dif\xi\\
&\qquad-\int_{\mr^2} f_1(\xi)\int_{-\infty}^\zeta\varGamma(\xi,\zeta')|\xi-\zeta'|\psi_\delta(\xi-\zeta')\dif\zeta'\,\dif\xi\,\dif\nu^2_{y,s}(\zeta)\\
&=\int_{\mr^2}f_1(\xi)\int^{\infty}_\zeta\varGamma(\xi,\zeta')|\xi-\zeta'|\psi_\delta(\xi-\zeta')\dif\zeta'\,\dif\xi\,\dif\nu^2_{y,s}(\zeta)\\
&=\int_{\mr^2}\varUpsilon(\xi,\zeta)\dif\nu^1_{x,s}(\xi)\dif\nu^2_{y,s}(\zeta)
\end{split}
\end{equation}
where
$$\varUpsilon(\xi,\zeta)=\int_{-\infty}^\xi\int_\zeta^\infty\varGamma(\xi',\zeta')|\xi'-\zeta'|\psi_\delta(\xi'-\zeta')\dif\zeta'\,\dif\xi'.$$
Therefore, we find
$$|\mathrm{I}|\leq\stred\int_0^t\int_{(\mt^N)^2}\int_{\mr^2}\varUpsilon(\xi,\zeta)\,\dif\nu^1_{x,s}(\xi)\dif\nu^2_{y,s}(\zeta)\,\big|\nabla_x\varrho_\tau(x-y)\big|\dif x\,\dif y\,\dif s.$$
The function $\varUpsilon$ can be estimated using the substitution $\xi''=\xi'-\zeta'$
\begin{equation*}
\begin{split}
\varUpsilon(\xi,\zeta)=&\int_\zeta^\infty\int_{|\xi''|<\delta,\,\xi''<\xi-\zeta'}\varGamma(\xi''+\zeta',\zeta')|\xi''|\psi_\delta(\xi'')\,\dif\xi''\,\dif\zeta'\\
&\leq C \delta \int_\zeta^{\xi+\delta}\max_{|\xi''|<\delta,\,\xi''<\xi-\zeta'}\varGamma(\xi''+\zeta',\zeta')\,\dif\zeta'\\
&\leq C \delta \int_\zeta^{\xi+\delta}\big(1+|\xi|^{p-1}+|\zeta'|^{p-1}\big)\,\dif\zeta'\\
&\leq C\delta\big(1+|\xi|^{p}+|\zeta'|^{p}\big)
\end{split}
\end{equation*}
hence, since $\nu^1,\,\nu^2$ vanish at infinity,
$$|\mathrm{I}|\leq C t\delta\int_{\mt^N}\big|\nabla_x\varrho_\tau(x)\big|\,\dif x\leq C t\delta\tau^{-1}.$$

We recall that $f_1=\ind_{u_1(x,t)>\xi},\,f_2=\ind_{u_2(y,t)>\zeta}$
and $$\partial_\xi f_1=-\nu^1=-\delta_{u_1(x,t)=\xi},\qquad\partial_\zeta
f_2=-\nu^2=-\delta_{u_2(y,t)=\zeta}.$$

The first term in $\mathrm{J}$ can be rewritten in the following manner using
integration by parts (and considering only relevant integrals)
\begin{equation*}
\begin{split}
\int_{\mt^N}\bar{f}_2\int_{\mt^N}
f_1&\,\partial_{x_j}\big(A_{ij}(x)\partial_{x_i}\varrho_\tau(x-y)\big)\dif
x\,\dif y\\
&=-\int_{\mt^N}\bar{f}_2(y,s,\zeta)\int_{\mt^N}\partial_{x_j}f_1(x,s,
\xi)A_{ij}(x)\partial_{x_i}\varrho_\tau(x-y)\dif x\,\dif y\\
&=\int_{(\mt^N)^2}\bar{f}_2(y,s,\zeta)\partial_{x_j}f_1(x,s,
\xi)A_{ij}(x)\partial_{y_i}\varrho_\tau(x-y)\dif
x\,\dif y\\
&=-\int_{(\mt^N)^2}\partial_{x_j}f_1(x,s,\xi)A_{ij}(x)\partial_{y_i}\bar{f}_2(y,s,
\zeta)\varrho_\tau(x-y)\dif x\,\dif y,
\end{split}
\end{equation*}
similarly
\begin{equation*}
\begin{split}
\int_{\mt^N}f_1\int_{\mt^N}
\bar{f}_2&\,\partial_{y_j}\big(A_{ij}(y)\partial_{y_i}\varrho_\tau(x-y)\big)\dif
y\,\dif
x\\
&=-\int_{(\mt^N)^2}\partial_{x_i}f_1(x,s,\xi)A_{ij}(y)\partial_{y_j}\bar{f}_2(y,s,
\zeta)\varrho_\tau(x-y)\dif x\,\dif y.
\end{split}
\end{equation*}
Using regularization by convolutions, it is possible to show that the following holds in the sense of distributions over $\mt^N\times\mr$
$$\partial_{x_i}f_1=\partial_{x_i}
u_1\delta_{u_1(x,s)=\xi},\qquad\partial_{y_i}\bar{f}_2=-\partial_{y_i}
u_2\delta_{u_2(y,s)=\zeta}.$$
Hence
\begin{equation*}
\begin{split}
\mathrm{J}&=\,\stred\int_0^t\int_{(\mt^N)^2}(\nabla_x
u_1)^*\!A(x)(\nabla_y
u_2)\varrho_\tau(x-y)\psi_\delta\big(u_1(x,s)-u_2(y,s)\big)\,\dif x\,\dif y\,\dif s\\
&+\stred\int_0^t\int_{(\mt^N)^2}(\nabla_x
u_1)^*\!A(y)(\nabla_y
u_2)\varrho_\tau(x-y)\psi_\delta\big(u_1(x,s)-u_2(y,s)\big)\,\dif x\,\dif y\,\dif s\\
&-\stred\int_0^t\int_{(\mt^N)^2}(\nabla_y
u_2)^*\!A(y)(\nabla_y
u_2)\varrho_\tau(x-y)\psi_\delta\big(u_1(x,s)-u_2(y,s)\big)\,\dif x\,\dif y\,\dif s\\
&-\stred\int_0^t\int_{(\mt^N)^2}(\nabla_x
u_1)^*\!A(x)(\nabla_x
u_1)\varrho_\tau(x-y)\psi_\delta\big(u_1(x,s)-u_2(y,s)\big)\,\dif x\,\dif y\,\dif s.
\end{split}
\end{equation*}
Let us define
$$\Theta_\delta(\xi)=\int_{-\infty}^\xi\psi_\delta(\zeta)\,\dif \zeta.$$
Then we have $\mathrm{J}=\mathrm{J}_1+\mathrm{J}_2+\mathrm{J}_3$ with
\begin{equation*}
\begin{split}
\mathrm{J}_1=&-\stred\!\int_0^t\int_{(\mt^N)^2}\!\!(\nabla_x u_1)^*\!\sigma(x)\sigma(x)(\nabla\varrho_\tau)(x-y)\Theta_\delta\big(u_1(x,s)-u_2(y,s)\big)\dif x\dif y\dif s,\\
\mathrm{J}_2=&-\stred\!\int_0^t\int_{(\mt^N)^2}\!\!(\nabla_y u_2)^*\!\sigma(y)\sigma(y)(\nabla\varrho_\tau)(x-y)\Theta_\delta\big(u_1(x,s)-u_2(y,s)\big)\dif x\dif y\dif s,\\
\mathrm{J}_3=&-\stred\!\int_0^t\int_{(\mt^N)^2}\!\big[|\sigma(x)\nabla_x u_1|^2+|\sigma(y)\nabla_y u_2|^2\big]\varrho_\tau(x-y)\\
&\qquad\qquad\quad\times\psi_\delta\big(u_1(x,s)-u_2(y,s)\big)\,\dif x\,\dif y\,\dif s.
\end{split}
\end{equation*}
Let
$$\mathrm{H}=\stred\!\int_0^t\int_{(\mt^N)^2}\!(\nabla_x u_1)^*\sigma(x)\sigma(y)(\nabla_y u_2)\varrho_\tau(x-y)\psi_\delta\big(u_1(x,s)-u_2(y,s)\big)\,\dif x\,\dif y\,\dif s.$$
In order to prove that $\mathrm{J}$ is nonpositive for $\tau$ small enough, it is sufficient to show
$\mathrm{J}_1=\mathrm{H}+o(1),$ $\mathrm{J}_2=\mathrm{H}+o(1)$, where $o(1)\rightarrow 0$ as $\tau\rightarrow 0$ uniformly in $\delta$. Indeed, we then obtain
\begin{equation*}
\begin{split}
\mathrm{J}=&-\stred\!\int_0^t\int_{(\mt^N)^2}\!\!\big|\sigma(x)\nabla_x u_1-\sigma(y)\nabla_y u_2\big|^2\!\varrho_\tau(x-y)\\
&\qquad\qquad\times\psi_\delta\big(u_1(x,s)-u_2(y,s)\big)\,\dif x\,\dif y\,\dif s+o(1)\leq o(1).
\end{split}
\end{equation*}
We will prove the claim for $\mathrm{J}_1$ only since the proof for $\mathrm{J}_2$ is analogous. Define
$$g(x,y,s)=(\nabla_x u_1)^*\sigma(x)\Theta_\delta\big(u_1(x,s)-u_2(y,s)\big).$$
Here, we make use of the assumption (iii) in Definition \ref{kinsol}. Observe, that it gives us some regularity of the solution in the nondegeneracy zones of the diffusion matrix $A$ and hence $g\in L^2(\Omega\times\mt^N_x\times\mt^N_y\times[0,T])$.
It holds
\begin{equation*}
\begin{split}
\mathrm{J}_1&=-\stred\int_0^t\int_{(\mt^N)^2}g(x,y,s)\Big(\sigma(x)-\sigma(y)\Big)(\nabla\varrho_\tau)(x-y)\,\dif x\,\dif y\,\dif s\\
&\qquad-\stred\int_0^t\int_{(\mt^N)^2}g(x,y,s)\sigma(y)(\nabla\varrho_\tau)(x-y)\,\dif x\,\dif y\,\dif s,\\
\mathrm{H}&=\stred\int_0^t\int_{(\mt^N)^2}g(x,y,s)\diver_y\Big(\sigma(y)\varrho_\tau(x-y)\Big)\,\dif x\,\dif y\,\dif s\\
&=\stred\int_0^t\int_{(\mt^N)^2}g(x,y,s)\diver\big(\sigma(y)\big)\varrho_\tau(x-y)\,\dif x\,\dif y\,\dif s\\
&\qquad-\stred\int_0^t\int_{(\mt^N)^2}g(x,y,s)\sigma(y)(\nabla\varrho_\tau)(x-y)\,\dif x\,\dif y\,\dif s,
\end{split}
\end{equation*}
where divergence is applied row-wise to a matrix-valued function. Therefore, it is enough to show that the first terms in $\mathrm{J}_1$ and $\mathrm{H}$ have the same limit value if $\tau\rightarrow 0$.
For $\mathrm{H}$, we obtain easily
\begin{equation*}
\begin{split}
\stred\int_0^t\int_{(\mt^N)^2}g(x,y,s)\diver&\big(\sigma(y)\big)\varrho_\tau(x-y)\,\dif x\,\dif y\,\dif s\\
&\quad\longrightarrow\,\stred\int_0^t\int_{\mt^N}g(y,y,s)\diver\big(\sigma(y)\big)\,\dif y\,\dif s\\
\end{split}
\end{equation*}
so it remains to verify
\begin{equation*}
\begin{split}
-\stred\int_0^t\int_{(\mt^N)^2}g(x,y,s)\Big(\sigma(x)&-\sigma(y)\Big)(\nabla\varrho_\tau)(x-y)\,\dif x\,\dif y\,\dif s\\
&\longrightarrow\stred\int_0^t\int_{\mt^N}g(y,y,s)\diver\big(\sigma(y)\big)\,\dif y\,\dif s.
\end{split}
\end{equation*}
We will use similar arguments as in the commutation lemma of DiPerna and Lions (see \cite[Lemma II.1]{diperna}). Let us denote by $g^i$ the $i^{th}$ element of $g$ and by $\sigma^i$ the $i^{th}$ row of $\sigma$. Since $\tau|\nabla\varrho_\tau|(\cdot)\leq C \varrho_{2\tau}(\cdot)$ with a constant independent of $\tau$, we obtain the following estimate
\begin{equation*}
\begin{split}
\stred\int_0^t&\int_{\mt^N}\bigg|\int_{\mt^N}g^i(x,y,s)\Big(\sigma^i(x)-\sigma^i(y)\Big)(\nabla\varrho_\tau)(x-y)\,\dif x\bigg|\,\dif y\,\dif s\\
&\leq C\esssup_{\substack{x',y'\in\mt^N\\|x'-y'|\leq\tau}}\bigg|\frac{\sigma^i(x')-\sigma^i(y')}{\tau}\bigg|\,\stred \int_0^T\int_{(\mt^N)^2}\big|g^i(x,y,s)\big|\varrho_{2\tau}(x-y)\,\dif x\,\dif y\,\dif s.
\end{split}
\end{equation*}
Note that according to \cite{freidlin}, \cite{phillips}, the square-root matrix of $A$ is Lipschitz continuous and therefore the essential supremum can be estimated by a constant independent of $\tau$.
Next
\begin{equation*}
\begin{split}
\stred \int_0^T\int_{(\mt^N)^2}&\big|g^i(x,y,s)\big|\varrho_{2\tau}(x-y)\,\dif x\,\dif y\,\dif s\\
&\leq\bigg(\stred\int_0^T\int_{(\mt^N)^2}\big|g^i(x,y,s)\big|^2\varrho_{2\tau}(x-y)\,\dif x\,\dif y\,\dif s\bigg)^\frac{1}{2}\\
&\qquad\qquad\times\bigg(\int_{(\mt^N)^2}\varrho_{2\tau}(x-y)\,\dif x\,\dif y\bigg)^\frac{1}{2}\\
&\leq\bigg(\stred\int_0^T\int_{\mt^N}\big|(\nabla_x u_1)^*\sigma(x)\big|^2\int_{\mt^N}\varrho_{2\tau}(x-y)\,\dif y\,\dif x\,\dif s\bigg)^\frac{1}{2}\\
&\leq\big\|(\nabla_x u_1)^*\sigma(x)\big\|_{L^2(\Omega\times\mt^N\times[0,T])}.
\end{split}
\end{equation*}
So we get an estimate which is independent of $\tau$ and $\delta$.
It is sufficient to consider the case when $g^i$ and $\sigma^i$ are smooth. The general case follows by density argument from the above bound. It holds
\begin{equation*}
\begin{split}
&-\stred\int_0^t\int_{(\mt^N)^2}g^i(x,y)\Big(\sigma^i(x)-\sigma^i(y)\Big)(\nabla\varrho_\tau)(x-y)\,\dif x\,\dif y\,\dif s\\
&\qquad=-\frac{1}{\tau^{N+1}}\stred\int_0^t\int_{(\mt^N)^2}\int_0^1g^i(x,y)\,\totdif\sigma^i\big(y+r(x-y)\big)(x-y)\\
&\qquad\qquad\qquad\qquad\qquad\qquad\cdotp(\nabla\varrho)\Big(\frac{x-y}{\tau}\Big)\,\dif r\,\dif x\,\dif y\,\dif s\\
&\qquad=-\stred\int_0^t\int_{(\mt^N)^2}\int_0^1g^i(y+\tau z,y)\,\totdif\sigma^i(y+r\tau z)z\,\cdotp(\nabla\varrho)(z)\,\dif r\,\dif z\,\dif y\,\dif s\\
&\longrightarrow\, -\stred\int_0^t\int_{(\mt^N)^2}g^i(y,y)\,\totdif\sigma^i(y)z\,\cdotp(\nabla\varrho)(z)\,\dif z\,\dif y\,\dif s.
\end{split}
\end{equation*}
Integration by parts now yields
$$\int_{\mt^N}z_i\partial_{z_j}\varrho(z)\,\dif z=-\delta_{ij},\qquad i,j\in\{1,\dots,N\},$$
hence
\begin{equation*}
\begin{split}
-\stred\!\int_0^t\!\int_{(\mt^N)^2}\!\!g^i(y,y)\totdif\sigma^i(y)z\,\cdotp(\nabla\varrho)(z)\dif z\dif y\dif s=\stred\!\int_0^t\!\int_{\mt^N}\!\!g^i(y,y)\diver\big(\sigma^i(y)\big)\dif y\dif s
\end{split}
\end{equation*}
and we deduce finally that $\mathrm{J}$ is nonpositive.

The last term $\mathrm{K}$ is, due to \eqref{skorolip}, bounded as follows
\begin{equation*}
\begin{split}
\mathrm{K}&\leq\frac{L}{2}\stred\int_0^t\int_{(\mt^N)^2}\!\varrho_\tau(x-y)|x-y|^2\!\int_{\mr^2}\psi_\delta(\xi-\zeta)\,\dif\nu^1_{x,s}(\xi)\,\dif\nu^2_{y,s}(\zeta)\,\dif x\,\dif y\,\dif s\\
&\;+\frac{L}{2}\stred\!\int_0^t\!\int_{(\mt^N)^2}\!\!\varrho_\tau(x-y)\!\int_{\mr^2}\!\psi_\delta(\xi-\zeta)|\xi-\zeta|h(|\xi-\zeta|)\dif\nu^1_{x,s}(\xi)\dif\nu^2_{y,s}(\zeta)\dif x\dif y\dif s\\
&\leq\frac{Lt}{2\delta}\int_{(\mt^N)^2}|x-y|^2\varrho_\tau(x-y)\,\dif x\,\dif y+\frac{LtC_\psi h(\delta)}{2}\int_{(\mt^N)^2}\varrho_\tau(x-y)\,\dif x\,\dif y\\
&\leq\frac{Lt}{2}\delta^{-1}\tau^2+\frac{LtC_\psi h(\delta)}{2},
\end{split}
\end{equation*}
where $C_\psi=\sup_{\xi\in\mr}|\xi\psi(\xi)|$. Finally, we deduce for all $t\in[0,T]$
\begin{equation*}
\begin{split}
\stred\int_{\mt^N}\int_\mr f_1^{\pm}(x,t,\xi)&\bar{f}_2^{\pm}(x,t,\xi)\,\dif\xi\,\dif x\leq\stred\int_{\mt^N}\int_{\mr}f_{1,0}(x,\xi)\bar{f}_{2,0}(x,\xi)\,\dif\xi\,\dif x\\
&+Ct\big(\delta\tau^{-1}+\delta^{-1}\tau^2+h(\delta)\big)+\eta_t(\tau,\delta)+\eta_0(\tau,\delta).
\end{split}
\end{equation*}
Taking $\delta=\tau^{4/3}$ and letting $\tau\rightarrow0$ yields
$$\stred\int_{\mt^N}\int_\mr f_1^{\pm}(t)\bar{f}_2^{\pm}(t)\,\dif\xi\,\dif x\leq\stred\int_{\mt^N}\int_\mr f_{1,0}\bar{f}_{2,0}\,\dif\xi\,\dif x.$$
Let us consider now $f_1=f_2=f$. Since $f_0=\ind_{u_0>\xi}$ we have the identity $f_0\bar{f}_0=0$ and therefore $f^\pm(1-f^\pm)=0$ a.e. $(\omega,x,\xi)$ and for all $t$. The fact that $f^\pm$ is a kinetic function hence implies that there exist $u^\pm:\Omega\times\mt^N\times[0,T]\rightarrow\mr$ such that $f^\pm=\ind_{u^\pm>\xi}$ for almost every $(\omega,x,\xi)$ and all $t$.
Furthermore, it follows from Proposition \ref{limits} that $u^+=u^-=u$ except for a countable set of $t$. Since
$$\int_\mr \ind_{u^\pm_1>\xi}\overline{\ind_{u^\pm_2>\xi}}\,\dif\xi=(u^\pm_1-u^\pm_2)^+$$
we have the comparison property
$$\stred\big\|\big(u_1^{\pm}(t)-u_2^{\pm}(t)\big)^+\big\|_{L^1(\mt^N)}\leq\stred\big\|(u_{1,0}-u_{2,0})^+\big\|_{L^1(\mt^N)}.$$
It remains to show that $u^\pm$ are also left and right limits of $u$ in the sense of $L^p(\mt^N)$. But this is a consequence of the following: for $p\geq 1$ and $s,t\in[0,T],s>t$
\begin{equation*}
\begin{split}
\int_{\mt^N}\big|u(x,s)-u^+(x,t)\big|^p\dif x&=\int_{\mt^N}\int_\mr\big(\ind_{u(x,s)>\xi}-\ind_{u^+(x,t)>\xi}\big)\frac{\dif|\xi|^p}{\dif \xi}\,\dif\xi\,\dif x\\
&=\int_{\mt^N}\int_\mr\big(f(x,s,\xi)-f^+(x,t,\xi)\big)\frac{\dif|\xi|^p}{\dif \xi}\,\dif\xi\,\dif x
\end{split}
\end{equation*}
and the claim follows from the weak*-convergence of $f(s)\rightarrow f^+(t)$ as $s\downarrow t$.
\end{proof}

\end{thm}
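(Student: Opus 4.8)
The plan is to derive the comparison principle from the doubling-of-variables estimate of Proposition \ref{propdubling}. First I would apply \eqref{doubling} with $\varrho=\varrho_\tau$ and $\psi=\psi_\delta$, where $(\varrho_\tau)$, $(\psi_\delta)$ are approximations of the identity on $\mt^N$ and $\mr$ built from nonnegative symmetric kernels with $\supt\varrho\subset B(0,1/2)$, $\supt\psi\subset(-1,1)$. A standard mollification argument identifies the left-hand side of \eqref{doubling} with $\stred\int_{\mt^N}\int_\mr f_1^\pm(x,t,\xi)\bar f_2^\pm(x,t,\xi)\,\dif\xi\,\dif x$ up to an error $\eta_t(\tau,\delta)\to 0$ as $\tau,\delta\to 0$, and likewise for the term carrying the initial data. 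Thus everything reduces to bounding the three remainder terms $\mathrm I$, $\mathrm J$, $\mathrm K$ so that a suitable joint limit in $\tau,\delta$ annihilates them.

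For $\mathrm I$ I would use the polynomial growth of $b$ to write $|b(\xi)-b(\zeta)|\le\varGamma(\xi,\zeta)|\xi-\zeta|$ with $\varGamma$ of polynomial growth, integrate by parts in $\xi$ and $\zeta$ to transfer the derivatives of $f_1,\bar f_2$ onto $\nu^1_{x,s},\nu^2_{y,s}$, and exploit $\supt\psi_\delta\subset(-\delta,\delta)$ together with $\int_{\mt^N}|\nabla_x\varrho_\tau|\le C\tau^{-1}$; since $\nu^1,\nu^2$ vanish at infinity this gives $|\mathrm I|\le Ct\delta\tau^{-1}$. For $\mathrm K$, assumption \eqref{skorolip}, split into its $|x-y|^2$ part and its $|\xi-\zeta|h(|\xi-\zeta|)$ part, combined with $\supt\varrho_\tau\subset B(0,\tau/2)$ and $\supt\psi_\delta\subset(-\delta,\delta)$, yields $\mathrm K\le\frac{Lt}{2}\delta^{-1}\tau^2+\frac{LtC_\psi}{2}h(\delta)$ with $C_\psi=\sup_\xi|\xi\psi(\xi)|$.

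The heart of the argument, and the step I expect to be the main obstacle, is the parabolic term $\mathrm J$: one must show $\mathrm J\le o(1)$ with $o(1)\to 0$ as $\tau\to 0$ uniformly in $\delta$. I would rewrite the two second-order contributions in $\mathrm J$ by integration by parts in $x$ and $y$, then use a convolution regularization together with hypothesis (iii) of Definition \ref{kinsol} --- which supplies the regularity of the solutions in the nondegeneracy zones of $A$ --- to justify $\partial_{x_i}f_1=\partial_{x_i}u_1\,\delta_{u_1=\xi}$ and $\partial_{y_i}\bar f_2=-\partial_{y_i}u_2\,\delta_{u_2=\zeta}$ in the sense of distributions. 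Combining this with the two $n_1$-terms already present in $\mathrm J$ and introducing $\Theta_\delta(\xi)=\int_{-\infty}^\xi\psi_\delta$, one gets $\mathrm J=\mathrm J_1+\mathrm J_2+\mathrm J_3$, where $\mathrm J_3\le 0$ collects the squares $|\sigma(x)\nabla_x u_1|^2+|\sigma(y)\nabla_y u_2|^2$ and $\mathrm J_1,\mathrm J_2$ are the cross terms. Setting $\mathrm H=\stred\int_0^t\int_{(\mt^N)^2}(\nabla_x u_1)^*\sigma(x)\sigma(y)(\nabla_y u_2)\varrho_\tau(x-y)\psi_\delta(u_1-u_2)\,\dif x\,\dif y\,\dif s$, it suffices to prove $\mathrm J_1=\mathrm H+o(1)$ and $\mathrm J_2=\mathrm H+o(1)$, for then $\mathrm J=-\stred\int_0^t\int_{(\mt^N)^2}|\sigma(x)\nabla_x u_1-\sigma(y)\nabla_y u_2|^2\varrho_\tau\psi_\delta+o(1)\le o(1)$. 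The identity for $\mathrm J_1$ is a commutation statement in the spirit of DiPerna--Lions \cite{diperna}: with $g(x,y,s)=(\nabla_x u_1)^*\sigma(x)\Theta_\delta(u_1-u_2)\in L^2$ by (iii), the only nontrivial piece is $-\stred\int_0^t\int_{(\mt^N)^2}g(x,y,s)(\sigma(x)-\sigma(y))(\nabla\varrho_\tau)(x-y)$, which by the Lipschitz continuity of the square root $\sigma$ of $A$ (see \cite{freidlin}, \cite{phillips}) together with $\tau|\nabla\varrho_\tau|\le C\varrho_{2\tau}$ is bounded uniformly in $\tau$ and $\delta$, and by a first-order Taylor expansion of $\sigma$ and the substitution $x=y+\tau z$ converges to $\stred\int_0^t\int_{\mt^N}g(y,y,s)\diver(\sigma(y))$, which is precisely the limit of the matching piece of $\mathrm H$; hence $\mathrm J\le o(1)$.

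Finally I would collect the estimates into $\stred\int_{\mt^N}\int_\mr f_1^\pm(t)\bar f_2^\pm(t)\,\dif\xi\,\dif x\le\stred\int_{\mt^N}\int_\mr f_{1,0}\bar f_{2,0}\,\dif\xi\,\dif x+Ct(\delta\tau^{-1}+\delta^{-1}\tau^2+h(\delta))+\eta_t(\tau,\delta)+\eta_0(\tau,\delta)$, choose $\delta=\tau^{4/3}$, and let $\tau\to 0$ to obtain $\stred\int_{\mt^N}\int_\mr f_1^\pm(t)\bar f_2^\pm(t)\le\stred\int_{\mt^N}\int_\mr f_{1,0}\bar f_{2,0}$. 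Taking $u_1=u_2=u$ then forces $f^\pm(1-f^\pm)=0$ a.e., so, since $f^\pm$ is a kinetic function, there is $u^\pm$ with $f^\pm=\ind_{u^\pm>\xi}$, and Proposition \ref{limits} gives $u^+=u^-=u$ off a countable set. For general $u_1,u_2$ the identity $\int_\mr\ind_{u_1^\pm>\xi}\overline{\ind_{u_2^\pm>\xi}}\,\dif\xi=(u_1^\pm-u_2^\pm)^+$ converts the bound into $\stred\|(u_1^\pm(t)-u_2^\pm(t))^+\|_{L^1(\mt^N)}\le\stred\|(u_{1,0}-u_{2,0})^+\|_{L^1(\mt^N)}$, and adding the symmetric inequality yields \eqref{comparison}. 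The $L^p$ left/right limit statement follows by writing $\int_{\mt^N}|u(x,s)-u^+(x,t)|^p\,\dif x=\int_{\mt^N}\int_\mr(f(x,s,\xi)-f^+(x,t,\xi))\,\frac{\dif|\xi|^p}{\dif\xi}\,\dif\xi\,\dif x$ and invoking the weak-$*$ convergence $f(s)\to f^+(t)$ as $s\downarrow t$.
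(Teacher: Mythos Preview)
Your proposal is correct and follows essentially the same route as the paper's proof: the same doubling-of-variables setup with $\varrho_\tau,\psi_\delta$, the same bounds $|\mathrm I|\le Ct\delta\tau^{-1}$ and $\mathrm K\le\tfrac{Lt}{2}\delta^{-1}\tau^2+\tfrac{LtC_\psi}{2}h(\delta)$, the same decomposition $\mathrm J=\mathrm J_1+\mathrm J_2+\mathrm J_3$ with the DiPerna--Lions commutation argument (via Lipschitz $\sigma$ and the substitution $x=y+\tau z$) to show $\mathrm J_1,\mathrm J_2=\mathrm H+o(1)$, the same choice $\delta=\tau^{4/3}$, and the same closing steps to obtain $f^\pm=\ind_{u^\pm>\xi}$, the one-sided comparison, and the $L^p$ left/right limits.
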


As a consequence of Theorem \ref{uniqueness}, namely from the comparison property \eqref{comparison}, we obtain the uniqueness part of Theorem \ref{main}. The proof is similar to \cite[Corollary 12]{debus}.

\begin{cor}[Continuity in time]
Let $u:\Omega\times\mt^N\times[0,T]\rightarrow\mr$ be a kinetic solution to \eqref{rovnice}. Then, for all $p\in[1,\infty)$, $u$ has almost surely continuous trajectories in $L^p(\mt^N)$.

\end{cor}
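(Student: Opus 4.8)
The plan is to deduce continuity from the one-sided limits obtained in Theorem~\ref{uniqueness} by showing that $f=\ind_{u>\xi}$ can have no jump in time, which in turn is forced by the finiteness of the kinetic measure $m$.

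By Theorem~\ref{uniqueness} there is a full-measure event on which, for every $t^*\in[0,T]$, the limits $u^\pm(t^*)\in L^p(\mt^N)$ exist, $f^{*,\pm}=\ind_{u^\pm(t^*)>\xi}$, one has $u(s)\to u^+(t^*)$ in $L^p(\mt^N)$ as $s\downarrow t^*$ and $u(s)\to u^-(t^*)$ as $s\uparrow t^*$, and $u^+(t^*)=u^-(t^*)=u(t^*)$ for every $t^*$ outside a fixed countable set~$\mathcal N$. Hence it suffices to prove that, almost surely, $u^+(t^*)=u^-(t^*)$ for all $t^*$: the common value is then, at each point, simultaneously the left and the right limit of $u$ in $L^p(\mt^N)$, so it defines a continuous representative of $t\mapsto u(t)$, coinciding with it off the Lebesgue-null set of times $\mathcal N$. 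This works for every $p\in[1,\infty)$ at once.

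To isolate the jump I would insert into the weak formulation~\eqref{kinet} test functions $\varphi(x,t,\xi)=\theta_\epsilon(t)\psi(x,\xi)$ with $\psi\in C_c^\infty(\mt^N\times\mr)$, where $\theta_\epsilon$ is a smooth nonincreasing cutoff equal to $1$ on $[0,t^*]$ and to $0$ on $[t^*+\epsilon,T]$, respectively equal to $1$ on $[0,t^*-\epsilon]$ and to $0$ on $[t^*,T]$; letting $\epsilon\to0$ and subtracting the two identities, all terms of~\eqref{kinet} agree in the limit except two. The term $\int_0^T\theta_\epsilon'(t)\langle f(t),\psi\rangle\,\dif t$ tends to $-\langle f^{*,+},\psi\rangle$ in the first case and to $-\langle f^{*,-},\psi\rangle$ in the second, by Proposition~\ref{limits}; and the two limits of $m(\partial_\xi\varphi)$ differ exactly by the mass that $m$ carries on the slice $\{t=t^*\}$, namely $\langle\mu_{t^*},\partial_\xi\psi\rangle$, where $\mu_{t^*}$ denotes the (finite) restriction of $m$ to $\mt^N\times\{t^*\}\times\mr$. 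The drift and second-order terms are absolutely continuous in the upper limit of integration, and the stochastic term is a pathwise continuous martingale, so these contribute nothing to the difference. We obtain
\begin{equation*}
f^{*,+}-f^{*,-}=\partial_\xi\mu_{t^*}\qquad\text{in }\mathcal D'(\mt^N\times\mr),
\end{equation*}
simultaneously for all $t^*$ on one full-measure event.

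Finally, pairing this identity with $\phi(x)\chi_R(\xi)$, where $\phi\in C^\infty(\mt^N)$ is arbitrary and $\chi_R(\xi)=\chi(\xi/R)$ with $\chi\in C_c^\infty(\mr)$, $\chi\equiv1$ on $[-1,1]$, the right-hand side equals $-\langle\mu_{t^*},\phi\,\chi_R'\rangle\to0$ as $R\to\infty$ (since $\|\chi_R'\|_\infty=R^{-1}\|\chi'\|_\infty\to0$ and $\mu_{t^*}$ is finite), while the left-hand side tends to $\int_{\mt^N}\phi(x)\bigl(u^+(t^*,x)-u^-(t^*,x)\bigr)\,\dif x$ by dominated convergence and the elementary identity $\int_\mr(\ind_{a>\xi}-\ind_{b>\xi})\,\dif\xi=a-b$. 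As $\phi$ is arbitrary, $u^+(t^*)=u^-(t^*)$ in $L^p(\mt^N)$ for all $t^*$ with probability one, which completes the argument. The only genuinely delicate point is justifying the passage to the limit in $\theta_\epsilon$ inside each term of~\eqref{kinet} — for which one uses the finiteness of $m$, the bound $0\le f\le1$, and the one-sided limits from Proposition~\ref{limits} — together with checking that the martingale and Lebesgue-in-time contributions depend continuously on the upper endpoint; the remaining steps only reuse what was already established in Theorem~\ref{uniqueness}.
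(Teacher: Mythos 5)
Your proposal is correct and follows essentially the route the paper intends: the corollary is stated with the proof deferred to \cite[Corollary 12]{debus}, whose argument is precisely yours --- derive $f^{*,+}-f^{*,-}=\partial_\xi\bigl(m|_{\{t=t^*\}}\bigr)$ from the weak formulation \eqref{kinet}, use Theorem \ref{uniqueness} to write $f^{*,\pm}=\ind_{u^\pm>\xi}$, and integrate in $\xi$ against a cutoff so that the finiteness of the kinetic measure forces $u^+(t^*)=u^-(t^*)$ for every $t^*$. The only points left implicit (the countable-test-function argument needed to get the slice identity simultaneously for all $t^*$ on one full-measure event, and the short diagonal argument showing that the common value $u^\pm$ is itself continuous in $t$) are routine and at the level of detail the paper itself omits.
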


\section{Existence - smooth initial data}

In this section we prove the existence part of Theorem \ref{main} under an additional assumption upon the initial condition: $u_0\in L^p(\Omega;C^\infty(\mt^N)),$ for all $p\in[1,\infty)$.
We employ the vanishing viscosity method, i.e. we approximate the equation \eqref{rovnice} by certain nondegenerate problems, while using also some appropriately chosen approximations $\,\varPhi^\varepsilon,\,B^\varepsilon$ of $\,\varPhi$ and $B$, respectively. These equations have smooth solutions and consequent passage to the limit gives the existence of a kinetic solution to the original equation. Nevertheless, the limit argument is quite technical and has to be done in several steps. It is based on the compactness method: the uniform energy estimates yield tightness of a sequence of approximate solutions and thus, on another probability space, this sequence converges almost surely due to the Skorokhod representation theorem. The limit is then shown to be a martingale kinetic solution to \eqref{rovnice}. Combining this fact and the pathwise uniqueness with the the Gy\"{o}ngy-Krylov characterization of convergence in probability, we finally obtain the desired kinetic solution.

\subsection{Nondegenerate case}
\label{nondegener}

Consider approximations to the identity $(\varphi_\varepsilon), (\psi_\varepsilon)$ on $\mt^N\times\mr$ and $\mr$, respectively, and a truncation $(\chi_\varepsilon)$ on $\mr$, i.e. we define $\chi_\varepsilon(\xi)=\chi(\varepsilon\xi)$, where $\chi$ is a smooth function with bounded support satisfying $0\leq\chi\leq 1$ and
$$\chi(\xi)=\begin{cases}
            1,& \text{if }\;|\xi|\leq\frac{1}{2},\\
	    0,& \text{if }\;|\xi|\geq1.
           \end{cases}
$$
The regularizations of $\varPhi,\,B$ are then defined in the following way
\begin{equation*}
B_i^\varepsilon(\xi)=\big((B_i*\psi_\varepsilon)\chi_\varepsilon\big)(\xi),\;\quad i=1,\dots,N,
\end{equation*}
\begin{equation*}
g^\varepsilon_k(x,\xi)=\begin{cases}
                                \big((g_k*\varphi_\varepsilon)\chi_\varepsilon\big)(x,\xi),& \text{if }\;k\leq\lfloor 1/\varepsilon\rfloor,\\
				0,&\text{if }\;k>\lfloor 1/\varepsilon\rfloor,
                               \end{cases}
\end{equation*}
where $x\in\mt^N,\xi\in\mr$. Consequently, we set $B^\varepsilon=(B_1^\varepsilon,\dots,B_N^\varepsilon)$ and define the operator $\,\varPhi^\varepsilon\,$ by $\,\varPhi^\varepsilon(z)e_k=g_k^\varepsilon(\cdot,z(\cdot)),\,z\in L^2(\mt^N)$.
Clearly, the approximations $B^\varepsilon,\,g_k^\varepsilon$ are of class $C^\infty$ with a compact support therefore Lipschitz continuous. Moreover, the functions $g_k^\varepsilon$ satisfy \eqref{linrust}, \eqref{skorolip} uniformly in $\varepsilon$ and the following Lipschitz condition holds true
\begin{equation}\label{lipp}
\forall x\in\mt^N\quad\forall \xi,\,\zeta\in\mr\quad\sum_{k\geq1}|g_k^\varepsilon(x,\xi)-g_k^\varepsilon(x,\zeta)|^2\leq L_\varepsilon|\xi-\zeta|^2.
\end{equation}
From \eqref{linrust} we conclude that $\varPhi^\varepsilon(z)$ is Hilbert-Schmidt for all $z\in L^2(\mt^N)$.
Also the polynomial growth of $B$ remains valid for $B^\varepsilon$ and holds uniformly in $\varepsilon$.

Suitable approximation of the diffusion matrix $A$ is obtained as its perturbation by $\varepsilon \mathrm{I}$, where $\mathrm{I}$ denotes the identity matrix. We denote $A^\varepsilon=A+\varepsilon \mathrm{I}$.

Consider an approximation of problem \eqref{rovnice} by a nondegenerate equation
\begin{equation}\label{approx}
\begin{split}
\dif u^\varepsilon+ \diver \big(B^\varepsilon(u^\varepsilon)\big)\dif t&=\diver\big(A^\varepsilon(x)\nabla u^\varepsilon\big)\dif t + \varPhi^{\varepsilon}(u^\varepsilon)\,\dif W,\\
u^\varepsilon(0)&=u_0.
\end{split}
\end{equation}

\begin{thm}\label{smooth}
Let $u_0\in L^p(\Omega;C^\infty(\mt^N))$, for all $p\in[1,\infty)$. For any $\varepsilon>0$, there exists a predictable $C^\infty(\mt^N)$-valued process $\,u^\varepsilon$ solving \eqref{approx}.

\begin{proof}
For any fixed $\varepsilon>0$, the assumptions of \cite[Theorem 2.1]{hof} are satisfied and therefore the claim follows.
\end{proof}
\end{thm}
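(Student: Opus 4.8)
The plan is to regard \eqref{approx} as a \emph{nondegenerate} semilinear parabolic It\^o equation with globally Lipschitz coefficients and to invoke the general well-posedness theory for such equations, verifying that for each fixed $\varepsilon>0$ the hypotheses of \cite[Theorem 2.1]{hof} are met. The features one uses are: $A^\varepsilon=A+\varepsilon\mathrm{I}$ is $C^\infty$, symmetric and \emph{uniformly} elliptic, so $\diver(A^\varepsilon\nabla\,\cdot\,)$ is self-adjoint and generates an analytic semigroup $(S^\varepsilon(t))_{t\ge 0}$ on every $H^s(\mt^N)$ with the usual smoothing bounds; $B^\varepsilon$ is $C^\infty$ with compact support, hence bounded together with all its derivatives and globally Lipschitz; $\varPhi^\varepsilon$ involves only the finitely many modes $g_k^\varepsilon$, $k\le\lfloor 1/\varepsilon\rfloor$, each $C^\infty$ with compact support and satisfying \eqref{lipp}; and $u_0\in L^p(\Omega;C^\infty(\mt^N))$ for all $p$. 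Feeding these into the cited result yields a unique predictable process of the stated regularity, uniqueness being a byproduct that is used later in the Gy\"{o}ngy--Krylov argument.

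Should one want a self-contained derivation, I would argue in two stages. \emph{Stage 1 (a low-regularity solution).} Since all coefficients are globally Lipschitz and of linear growth, the mild formulation
\[
u^\varepsilon(t)=S^\varepsilon(t)u_0-\int_0^tS^\varepsilon(t-s)\diver\big(B^\varepsilon(u^\varepsilon(s))\big)\,\dif s+\int_0^tS^\varepsilon(t-s)\varPhi^\varepsilon(u^\varepsilon(s))\,\dif W(s)
\]
defines a contraction on a ball in $L^p\big(\Omega;C([0,T];L^2(\mt^N))\big)$: the integrable singularity $\|S^\varepsilon(t)\|_{\mathcal L(H^{-1},L^2)}\le Ct^{-1/2}$ absorbs the divergence in the drift, while the stochastic convolution is controlled by the Burkholder--Davis--Gundy inequality together with the Hilbert--Schmidt bound coming from \eqref{linrust}. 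The fixed point is a limit of predictable Picard iterates, hence predictable, and a Gronwall estimate gives the a priori bounds.

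\emph{Stage 2 (bootstrap to $C^\infty$).} One then propagates spatial regularity by running the same fixed point in $L^p\big(\Omega;C([0,T];H^s(\mt^N))\big)$, gaining one derivative at a time: $\|S^\varepsilon(t)\|_{\mathcal L(H^{s-1},H^s)}\le Ct^{-1/2}$, Moser-type estimates control $B^\varepsilon(u^\varepsilon)$ in $H^s$ — here the boundedness of \emph{all} derivatives of $B^\varepsilon$ is essential — and $\varPhi^\varepsilon(u^\varepsilon)$ stays bounded in $L_2(\mathfrak{U};H^s(\mt^N))$ because only finitely many smooth $g_k^\varepsilon$ enter. Iterating over $s=1,2,\dots$ and using the embedding $H^s(\mt^N)\hookrightarrow C^k(\mt^N)$ for $s>k+N/2$ gives $u^\varepsilon\in L^p\big(\Omega;C([0,T];C^\infty(\mt^N))\big)$ for every $p\in[1,\infty)$; applying the It\^o formula to $\|u^\varepsilon\|_{H^s}^2$ afterwards yields the energy bounds needed in Section~4. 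I expect this regularity bootstrap to be the main obstacle: arbitrarily high spatial smoothness has to be pushed through both the divergence-form drift and the stochastic convolution, uniformly on $[0,T]$, and it is precisely the compact support of $B^\varepsilon$ and the finite truncation of the noise that keep the nonlinear estimates closed at every order.
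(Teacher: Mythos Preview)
Your proposal is correct and follows exactly the paper's approach: verify that for each fixed $\varepsilon>0$ the regularised equation \eqref{approx} satisfies the hypotheses of \cite[Theorem~2.1]{hof} and invoke that result. The paper's own proof is a one-line citation of \cite{hof}; your additional verification of the hypotheses and the optional self-contained bootstrap argument go beyond what the paper provides but are consistent with it.
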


Thus, using the same computation as in the Section \ref{notation}, one can verify that the solution $u^\varepsilon$ satisfies the kinetic formulation of \eqref{approx}: let $f^\varepsilon=\ind_{u^\varepsilon>\xi}$
\begin{equation*}
\begin{split}
\partial_tf^\varepsilon+b^\varepsilon(\xi)\cdotp\nabla f^\varepsilon&-\sum_{i,j=1}^N\partial_{x_i}\big(A_{ij}(x)\partial_{x_j}f^\varepsilon\big)-\varepsilon\Delta f^\varepsilon\\
&\qquad=\delta_{u^\varepsilon=\xi}\varPhi^\varepsilon(u^\varepsilon)\dot{W}+\partial_\xi\bigg(m^\varepsilon-\frac{1}{2}G^2_\varepsilon\delta_{u^\varepsilon=\xi}\bigg),
\end{split}
\end{equation*}
where $m^\varepsilon=n^\varepsilon_1+n^\varepsilon_2$ and both these measures are explicitly known and correspond to the diffusion matrix $A+\varepsilon I$:
$$n_1^\varepsilon=\big(\nabla u^\varepsilon\big)^* A(x)\big(\nabla u^\varepsilon\big)\delta_{u^\varepsilon=\xi},\qquad n^\varepsilon_2=\varepsilon|\nabla u^\varepsilon|^2\delta_{u^\varepsilon=\xi}.$$
Note, that by taking limit in $\varepsilon$ we lose this precise structure of $n_2$. As can be seen from the derivation of kinetic formulation, $u^\varepsilon$ solve \eqref{kinetic} even in a stronger sense than the one from Definition \ref{kinsol}. Indeed, for any $\varphi\in\mc^\infty_c(\mt^N\times\mr),\;t\in[0,T],$ the following holds true $\prst$-a.s.
\begin{align}\label{kinetaprox}
\begin{split}
\big\langle& f^\varepsilon(t),\varphi\big\rangle-\big\langle f_0,\varphi\big\rangle-\int_0^t\big\langle f^\varepsilon(s),b^\varepsilon(\xi)\cdotp\nabla\varphi\big\rangle\,\dif s\\
&\qquad\quad-\int_0^t\Big\langle f^\varepsilon(s),\sum_{i,j=1}^N\partial_{x_j}\big(A_{ij}(x)\partial_{x_i}\varphi\big)\Big\rangle\,\dif s-\varepsilon\int_0^t\big\langle f^\varepsilon(s),\Delta\varphi\big\rangle\,\dif s\\
&=\int_0^t\big\langle\delta_{u^\varepsilon=\xi}\,\varPhi^\varepsilon(u^\varepsilon)\,\dif W,\varphi\big \rangle+\frac{1}{2}\int_0^t\big\langle\delta_{u^\varepsilon=\xi}\,G^2,\partial_\xi\varphi\big\rangle\,\dif s-\big\langle m^\varepsilon,\partial_\xi\varphi\big \rangle([0,t)).
\end{split}
\end{align}

\subsection{Energy estimates}

In this subsection we shall establish the so-called energy estimate that makes it possible to find uniform bounds for approximate solutions and that will later on yield a solution by invoking a compactness argument.

\begin{lemma}
For all $\,\varepsilon\in(0,1)$, for all $\,t\in[0,T]$ and for all $p\in[2,\infty)$, the solution $u^\varepsilon$ satisfies the inequality
\begin{equation}\label{energyest}
\begin{split}
\stred\|u^\varepsilon(t)\|_{L^p(\mt^N)}^p\leq C\big(1+\stred\|u_0\|_{L^p(\mt^N)}^p\big).
\end{split}
\end{equation}

\begin{proof}
We apply the It\^o formula using $f(v)=\|v\|_{L^p(\mt^N)}^p$. If $q$ is the conjugate exponent to $p$ then $f'(v)=p|v|^{p-2}v\in\ml^q(\mt^N)$ and
$$f''(v)=p(p-1)|v|^{p-2}\,\mathrm{Id}\;\;\in\mathscr{L}\big(\ml^p(\mt^N),\ml^q(\mt^N)\big).$$
Therefore
\begin{equation}\label{if1}
\begin{split}
\|u^\varepsilon(t)\|_{L^p(\mt^N)}^p=&\|u_0\|_{L^p(\mt^N)}^p-p\int_0^t\int_{\mt^N}|u^\varepsilon|^{p-2}u^\varepsilon\diver\big(B^\varepsilon(u^\varepsilon)\big)\,\dif x\,\dif s\\
&\;+p\int_0^t\int_{\mt^N}|u^\varepsilon|^{p-2}u^\varepsilon\diver\big(A(x)\nabla u^\varepsilon\big)\,\dif x\,\dif s\\
&\;+\varepsilon p\int_0^t\int_{\mt^N}|u^\varepsilon|^{p-2}u^\varepsilon\Delta u^\varepsilon\,\dif x\,\dif s\\
&\;+p\sum_{k\geq1}\int_0^t\int_{\mt^N}|u^\varepsilon|^{p-2}u^\varepsilon g_k^\varepsilon(x,u^\varepsilon)\,\dif x\,\dif\beta_k(s)\\
&\;+\frac{1}{2}p(p-1)\int_0^t\int_{\mt^N}|u^\varepsilon|^{p-2}G_\varepsilon^2(x,u^\varepsilon)\,\dif x\,\dif s.
\end{split}
\end{equation}
We conclude that the second term on the right hand side vanishes and using the integration by parts, the third one as well as the fourth one is nonpositive. The last term is estimated as follows
\begin{equation*}
\begin{split}
\frac{1}{2}p(p-1)\int_0^t\int_{\mt^N}|u^\varepsilon|^{p-2}G_\varepsilon^2(x,u^\varepsilon)\,\dif x\,\dif s&\leq C\int_0^t\int_{\mt^N}|u^\varepsilon|^{p-2}\big(1+|u^\varepsilon|^2\big)\,\dif x\,\dif s\\
&\leq C\Big(1+\int_0^t\|u^\varepsilon(s)\|_{L^p(\mt^N)}^p\dif s\Big).
\end{split}
\end{equation*}
Finally, expectation and application of the Gronwall lemma yield \eqref{energyest}.
\end{proof}

\end{lemma}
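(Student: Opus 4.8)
The plan is to apply the infinite-dimensional Itô formula to the functional $f(v) = \|v\|_{L^p(\mt^N)}^p$ evaluated along the process $u^\varepsilon$ solving the nondegenerate equation \eqref{approx}. Since $u^\varepsilon$ is a $C^\infty(\mt^N)$-valued process by Theorem~\ref{smooth}, every term below is classically well-defined; the convexity of $f$ means no regularization beyond what Theorem~\ref{smooth} provides is needed, and the second derivative $f''(v) = p(p-1)|v|^{p-2}\,\mathrm{Id}$ supplies the Itô correction term against the Hilbert-Schmidt operator $\varPhi^\varepsilon(u^\varepsilon)$.

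First I would write out the Itô expansion, producing six contributions: the initial value $\|u_0\|_{L^p}^p$; the convection term $-p\int_0^t\int_{\mt^N}|u^\varepsilon|^{p-2}u^\varepsilon\,\diver(B^\varepsilon(u^\varepsilon))\,\dif x\,\dif s$; the degenerate-diffusion term with $A(x)$; the added-viscosity term with $\varepsilon\Delta u^\varepsilon$; the stochastic term $p\sum_k\int_0^t\int_{\mt^N}|u^\varepsilon|^{p-2}u^\varepsilon g_k^\varepsilon(x,u^\varepsilon)\,\dif x\,\dif\beta_k(s)$, which is a genuine martingale by \eqref{linrust}; and the quadratic-variation term $\tfrac12 p(p-1)\int_0^t\int_{\mt^N}|u^\varepsilon|^{p-2}G_\varepsilon^2(x,u^\varepsilon)\,\dif x\,\dif s$. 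The next step is to dispose of the deterministic non-stochastic terms by sign. The convection term vanishes: writing $\diver(B^\varepsilon(u^\varepsilon)) = b^\varepsilon(u^\varepsilon)\cdotp\nabla u^\varepsilon = \diver\big(\int^{u^\varepsilon} b^\varepsilon(\xi)|\xi|^{p-2}\xi\,p\,\dif\xi / \,\cdots\big)$, more directly one notes $|u^\varepsilon|^{p-2}u^\varepsilon\, b^\varepsilon(u^\varepsilon)\cdotp\nabla u^\varepsilon$ is itself a perfect spatial divergence, so its integral over $\mt^N$ is zero by periodicity. For the two diffusion terms, integration by parts moves a derivative onto $|u^\varepsilon|^{p-2}u^\varepsilon$, producing $-p(p-1)\int|u^\varepsilon|^{p-2}(\nabla u^\varepsilon)^*A(x)(\nabla u^\varepsilon)\,\dif x \le 0$ since $A$ is positive semidefinite, and analogously $-\varepsilon p(p-1)\int|u^\varepsilon|^{p-2}|\nabla u^\varepsilon|^2\,\dif x \le 0$.

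After taking expectations the martingale term drops out, and the only surviving positive contribution is the quadratic-variation term, which by \eqref{linrust} is bounded by $C\int_0^t\int_{\mt^N}|u^\varepsilon|^{p-2}(1+|u^\varepsilon|^2)\,\dif x\,\dif s \le C\big(1 + \int_0^t\|u^\varepsilon(s)\|_{L^p(\mt^N)}^p\,\dif s\big)$, using Young's inequality to absorb the $|u^\varepsilon|^{p-2}$ factor. This yields
\begin{equation*}
\stred\|u^\varepsilon(t)\|_{L^p(\mt^N)}^p \le \stred\|u_0\|_{L^p(\mt^N)}^p + C\Big(1 + \int_0^t \stred\|u^\varepsilon(s)\|_{L^p(\mt^N)}^p\,\dif s\Big),
\end{equation*}
and an application of Gronwall's lemma gives \eqref{energyest} with a constant independent of $\varepsilon$. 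The only genuine subtlety — the main obstacle — is justifying the Itô formula for $\|\cdot\|_{L^p}^p$, which is not $C^2$ on $L^p$ when $p$ is not an even integer: one handles this either by invoking a known Itô formula for the $L^p$-norm for SPDEs (e.g.\ via the approach behind \cite{hof}, since $u^\varepsilon$ is already smooth in $x$ and the equation is nondegenerate), or by first applying Itô to a smooth approximation $f_\eta(v) = \int_{\mt^N}(|u^\varepsilon|^2+\eta)^{p/2}$ and letting $\eta\downarrow 0$, checking that the sign structure of each term is preserved in the limit. Everything else is a routine integration-by-parts-and-Gronwall computation.
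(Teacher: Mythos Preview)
Your proposal is correct and follows essentially the same route as the paper: Itô formula for $\|\cdot\|_{L^p}^p$, the convection term vanishes as a full divergence, the two diffusion terms are nonpositive after integration by parts, the stochastic integral disappears in expectation, the Itô correction is bounded via \eqref{linrust}, and Gronwall closes the estimate. Your extra remark on justifying the Itô formula (via smoothing $(|v|^2+\eta)^{p/2}$ or invoking the known $L^p$-Itô formula) addresses a point the paper simply takes for granted.
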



\begin{cor}
The set $\{u^\varepsilon;\,\varepsilon\in(0,1)\}$ is bounded in $\ml^p(\Omega;C([0,T];L^p(\mt^N)))$, for all $p\in[2,\infty)$.

\begin{proof}
To verify the claim, an uniform estimate of $\stred\big(\sup_{0\leq t\leq T}\|u^\varepsilon(t)\|_{L^p(\mt^N)}^p\big)$ is needed. We repeat the approach from the preceding lemma, only for the stochastically forced term we apply the Burkholder-Davis-Gundy inequality.
We have
\begin{equation*}
\begin{split}
\stred\Big(&\sup_{0\leq t\leq T}\|u^\varepsilon(t)\|_{L^p(\mt^N)}^p\Big)\leq\stred\|u_0\|_{L^p(\mt^N)}^p+C\bigg(1+\int_0^T\stred\|u^\varepsilon(s)\|_{L^p(\mt^N)}^p\dif s\bigg)\\
&\qquad\qquad+p\,\stred\bigg(\sup_{0\leq t\leq T}\bigg|\sum_{k\geq1}\int_0^t\int_{\mt^N}|u^\varepsilon|^{p-2}u^\varepsilon g_k^\varepsilon(x,u^\varepsilon)\,\dif x\,\dif\beta_k(s)\bigg|\bigg)
\end{split}
\end{equation*}
and since we are dealing with finite-dimensional Wiener processes, the Burk\-holder-Davis-Gundy, the assumption \eqref{linrust} and the weighted Young inequality yield
\begin{equation*}
\begin{split}
\stred\bigg(\sup_{0\leq t\leq T}&\bigg|\sum_{k=1}^{\lfloor 1/\varepsilon\rfloor}\int_0^t\int_{\mt^N}|u^\varepsilon|^{p-2}u^\varepsilon g_k^\varepsilon(x,u^\varepsilon)\,\dif x\,\dif\beta_k(s)\bigg|\bigg)\\
&\leq C\,\stred\bigg(\int_0^T\bigg(\int_{\mt^N}|u^\varepsilon|^{p-1}\sum_{k= 1}^{\lfloor 1/\varepsilon\rfloor}|g_k^\varepsilon(x,u^\varepsilon)|\,\dif x\bigg)^2\dif s\bigg)^{\frac{1}{2}}\\
&\leq C\,\stred\bigg(\int_0^T\Big(1+\|u^\varepsilon\|_{L^p(\mt^N)}^p\Big)^2\dif s\bigg)^{\frac{1}{2}}\\
&\leq C\bigg(1+\stred\bigg(\int_0^T\|u^\varepsilon(s)\|_{L^p(\mt^N)}^{2p}\,\dif s\bigg)^{\frac{1}{2}}\bigg)\\
&\leq\frac{1}{2}\,\stred\Big(\sup_{0\leq t\leq T}\|u^\varepsilon(t)\|^p_{L^p(\mt^N)}\Big)+C\bigg(1+\int_0^T\stred\|u^\varepsilon(s)\|_{L^p(\mt^N)}^p\,\dif s\bigg).
\end{split}
\end{equation*}
Therefore
$$\stred\Big(\sup_{0\leq t\leq T}\|u^\varepsilon(t)\|_{L^p(\mt^N)}^p\Big)\leq C\bigg(1+\stred\|u_0\|_{L^p(\mt^N)}^p+\int_0^T\stred\|u^\varepsilon(s)\|_{L^p(\mt^N)}^p\,\dif s\bigg)$$
and the corollary follows from \eqref{energyest}.
\end{proof}

\end{cor}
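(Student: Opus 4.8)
The plan is to upgrade the pointwise-in-time bound \eqref{energyest} to a bound on $\stred\big(\sup_{t\in[0,T]}\|u^\varepsilon(t)\|_{L^p(\mt^N)}^p\big)$, which --- since by Theorem \ref{smooth} $u^\varepsilon$ is a continuous $L^p(\mt^N)$-valued process --- is precisely the $p$-th power of the norm of $u^\varepsilon$ in $\ml^p(\Omega;C([0,T];L^p(\mt^N)))$. To do this I would go back to the It\^o expansion \eqref{if1} of $\|u^\varepsilon(t)\|_{L^p(\mt^N)}^p$ and this time take the supremum over $t\in[0,T]$ \emph{before} taking expectations.

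The four deterministic contributions are handled exactly as in the proof of \eqref{energyest} and survive the passage to $\sup_t$ unchanged: the convective term vanishes since it is the integral over $\mt^N$ of a perfect divergence; the two second-order terms stemming from $\diver(A\nabla u^\varepsilon)$ and from the viscosity $\varepsilon\Delta u^\varepsilon$ are nonpositive after integration by parts; and the It\^o correction term is bounded by $C\big(1+\int_0^T\|u^\varepsilon(s)\|_{L^p(\mt^N)}^p\,\dif s\big)$ by the uniform linear growth \eqref{linrust} of $G_\varepsilon$. The genuinely new ingredient is the martingale term $M^\varepsilon(t)=p\sum_{k\ge1}\int_0^t\int_{\mt^N}|u^\varepsilon|^{p-2}u^\varepsilon g^\varepsilon_k(x,u^\varepsilon)\,\dif x\,\dif\beta_k(s)$, to which I would apply the Burkholder--Davis--Gundy inequality: $\stred\sup_{t\le T}|M^\varepsilon(t)|\le C\,\stred\big(\int_0^T\sum_{k\ge1}\big(\int_{\mt^N}|u^\varepsilon|^{p-1}|g^\varepsilon_k(x,u^\varepsilon)|\,\dif x\big)^2\dif s\big)^{1/2}$. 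Cauchy--Schwarz in $x$ together with $\sum_k|g_k^\varepsilon|^2=G_\varepsilon^2\le L(1+|\xi|^2)$ bound the right-hand side by $C\,\stred\big(\int_0^T(1+\|u^\varepsilon(s)\|_{L^p(\mt^N)}^p)^2\dif s\big)^{1/2}$, and a weighted Young inequality splits this as $\tfrac12\stred\big(\sup_{t\le T}\|u^\varepsilon(t)\|_{L^p(\mt^N)}^p\big)+C\big(1+\int_0^T\stred\|u^\varepsilon(s)\|_{L^p(\mt^N)}^p\,\dif s\big)$.

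Putting the pieces together, the $\tfrac12\,\stred\sup$-term is absorbed into the left-hand side and the surviving time integral is controlled by $CT(1+\stred\|u_0\|_{L^p(\mt^N)}^p)$ via \eqref{energyest}, which gives the asserted uniform bound since $u_0\in\ml^p(\Omega;L^p(\mt^N))$. The one subtle point --- which I regard as the main, though purely technical, obstacle --- is that absorbing the supremum into the left-hand side is only legitimate once one knows a priori that $\stred\sup_{t\le T}\|u^\varepsilon(t)\|_{L^p(\mt^N)}^p<\infty$. I would deal with this in the usual way, by first carrying out the whole estimate with $T$ replaced by the stopping times $\tau_R=\inf\{t\in[0,T]:\|u^\varepsilon(t)\|_{L^p(\mt^N)}^p>R\}$ (on $[0,\tau_R]$ the stopped stochastic integral is a true martingale, so BDG applies), obtaining a bound independent of $R$, and then letting $R\to\infty$ by monotone convergence.
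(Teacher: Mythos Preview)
Your proposal is correct and follows essentially the same route as the paper: take the supremum in \eqref{if1} before the expectation, discard the nonpositive second-order terms, bound the It\^o correction via \eqref{linrust}, apply Burkholder--Davis--Gundy to the martingale part, split off $\tfrac12\,\stred\sup$ by the weighted Young inequality, and close with \eqref{energyest}. Your extra remark about justifying the absorption step via the stopping times $\tau_R$ is a point the paper simply takes for granted; it is a legitimate refinement but not a different argument.
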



\subsection{Compactness argument}

To show that there exists $u:\Omega\times\mt^N\times[0,T]\rightarrow\mr$, a kinetic solution to \eqref{rovnice}, one needs to verify the strong convergence of the approximate solutions $u^\varepsilon$. This can be done by combining tightness of their laws with the pathwise uniqueness, which was proved above.


First, we need to prove a better spatial regularity of the approximate solutions. Towards this end, we introduce two seminorms describing the $W^{\lambda,1}$-regularity of a function $u\in L^1(\mt^N).$ Let $\lambda\in(0,1)$ and define
$$p^{\lambda}(u)=\int_{\mt^N}\int_{\mt^N}\frac{|u(x)-u(y)|}{|x-y|^{N+\lambda}}\,\dif x\,\dif y,$$
$$p^\lambda_\varrho(u)=\sup_{0<\tau<2 D_N}\frac{1}{\tau^\lambda}\int_{\mt^N}\int_{\mt^N}|u(x)-u(y)|\varrho_\tau(x-y)\,\dif x\,\dif y,$$
where $(\varrho_\tau)$ is a fixed regularizing kernel and by $D_N$ we denote the diameter of $[0,1]^N$. The fractional Sobolev space $W^{\lambda,1}(\mt^N)$ is defined as a subspace of $L^1(\mt^N)$ with finite norm
$$\|u\|_{W^{\lambda,1}(\mt^N)}=\|u\|_{L^1(\mt^N)}+p^\lambda(u).$$
According to \cite{debus}, the following relations holds true between these seminorms. Let $s\in(0,\lambda)$, there exists a constant $C=C_{\lambda,\varrho,N}$ such that for all $u\in L^1(\mt^N)$
\begin{equation}\label{seminorm}
p^\lambda_\varrho(u)\leq C p^\lambda(u),\qquad p^s\leq\frac{C}{\lambda-s}\,p^\lambda_\varrho(u).
\end{equation}

\begin{thm}[$W^{\sigma,1}$-regularity]
Set $\sigma=\min\{\frac{\alpha}{\alpha+1},\frac{1}{2}\}$, where $\alpha$ was introduced in \eqref{fceh}. Then there exists a constant $C_T>0$ such that for all $t\in[0,T]$ and all $\varepsilon\in(0,1)$
\begin{equation}\label{spatial}
\stred\, p^\sigma_\varrho\big(u^\varepsilon(t)\big)\leq C_T\big(1+\stred\, p^\sigma_\varrho(u_0)\big).
\end{equation}
In particular, for all $s\in(0,\sigma)$, there exists a constant $C_{T,s,u_0}>0$ such that for all $t\in[0,T]$
\begin{equation}\label{spatial1}
\stred\|u^\varepsilon(t)\|_{W^{s,1}(\mt^N)}\leq C_{T,s,u_0}\big(1+\stred\|u_0\|_{W^{\sigma,1}(\mt^N)}\big).
\end{equation}

\begin{proof}
Proof of this statement is based on Proposition \ref{propdubling}. We have
\begin{equation*}
\begin{split}
&\stred\int_{(\mt^N)^2}\int_\mr\varrho_\tau(x-y)f^\varepsilon(x,t,\xi)\bar{f}^\varepsilon(y,t,\xi)\,\dif\xi\,\dif x\,\dif y\\
&\leq\stred\int_{(\mt^N)^2}\int_{\mr^2}\varrho_\tau(x-y)\psi_\delta(\xi-\zeta)f^\varepsilon(x,t,\xi)\bar{f}^\varepsilon(y,t,\zeta)\,\dif\xi\,\dif\zeta\,\dif x\,\dif y+\delta\\
&\leq\stred\!\int_{(\mt^N)^2}\!\int_{\mr^2}\!\varrho_\tau(x-y)\psi_\delta(\xi-\zeta)f_{0}(x,\xi)\bar{f}_{0}(y,\zeta)\dif\xi\dif\zeta\dif x\dif y+\delta+I+J+K\!\\
&\leq\stred\int_{(\mt^N)^2}\int_\mr\varrho_\tau(x-y)f_{0}(x,\xi)\bar{f}_{0}(y,\xi)\,\dif\xi\,\dif x\,\dif y+2\delta+I+J+K.
\end{split}
\end{equation*}
From the same estimates as the ones used in the proof of Theorem \ref{uniqueness}, we conclude
\begin{equation*}
\begin{split}
&\stred\int_{(\mt^N)^2}\varrho_\tau(x-y)\big(u^\varepsilon(x,t)-u^\varepsilon(y,t)\big)^+\,\dif x\,\dif y\\
&\quad\leq\stred\int_{(\mt^N)^2}\varrho_\tau(x-y)\big(u_0(x)-u_0(y)\big)^+\,\dif x\,\dif y+2\delta+Ct\big(\delta^{-1}\tau+\delta^{-1}\tau^2+\delta^\alpha\big)
\end{split}
\end{equation*}
By optimization in $\delta$, i.e. setting $\delta=\tau^\beta$, we obtain
$$\sup_{0<\tau<2D_N}\frac{2\delta+Ct\big(\delta^{-1}\tau+\delta^{-1}\tau^2+\delta^\alpha\big)}{\tau^\sigma}\leq C_t,$$
where the maximal choice of the parameter $\sigma$ is $\min\big\{\frac{\alpha}{\alpha+1},\frac{1}{2}\big\}$ which corresponds to $\beta=\max\big\{\frac{1}{\alpha+1},\frac{1}{2}\big\}.$
Hence we deduce \eqref{spatial}. Furthermore, by \eqref{seminorm} it holds
$$\stred\,p^s(u^\varepsilon(t))\leq C_{T,s}(1+\stred\, p^\sigma(u_0))$$
and from \eqref{energyest} we obtain
$$\stred\|u^\varepsilon(t)\|_{L^1(\mt^N)}\leq\stred\|u^\varepsilon(t)\|_{L^2(\mt^N)}\leq C_T\Big(1+\big(\,\stred\|u_0\|^2_{L^2(\mt^N)}\big)^\frac{1}{2}\Big)$$
and \eqref{spatial1} follows. As a consequence of the previous estimate, the constant in \eqref{spatial1} depends on the $L^2(\Omega;L^2(\mt^N))$-norm of the initial condition.
\end{proof}

\end{thm}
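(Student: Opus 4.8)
The plan is to read \eqref{spatial} off Proposition~\ref{propdubling}, applied not to a kinetic solution of \eqref{rovnice} but to the smooth approximate solution $u^\varepsilon$ of \eqref{approx}. Since $u^\varepsilon$ is smooth and satisfies the kinetic formulation \eqref{kinetaprox}, the derivation of Proposition~\ref{propdubling} carries over verbatim with $b^\varepsilon$, $A^\varepsilon=A+\varepsilon\,\mathrm{Id}$ and the explicit kinetic measure $m^\varepsilon=n_1^\varepsilon+n_2^\varepsilon$ in place of $b$, $A$ and $m$, and moreover $f^{\varepsilon,\pm}=f^\varepsilon$. I would write this version of \eqref{doubling} for $u_1=u_2=u^\varepsilon$, with the mollifiers $\varrho=\varrho_\tau$ on $\mt^N$ and $\psi=\psi_\delta$ on $\mr$ from the proof of Theorem~\ref{uniqueness}. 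Using $\int_\mr\ind_{a>\xi}(1-\ind_{b>\xi})\,\dif\xi=(a-b)^+$ together with the elementary fact that smearing in $\xi$ by $\psi_\delta$ changes these products by at most $\delta$, this becomes
\[
\stred\!\int_{(\mt^N)^2}\!\!\varrho_\tau(x-y)\big(u^\varepsilon(x,t)-u^\varepsilon(y,t)\big)^{+}\dif x\,\dif y\ \le\ \stred\!\int_{(\mt^N)^2}\!\!\varrho_\tau(x-y)\big(u_0(x)-u_0(y)\big)^{+}\dif x\,\dif y+2\delta+\mathrm{I}+\mathrm{J}+\mathrm{K},
\]
where $\mathrm{I},\mathrm{J},\mathrm{K}$ are the three correction terms of Proposition~\ref{propdubling}.

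Next I would bound $\mathrm{I},\mathrm{J},\mathrm{K}$ exactly as in the proof of Theorem~\ref{uniqueness}. The parabolic/measure term $\mathrm{J}$ is nonpositive: the same integrations by parts in $x$ and $y$ and the DiPerna--Lions commutation argument apply again (now for $A^\varepsilon$, whose square root $\sigma^\varepsilon$ is still Lipschitz, and with Definition~\ref{kinsol}(iii) trivially true since $u^\varepsilon$ is smooth), identifying $\mathrm{J}$ with $-\stred\int_0^t\!\int|\sigma^\varepsilon(x)\nabla_xu^\varepsilon-\sigma^\varepsilon(y)\nabla_yu^\varepsilon|^2\varrho_\tau\psi_\delta$ plus a remainder vanishing as $\tau\to0$. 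The flux term $\mathrm{I}$, by the polynomial growth of $b^\varepsilon$ (uniform in $\varepsilon$) and the fact that $\nu^1=\nu^2=\delta_{u^\varepsilon=\xi}$ vanish at infinity (by \eqref{energyest}), is at most $Ct\,\delta^{-1}\tau$, and $\mathrm{K}$, by \eqref{skorolip} and \eqref{fceh}, is at most $\tfrac{Lt}{2}\delta^{-1}\tau^2+\tfrac{LtC_\psi}{2}h(\delta)\le Ct(\delta^{-1}\tau^2+\delta^\alpha)$. Thus the right-hand side above is bounded by $\stred\!\int\varrho_\tau(u_0(x)-u_0(y))^{+}\dif x\,\dif y+2\delta+Ct(\delta^{-1}\tau+\delta^{-1}\tau^2+\delta^\alpha)$.

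Dividing by $\tau^\sigma$, using the symmetry of $\varrho_\tau$ to pass to $|u^\varepsilon(x,t)-u^\varepsilon(y,t)|$ and $|u_0(x)-u_0(y)|$, and setting $\delta=\tau^\beta$, the correction $(2\delta+Ct(\delta^{-1}\tau+\delta^{-1}\tau^2+\delta^\alpha))\tau^{-\sigma}$ stays bounded on $(0,2D_N)$ exactly when $\beta-\sigma$, $1-\beta-\sigma$, $2-\beta-\sigma$ and $\alpha\beta-\sigma$ are all $\ge 0$, i.e. $\sigma\le\min\{\beta,1-\beta,\alpha\beta\}$; maximizing over $\beta$ gives the threshold $\sigma=\min\{\tfrac{\alpha}{\alpha+1},\tfrac12\}$, attained at $\beta=\max\{\tfrac1{\alpha+1},\tfrac12\}$, and this is \eqref{spatial}. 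Finally \eqref{spatial1} follows by combining \eqref{spatial} with the seminorm comparison \eqref{seminorm} (which yields a bound for $\stred\,p^s(u^\varepsilon(t))$ for every $s<\sigma$) and with \eqref{energyest} (which controls $\stred\|u^\varepsilon(t)\|_{L^1(\mt^N)}\le\stred\|u^\varepsilon(t)\|_{L^2(\mt^N)}$), so that the resulting constant depends only on $T$, $s$ and the $L^2(\Omega;L^2(\mt^N))$-norm of $u_0$.

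I expect the treatment of $\mathrm{J}$ to be the main obstacle: it requires re-running the long chain of integrations by parts and the commutator lemma from the proof of Theorem~\ref{uniqueness} in the regularized setting, checking that after replacing $A$ by $A^\varepsilon$ and $n_2$ by $\varepsilon|\nabla u^\varepsilon|^2\delta_{u^\varepsilon=\xi}$ the contribution is still nonpositive, and --- crucially for the later compactness argument --- that every constant appearing in $\mathrm{I},\mathrm{J},\mathrm{K}$ (the Lipschitz bound for $\sqrt{A^\varepsilon}$, the polynomial growth of $b^\varepsilon$, the constants in \eqref{linrust}--\eqref{fceh}) is uniform in $\varepsilon$; once that is in hand, the optimization in $\delta$ is routine.
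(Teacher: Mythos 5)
Your proposal follows the paper's proof essentially step by step: Proposition~\ref{propdubling} applied with $u_1=u_2=u^\varepsilon$ and the mollifiers $\varrho_\tau,\psi_\delta$, the bounds on $\mathrm{I}$, $\mathrm{J}$, $\mathrm{K}$ recycled from the proof of Theorem~\ref{uniqueness} (with $\mathrm{J}$ nonpositive up to $o(1)$), the optimization $\delta=\tau^\beta$ yielding $\sigma=\min\{\frac{\alpha}{\alpha+1},\frac{1}{2}\}$, and finally \eqref{seminorm} combined with \eqref{energyest} to obtain \eqref{spatial1}. The one caveat --- shared with the paper's own write-up --- is the exponent in the flux term: the computation you invoke from Theorem~\ref{uniqueness} actually gives $|\mathrm{I}|\le Ct\,\delta\tau^{-1}$ rather than $Ct\,\delta^{-1}\tau$, and since $\delta\tau^{-1}=\tau^{\beta-1}$ blows up for $\beta<1$ while $\delta^{-1}\tau=\tau^{1-\beta}$ vanishes, this term would need to be re-estimated (or the discrepancy resolved) before the stated optimization in $\beta$ goes through.
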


\begin{cor}\label{inter}
For all $\gamma\in(0,\sigma)$ and $q>1$ satisfying $\gamma q<\sigma$, there exists a constant $C>0$ such that for all $\varepsilon\in(0,1)$
\begin{equation}\label{odhad1}
\stred\|u^\varepsilon\|^q_{L^q(0,T;W^{\gamma,q}(\mt^N))}\leq C.
\end{equation}

\begin{proof}
The claim is a consequence of the bounds \eqref{energyest} and \eqref{spatial1}. Indeed, fix $s\in(0,\sigma)$ and $p\in(1,\infty)$. We will use the interpolation inequality
\begin{equation}\label{interpolace}
\|\cdot\|_{W^{\gamma,q}(\mt^N)}\leq C\|\cdot\|^{1-\theta}_{W^{\gamma_0,q_0}(\mt^N)}\|\cdot\|^\theta_{W^{\gamma_1,q_1}(\mt^N)},
\end{equation}
where $\gamma_0,\gamma_1\in\mr,\,q_0,q_1\in(0,\infty),$ $\gamma=(1-\theta)\gamma_0+\theta \gamma_1,$ $\frac{1}{q}=\frac{1-\theta}{q_0}+\frac{\theta}{q_1},$ $\theta\in(0,1).$ In particular, fix $s\in(\gamma q,\sigma)$ and set $\gamma_0=s,$ $\gamma_1=0,$ $q_0=1,$ $q_1=p$. Then we obtain $\theta=\frac{s-\gamma}{s},\,p=\frac{(s-\gamma)q}{s-\gamma q}$ and
\begin{equation*}
\begin{split}
\stred\|u^\varepsilon(t)\|^q_{W^{\gamma,q}(\mt^N)}&\leq C\,\stred\Big(\|u^\varepsilon(t)\|^{(1-\theta)q}_{W^{s,1}(\mt^N)}\|u^\varepsilon(t)\|^{\theta q}_{L^p(\mt^N)}\Big)\\
&\leq C\Big(\stred\|u^\varepsilon(t)\|_{W^{s,1}(\mt^N)}\Big)^{(1-\theta)q}\Big(\stred\|u^\varepsilon(t)\|_{L^p(\mt^N)}^p\Big)^{1-(1-\theta)q}\leq C.
\end{split}
\end{equation*}
\end{proof}

\end{cor}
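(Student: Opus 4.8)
The plan is to prove Corollary~\ref{inter} as a direct consequence of the two uniform bounds already established: the energy estimate~\eqref{energyest}, which controls $u^\varepsilon(t)$ in $L^p(\mt^N)$ uniformly in $\varepsilon$ and $t$ for every $p\in[2,\infty)$, and the $W^{s,1}$-regularity estimate~\eqref{spatial1}, which controls $u^\varepsilon(t)$ in $W^{s,1}(\mt^N)$ uniformly in $\varepsilon$ and $t$ for every $s\in(0,\sigma)$. The idea is to interpolate between these two scales of spaces to land in $W^{\gamma,q}(\mt^N)$, and then integrate in time over $[0,T]$.

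First I would fix the parameters: given $\gamma\in(0,\sigma)$ and $q>1$ with $\gamma q<\sigma$, choose $s\in(\gamma q,\sigma)$ and then pick the second exponent $p$ in the Lebesgue scale large enough; the precise relation is forced by the interpolation identities. I would invoke the standard fractional Sobolev interpolation inequality~\eqref{interpolace} with $\gamma_0=s$, $q_0=1$, $\gamma_1=0$, $q_1=p$, which by solving $\gamma=(1-\theta)s$ and $\tfrac1q=(1-\theta)+\tfrac\theta p$ gives $\theta=\tfrac{s-\gamma}{s}$ and $p=\tfrac{(s-\gamma)q}{s-\gamma q}$ (note $s>\gamma q$ ensures $p>0$, and one checks $p>1$). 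Thus pointwise in $(\omega,t)$,
\[
\|u^\varepsilon(t)\|^q_{W^{\gamma,q}(\mt^N)}\leq C\,\|u^\varepsilon(t)\|^{(1-\theta)q}_{W^{s,1}(\mt^N)}\,\|u^\varepsilon(t)\|^{\theta q}_{L^p(\mt^N)}.
\]

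Next I would take expectations and apply H\"older's inequality in $\omega$ with exponents $\tfrac{1}{(1-\theta)q}$ and $\tfrac{1}{1-(1-\theta)q}$ — here one must verify $(1-\theta)q<1$, which follows from $\theta=\tfrac{s-\gamma}{s}$ and $s<\sigma$ together with $\gamma q<\sigma$ (so that $(1-\theta)q=\tfrac{\gamma q}{s}<1$). This yields
\[
\stred\|u^\varepsilon(t)\|^q_{W^{\gamma,q}(\mt^N)}\leq C\big(\stred\|u^\varepsilon(t)\|_{W^{s,1}(\mt^N)}\big)^{(1-\theta)q}\big(\stred\|u^\varepsilon(t)\|^p_{L^p(\mt^N)}\big)^{1-(1-\theta)q}\leq C,
\]
where the final bound is uniform in $t\in[0,T]$ and $\varepsilon\in(0,1)$ by~\eqref{spatial1} and~\eqref{energyest}. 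Integrating this uniform-in-$t$ bound over $[0,T]$ and using Fubini gives $\stred\|u^\varepsilon\|^q_{L^q(0,T;W^{\gamma,q}(\mt^N))}\leq CT$, which is~\eqref{odhad1}.

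The only genuinely delicate points — rather than obstacles — are bookkeeping ones: making sure the chosen $s$ simultaneously satisfies $\gamma q<s<\sigma$ (possible exactly because $\gamma q<\sigma$), checking that the resulting interpolation exponent $p$ is admissible ($p>1$), and checking the two H\"older-admissibility conditions $(1-\theta)q<1$ and $1-(1-\theta)q>0$. All of these reduce to the single hypothesis $\gamma q<\sigma$, so no real difficulty arises; the substance of the corollary is entirely carried by the a priori estimates proved earlier.
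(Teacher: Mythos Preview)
Your proposal is correct and follows essentially the same approach as the paper: interpolate between $W^{s,1}(\mt^N)$ and $L^p(\mt^N)$ with the same parameter choices $\gamma_0=s$, $\gamma_1=0$, $q_0=1$, $q_1=p$, then apply H\"older in $\omega$ and invoke \eqref{energyest} and \eqref{spatial1}. You are in fact slightly more explicit than the paper in checking the H\"older admissibility condition $(1-\theta)q=\gamma q/s<1$ and in carrying out the final integration over $[0,T]$.
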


Also a better time regularity is needed.

\begin{lemma}\label{jjkj}
Suppose that $\lambda\in(0,1/2),\,q\in[2,\infty).$ There exists a constant $C>0$ such that for all $\varepsilon\in(0,1)$
\begin{equation}\label{odhad3}
\stred\|u^\varepsilon\|^q_{C^\lambda([0,T];H^{-2}(\mt^N))}\leq C.
\end{equation}

\begin{proof}
Let $q\in[2,\infty)$. Recall that the set $\{u^\varepsilon;\,\varepsilon\in(0,1)\}$ is bounded in
$$L^q(\Omega;C(0,T;L^q(\mt^N))).$$
Since all $B^\varepsilon$ have the same polynomial growth we conclude, in particular, that
$$\{\diver(B^\varepsilon(u^\varepsilon))\},\quad\{\diver(A(x)\nabla u^\varepsilon)\},\quad\{\varepsilon\Delta u^\varepsilon\}$$
are bounded in $L^q(\Omega;C(0,T;H^{-2}(\mt^N)))$ and consequently
$$\stred\Big\|u^\varepsilon-\int_0^\tec\varPhi^\varepsilon(u^\varepsilon)\,\dif W\Big\|^q_{C^1([0,T];H^{-2}(\mt^N))}\leq C.$$

In order to deal with the stochastic integral, let us recall the definition of the Riemann-Liouville operator: let $X$ be a Banach space, $p\in(1,\infty],\,\alpha\in(1/p,1]$ and $f\in L^p(0,T;X)$, then we define
$$\big(R_\alpha f\big)(t)=\frac{1}{\Gamma(\alpha)}\int_0^t(t-s)^{\alpha-1}f(s)\,\dif s,\qquad t\in[0,T].$$
It is well known that $R_\alpha$ is a bounded linear operator from $L^p(0,T;X)$ to the space of H\"{o}lder continuous functions $C^{\alpha-1/p}([0,T];X)$ (see e.g. \cite[Theorem 3.6]{samko}).
Assume now that $q\in(2,\infty),$ $\alpha\in(1/q,1/2)$. Then according to the stochastic Fubini theorem \cite[Theorem 4.18]{prato}
\begin{equation*}
\begin{split}
\int_0^t\varPhi^\varepsilon\big(u^\varepsilon(s)\big)\,\dif W(s)=\big(R_\alpha Z\big)(t),
\end{split}
\end{equation*}
where
\begin{equation*}
\begin{split}
Z(s)=\frac{1}{\Gamma(1-\alpha)}\int_0^s(s-r)^{-\alpha}\,\varPhi^\varepsilon\big(u^\varepsilon(r)\big)\,\dif W(r).
\end{split}
\end{equation*}
Therefore using the Burkholder-Davis-Gundy and Young inequality and the estimate \eqref{linrust}
\begin{equation*}
\begin{split}
\stred\bigg\|\int_0^\tec\varPhi^\varepsilon(u^\varepsilon)&\,\dif W\bigg\|_{C^{\alpha-1/q}([0,T];L^2(\mt^N))}^q\leq C\,\stred\|Z\|_{L^q(0,T;L^2(\mt^N))}^q\\
&\leq C\,\int_0^T\stred\bigg(\int_0^t\frac{1}{(t-s)^{2\alpha}}\|\varPhi^\varepsilon(u^\varepsilon)\|_{L_2(\mathfrak{U};L^2(\mt^N))}^2\dif s\bigg)^\frac{q}{2}\dif t\\
&\leq CT^{\frac{q}{2}(1-2\alpha)}\stred\int_0^T\Big(1+\|u^\varepsilon(s)\|_{L^2(\mt^N)}^q\Big)\dif s\\
&\leq CT^{\frac{q}{2}(1-2\alpha)}\Big(1+\|u^\varepsilon\|_{L^q(\Omega;L^q(0,T;L^2(\mt^N)))}^q\Big)\leq C
\end{split}
\end{equation*}
and the claim follows.
\end{proof}
\end{lemma}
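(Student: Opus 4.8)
The plan is to split the mild formulation of \eqref{approx} into a drift part and a stochastic part and treat each separately. First I would rewrite, for $t\in[0,T]$,
$$u^\varepsilon(t)=u_0+\int_0^t\Big(-\diver\big(B^\varepsilon(u^\varepsilon)\big)+\diver\big(A(x)\nabla u^\varepsilon\big)+\varepsilon\Delta u^\varepsilon\Big)\dif s+\int_0^t\varPhi^\varepsilon(u^\varepsilon)\dif W.$$
For the drift part, since $\{u^\varepsilon\}$ is bounded in $L^q(\Omega;C([0,T];L^q(\mt^N)))$ and all $B^\varepsilon$ share the same polynomial growth, each of the three terms $\diver(B^\varepsilon(u^\varepsilon))$, $\diver(A(x)\nabla u^\varepsilon)$, $\varepsilon\Delta u^\varepsilon$ maps $L^q(\mt^N)$ (or a space containing it) boundedly into $H^{-2}(\mt^N)$: indeed $\diver$ loses one derivative, the second-order operators lose two, and $|u^\varepsilon|^p$-type nonlinearities are controlled by the $L^q$-bound for $q$ large. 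Hence the time integral of the drift is bounded in $L^q(\Omega;C^1([0,T];H^{-2}(\mt^N)))$ uniformly in $\varepsilon$, which embeds into $C^\lambda([0,T];H^{-2}(\mt^N))$ for any $\lambda<1$.

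For the stochastic convolution the key device is the factorisation (Da Prato–Kwapień–Zabczyk) via the Riemann–Liouville operator $R_\alpha$. Fixing $\alpha\in(1/q,1/2)$, I would write $\int_0^t\varPhi^\varepsilon(u^\varepsilon)\dif W=(R_\alpha Z)(t)$ with $Z(s)=\frac{1}{\Gamma(1-\alpha)}\int_0^s(s-r)^{-\alpha}\varPhi^\varepsilon(u^\varepsilon(r))\dif W(r)$, invoke the stochastic Fubini theorem to justify the identity, and use the boundedness of $R_\alpha:L^q(0,T;L^2(\mt^N))\to C^{\alpha-1/q}([0,T];L^2(\mt^N))$. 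It then remains to estimate $\stred\|Z\|_{L^q(0,T;L^2(\mt^N))}^q$: by Burkholder–Davis–Gundy, the Hilbert–Schmidt bound coming from \eqref{linrust}, and the integrability of $(t-s)^{-2\alpha}$ on $[0,t]$ (valid because $2\alpha<1$), one bounds this by $C(1+\|u^\varepsilon\|_{L^q(\Omega;L^q(0,T;L^2(\mt^N)))}^q)$, which is finite uniformly in $\varepsilon$ by the energy estimate. Since $L^2(\mt^N)\hookrightarrow H^{-2}(\mt^N)$ and $\alpha-1/q>0$, combining with the drift estimate gives \eqref{odhad3} for some $\lambda\in(0,\alpha-1/q)$; choosing $q$ large and $\alpha$ close to $1/2$ covers every $\lambda\in(0,1/2)$ and every $q\in[2,\infty)$, the case $q=2$ following from the case $q>2$ by Hölder in $\omega$.

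The main obstacle is the stochastic term: one must choose $\alpha$ in the nonempty window $(1/q,1/2)$ — which forces $q>2$ at this stage and explains the two-step treatment of the exponent — and carefully verify the hypotheses of the stochastic Fubini theorem and of the mapping property of $R_\alpha$ (in particular that the integrand $Z$ indeed lies in $L^q(0,T;L^2(\mt^N))$ $\prst$-a.s., which is exactly what the BDG computation delivers). Everything else is a routine bookkeeping of how many derivatives each term in the equation costs, together with the already-established uniform bound in $L^q(\Omega;C([0,T];L^q(\mt^N)))$.
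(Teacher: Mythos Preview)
Your proposal is correct and follows essentially the same route as the paper: the same drift/stochastic splitting, the same $H^{-2}$ bounds on the three drift terms via the uniform $L^q(\Omega;C([0,T];L^q(\mt^N)))$ estimate, and the same factorisation argument $\int_0^t\varPhi^\varepsilon(u^\varepsilon)\,\dif W=(R_\alpha Z)(t)$ with $\alpha\in(1/q,1/2)$, followed by Burkholder--Davis--Gundy and \eqref{linrust}. Your additional remarks (the embedding $L^2(\mt^N)\hookrightarrow H^{-2}(\mt^N)$, covering all $\lambda\in(0,1/2)$ by letting $q$ grow and $\alpha\uparrow 1/2$, and recovering $q=2$ from $q>2$ by H\"older in $\omega$) make explicit what the paper leaves implicit in the phrase ``and the claim follows.''
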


\begin{cor}\label{jjk}
For all $\vartheta>0$ there exist $\beta>0$ and $C>0$ such that for all $\varepsilon\in(0,1)$
\begin{equation}\label{odhad4}
\stred\|u^\varepsilon\|_{C^\beta([0,T];H^{-\vartheta}(\mt^N))}\leq C.
\end{equation}

\begin{proof}
The proof follows easily from interpolation between \eqref{odhad3} and \eqref{energyest}.
\end{proof}
\end{cor}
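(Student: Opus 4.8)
The plan is to deduce the claimed H\"older estimate \eqref{odhad4} by interpolating the two bounds already at our disposal: the sharp time-regularity estimate \eqref{odhad3} in the weak space $H^{-2}(\mt^N)$, uniform in $\varepsilon$, and the energy bound \eqref{energyest}, which gives a uniform bound on $\stred\|u^\varepsilon\|^q_{C([0,T];L^2(\mt^N))}$ (here I use that $L^2(\mt^N)=H^0(\mt^N)$). The point is that $C^\lambda([0,T];H^{-2})$-control together with $C([0,T];H^0)$-control upgrades, by a standard interpolation inequality for vector-valued H\"older spaces, to $C^\beta([0,T];H^{-\vartheta})$-control for a suitable $\beta>0$ once $\vartheta\in(0,2)$; and for $\vartheta\geq 2$ the statement follows a fortiori from \eqref{odhad3} since $H^{-2}\hookrightarrow H^{-\vartheta}$. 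So it suffices to treat $\vartheta\in(0,2)$.

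First I would fix $q\in[2,\infty)$ large and $\lambda\in(0,1/2)$, so that \eqref{odhad3} holds. For a fixed path, write $w=u^\varepsilon$ and estimate, for $0\le t_1<t_2\le T$, the increment $\|w(t_2)-w(t_1)\|_{H^{-\vartheta}}$. By the interpolation inequality for Sobolev spaces on $\mt^N$, $\|v\|_{H^{-\vartheta}}\leq C\|v\|_{H^0}^{1-\theta}\|v\|_{H^{-2}}^{\theta}$ with $\theta=\vartheta/2\in(0,1)$. Applying this to $v=w(t_2)-w(t_1)$ gives
\begin{equation*}
\|w(t_2)-w(t_1)\|_{H^{-\vartheta}}\leq C\big(\|w(t_2)\|_{H^0}+\|w(t_1)\|_{H^0}\big)^{1-\theta}\,\big([w]_{C^\lambda([0,T];H^{-2})}|t_2-t_1|^{\lambda}\big)^{\theta},
\end{equation*}
so that, setting $\beta=\lambda\theta=\lambda\vartheta/2$,
\begin{equation*}
[w]_{C^\beta([0,T];H^{-\vartheta})}\leq C\,\|w\|_{C([0,T];H^0)}^{1-\theta}\,[w]_{C^\lambda([0,T];H^{-2})}^{\theta}.
\end{equation*}
Combining this with the trivial bound $\|w\|_{C([0,T];H^{-\vartheta})}\leq C\|w\|_{C([0,T];H^0)}$ yields a bound on the full norm $\|w\|_{C^\beta([0,T];H^{-\vartheta})}$.

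It then remains to take $L^q(\Omega)$-norms. Applying H\"older's inequality in $\omega$ with exponents $1/(1-\theta)$ and $1/\theta$ to the product $\|w\|_{C([0,T];H^0)}^{1-\theta}[w]_{C^\lambda([0,T];H^{-2})}^{\theta}$, one gets
\begin{equation*}
\stred\|u^\varepsilon\|_{C^\beta([0,T];H^{-\vartheta})}\leq C\Big(\stred\|u^\varepsilon\|_{C([0,T];H^0)}^{q}\Big)^{\frac{1-\theta}{q}}\Big(\stred\|u^\varepsilon\|_{C^\lambda([0,T];H^{-2})}^{q}\Big)^{\frac{\theta}{q}}\leq C,
\end{equation*}
where the last inequality uses \eqref{energyest} (equivalently the Corollary asserting boundedness in $L^q(\Omega;C([0,T];L^q))$, hence in $L^q(\Omega;C([0,T];L^2))$) and \eqref{odhad3}, both uniform in $\varepsilon$. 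This proves \eqref{odhad4} for $\vartheta\in(0,2)$ with $\beta=\lambda\vartheta/2$ for any admissible $\lambda<1/2$, and the case $\vartheta\geq 2$ is immediate from \eqref{odhad3}. I do not expect any real obstacle here; the only mild point of care is bookkeeping the exponents so that the $\omega$-integrability matches (which is why one wants \eqref{odhad3} with the same $q$ as the energy estimate), and recalling that the vector-valued H\"older seminorm interpolates in exactly the same way as the pointwise norms because the estimate is applied increment-by-increment.
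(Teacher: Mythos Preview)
Your proof is correct and follows exactly the approach the paper indicates: interpolate the $C^\lambda([0,T];H^{-2})$ bound \eqref{odhad3} against the $C([0,T];L^2)$ bound coming from the energy estimate (more precisely, from the corollary to \eqref{energyest} giving boundedness in $L^q(\Omega;C([0,T];L^q(\mt^N)))$). The paper merely says ``interpolation between \eqref{odhad3} and \eqref{energyest}'' without details; you have supplied those details accurately, including the case split $\vartheta\geq 2$ versus $\vartheta\in(0,2)$, the increment-wise interpolation yielding $\beta=\lambda\vartheta/2$, and the H\"older step in $\omega$.
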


\begin{cor}
Suppose that $\kappa\in(0,\frac{\sigma}{2(4+\sigma)})$. There exists a constant $C>0$ such that for all $\varepsilon\in(0,1)$
\begin{equation}\label{odhad2}
\stred\|u^\varepsilon\|_{H^{\kappa}(0,T;L^2(\mt^N))}\leq C.
\end{equation}

\begin{proof}
It follows from Lemma \ref{jjkj} that
\begin{equation}\label{odhad5}
\stred\|u^\varepsilon\|^q_{H^{\lambda}(0,T;H^{-2}(\mt^N))}\leq C,
\end{equation}
where $\lambda\in(0,1/2),\,q\in[1,\infty)$. Let $\gamma\in(0,\sigma/2)$. If $\kappa=\theta\lambda\,$ and $\,0=-2\theta+(1-\theta)\gamma$ then it follows by the interpolation and the H\"{o}lder inequality
\begin{equation*}
\begin{split}
\stred\|u^\varepsilon\|_{H^{\kappa}(0,T;L^2(\mt^N))}&\leq C\,\stred\Big(\|u^\varepsilon\|_{H^\lambda(0,T;H^{-2}(\mt^N))}^\theta\|u^\varepsilon\|_{L^2(0,T;H^\gamma(\mt^N))}^{1-\theta}\Big)\\
&\leq C\Big(\stred\|u^\varepsilon\|_{H^\lambda(0,T;H^{-2}(\mt^N))}^{\theta p}\Big)^{\frac{1}{p}}\Big(\stred\|u^\varepsilon\|_{L^2(0,T;H^\gamma(\mt^N))}^{(1-\theta)r}\Big)^\frac{1}{r},
\end{split}
\end{equation*}
where the exponent $r$ is chosen in order to satisfy $(1-\theta)r=2$. The proof now follows from \eqref{odhad1} and \eqref{odhad5}.
\end{proof}
\end{cor}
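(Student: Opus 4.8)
The plan is to deduce \eqref{odhad2} by interpolating in the time variable between two bounds already at our disposal: one with good temporal but negative spatial regularity, coming from Lemma~\ref{jjkj}, and one with no temporal but positive spatial regularity, coming from Corollary~\ref{inter}.

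First I would rewrite the two ingredients in a convenient form. Specializing \eqref{odhad1} to $q=2$ and to an arbitrary $\gamma\in(0,\sigma/2)$ (for which the constraint $\gamma q<\sigma$ holds), and recalling $W^{\gamma,2}(\mt^N)=H^\gamma(\mt^N)$, gives
\[
\stred\|u^\varepsilon\|^2_{L^2(0,T;H^\gamma(\mt^N))}\leq C,\qquad\varepsilon\in(0,1).
\]
For the other ingredient I would combine \eqref{odhad3} with the embedding $C^{\lambda'}([0,T];X)\hookrightarrow H^{\lambda}(0,T;X)$, valid on the bounded interval $[0,T]$ whenever $\lambda<\lambda'$ (estimate the Gagliardo seminorm by the H\"{o}lder seminorm); since Lemma~\ref{jjkj} holds for every exponent in $(0,1/2)$ and every $q\in[1,\infty)$, this yields
\[
\stred\|u^\varepsilon\|^q_{H^{\lambda}(0,T;H^{-2}(\mt^N))}\leq C,\qquad\lambda\in(0,1/2),\;q\in[1,\infty),\;\varepsilon\in(0,1).
\]

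The core step is the interpolation. Using the standard interpolation identity for vector-valued Sobolev scales, $[H^{\lambda}(0,T;H^{-2}(\mt^N)),\,L^2(0,T;H^\gamma(\mt^N))]_{\theta}=H^{\theta\lambda}(0,T;H^{-2\theta+(1-\theta)\gamma}(\mt^N))$, I would choose $\theta=\gamma/(\gamma+2)\in(0,1)$ so that the spatial exponent $-2\theta+(1-\theta)\gamma$ vanishes, and set $\kappa=\theta\lambda$. This produces, for each fixed $\omega$,
\[
\|u^\varepsilon\|_{H^{\kappa}(0,T;L^2(\mt^N))}\leq C\,\|u^\varepsilon\|^{\theta}_{H^{\lambda}(0,T;H^{-2}(\mt^N))}\,\|u^\varepsilon\|^{1-\theta}_{L^2(0,T;H^\gamma(\mt^N))}.
\]
Taking expectations and applying H\"{o}lder's inequality on $\Omega$ with conjugate exponents $p,r$ chosen so that $(1-\theta)r=2$ turns the last factor into $\big(\stred\|u^\varepsilon\|^2_{L^2(0,T;H^\gamma)}\big)^{1/r}$, controlled by the first ingredient, while the remaining factor $\big(\stred\|u^\varepsilon\|^{\theta p}_{H^{\lambda}(0,T;H^{-2})}\big)^{1/p}$ is controlled by the second ingredient, which is available in every $L^q(\Omega)$. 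Letting $\gamma\uparrow\sigma/2$ and $\lambda\uparrow1/2$ then shows that $\kappa=\theta\lambda$ sweeps out the interval $\big(0,\tfrac{\sigma}{2(4+\sigma)}\big)$, exactly as claimed.

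This is essentially bookkeeping of interpolation exponents, so I do not anticipate a serious obstacle. The two points that require a little care are (a) the vector-valued interpolation identity on a bounded time interval, where one reduces to the identity on $\mr$ via an extension operator in $t$, and (b) the upgrade of the H\"{o}lder-in-time estimate of Lemma~\ref{jjkj} to a fractional Sobolev-in-time estimate; both are classical.
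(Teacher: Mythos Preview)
Your proposal is correct and follows essentially the same route as the paper: obtain the $H^{\lambda}(0,T;H^{-2})$ bound from Lemma~\ref{jjkj}, take the $L^2(0,T;H^\gamma)$ bound from Corollary~\ref{inter} with $q=2$, interpolate with $\kappa=\theta\lambda$ and $0=-2\theta+(1-\theta)\gamma$, and then apply H\"older in $\omega$ with $(1-\theta)r=2$. You merely spell out a few details the paper leaves implicit (the embedding $C^{\lambda'}\hookrightarrow H^{\lambda}$, the explicit value $\theta=\gamma/(\gamma+2)$, and the range computation for $\kappa$); note only that your bracket notation $[A_0,A_1]_\theta$ uses the nonstandard convention with weight $\theta$ on $A_0$, though the interpolation inequality you actually write is the correct one.
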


Now, we have all in hand to show tightness of the collection $\{\mu^\varepsilon;\,\varepsilon\in(0,1)\}$ in $\mathcal{X}.$ Let us define the path space $\mathcal{X}=\mathcal{X}_u\times\mathcal{X}_W$, where
\begin{equation}\label{pathspace}
\mathcal{X}_u=L^1\big(0,T;L^1(\mt^N)\big)\cap C\big([0,T];H^{-1}(\mt^N)\big),\qquad\mathcal{X}_W=C\big([0,T];\mathfrak{U}_0\big).
\end{equation}
For all $\varepsilon\in(0,1)$ we denote by $\mu_{u^\varepsilon}$ the law of $u^\varepsilon$ on $\mathcal{X}_u$ and by $\mu_W$ the law of $W$ on $\mathcal{X}_W$. Their joint law on $\mathcal{X}$ is then denoted by $\mu^\varepsilon=\mu_{u^\varepsilon}\otimes\mu_W$.

\begin{thm}\label{tight}
The set $\{\mu^\varepsilon;\,\varepsilon\in(0,1)\}$ is tight and therefore relatively weakly compact in $\mathcal{X}.$
\begin{proof}
%

First, we employ an Aubin-Dubinskii type compact embedding theorem which, in our setting, reads (see \cite{lions1} for a general exposition; the proof of the following version can be found in \cite{fland}):
$$L^2(0,T;H^{\gamma}(\mt^N))\cap H^{\kappa}(0,T;L^2(\mt^N))\overset{c}{\hookrightarrow}L^2(0,T;L^2(\mt^N))\hookrightarrow L^1(0,T;L^1(\mt^N)).$$
For $R>0$ we define the set
\begin{multline*}
B_{1,R}=\{u\in L^2(0,T;H^{\gamma}(\mt^N))\cap H^{\kappa}(0,T;L^2(\mt^N));\\
\|u\|_{L^2(0,T;H^{\gamma}(\mt^N))}+\|u\|_{H^{\kappa}(0,T;L^2(\mt^N))}\leq R\}
\end{multline*}
which is thus compact in $L^1(0,T;L^1(\mt^N))$. Moreover, by \eqref{odhad1} and \eqref{odhad2}
\begin{equation*}
\begin{split}
\mu_{u^\varepsilon}\big(B_{1,R}^C\big)&\leq\prst\bigg(\|u^\varepsilon\|_{L^2(0,T;H^{\gamma}(\mt^N))}>\frac{R}{2}\bigg)+\prst\bigg(\|u^\varepsilon\|_{H^{\kappa}(0,T;L^2(\mt^N))}>\frac{R}{2}\bigg)\\
&\leq\frac{2}{R}\Big(\stred\|u^\varepsilon\|_{L^2(0,T;H^{\gamma}(\mt^N))}+\stred\|u^\varepsilon\|_{H^{\kappa}(0,T;L^2(\mt^N))}\Big)\leq\frac{C}{R}.
\end{split}
\end{equation*}
In order to prove tightness in $C([0,T];H^{-1}(\mt^N))$ we employ the compact embedding
$$C^\beta([0,T];H^{-\vartheta}(\mt^N))\overset{c}{\hookrightarrow}C^{\tilde{\beta}}([0,T];H^{-1}(\mt^N))\hookrightarrow C([0,T];H^{-1}(\mt^N)),$$
where $\tilde{\beta}<\beta,\,0<\vartheta<1.$
Define
$$B_{2,R}=\{u\in C^\beta([0,T];H^{-\vartheta}(\mt^N));\,\|u\|_{C^\beta([0,T];H^{-\vartheta}(\mt^N))}\leq R\}$$
then by \eqref{odhad4}
\begin{equation*}
\begin{split}
\mu_{u^\varepsilon}\big(B_{2,R}^C\big)\leq\frac{1}{R}\stred\|u^\varepsilon\|_{C^\beta([0,T];H^{-\vartheta}(\mt^N))}\leq\frac{C}{R}.
\end{split}
\end{equation*}
Let $\eta>0$ be given. Then, since $B_R=B_{1,R}\cap B_{2,R}$ is compact in $\mathcal{X}_u$ and for some suitably chosen $R>0$ it holds true
$$\mu_{u^\varepsilon}(B_R)\geq 1-\frac{\eta}{2},$$
we obtain the tightness of $\{\mu_{u^\varepsilon};\,\varepsilon\in(0,1)\}$.
Since also the law $\mu_W$ is tight as being a Radon measure on the Polish space $\mathcal{X}_W$, there exists a compact set $C_\eta\subset\mathcal{X}_W$ such that
$\mu_W(C_\eta)\geq 1-\frac{\eta}{2}.$
We conclude that $B_R\times C_\eta$ is compact in $\mathcal{X}$ and $\mu^\varepsilon(B_R\times C_\eta)\geq 1-\eta$. Thus, $\{\mu^\varepsilon;\,\varepsilon\in(0,1)\}$ is tight in
$\mathcal{X}$
and Prokhorov's theorem therefore implies that it is also relatively weakly compact.
\end{proof}
\end{thm}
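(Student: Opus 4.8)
The plan is to verify the Prokhorov criterion directly: for every $\eta>0$ I will produce a compact subset of $\mathcal{X}$ whose $\mu^\varepsilon$-mass is at least $1-\eta$, uniformly in $\varepsilon$. Since $\mathcal{X}=\mathcal{X}_u\times\mathcal{X}_W$ and the second marginal $\mu_W$ is a single Radon probability measure on the Polish space $\mathcal{X}_W$, tightness of $\{\mu_W\}$ is automatic; hence everything reduces to establishing tightness of the first marginals $\{\mu_{u^\varepsilon};\,\varepsilon\in(0,1)\}$ on $\mathcal{X}_u=L^1(0,T;L^1(\mt^N))\cap C([0,T];H^{-1}(\mt^N))$.

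For the $L^1(0,T;L^1(\mt^N))$ factor I would invoke an Aubin--Dubinskii type compact embedding: with $\gamma,\kappa$ chosen in the admissible ranges of Corollary~\ref{inter} and of the estimate \eqref{odhad2}, one has $L^2(0,T;H^{\gamma}(\mt^N))\cap H^{\kappa}(0,T;L^2(\mt^N))\overset{c}{\hookrightarrow}L^2(0,T;L^2(\mt^N))\hookrightarrow L^1(0,T;L^1(\mt^N))$, so the ball $B_{1,R}$ of radius $R$ in that intersection space is relatively compact in $\mathcal{X}_u$; the uniform bounds \eqref{odhad1} and \eqref{odhad2} together with the Chebyshev inequality then give $\mu_{u^\varepsilon}(B_{1,R}^{c})\leq C/R$ uniformly in $\varepsilon$. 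For the $C([0,T];H^{-1}(\mt^N))$ factor I would use the analogous embedding $C^\beta([0,T];H^{-\vartheta}(\mt^N))\overset{c}{\hookrightarrow}C([0,T];H^{-1}(\mt^N))$, valid for $0<\vartheta<1$ by Arzel\`a--Ascoli together with the compactness of $H^{-\vartheta}(\mt^N)\hookrightarrow H^{-1}(\mt^N)$; balls $B_{2,R}$ in $C^\beta([0,T];H^{-\vartheta}(\mt^N))$ are then relatively compact in $\mathcal{X}_u$, and by \eqref{odhad4} one again gets $\mu_{u^\varepsilon}(B_{2,R}^{c})\leq C/R$.

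Combining the two, $B_R:=B_{1,R}\cap B_{2,R}$ is compact in $\mathcal{X}_u$, and choosing $R$ large we obtain $\mu_{u^\varepsilon}(B_R)\geq 1-\eta/2$ for every $\varepsilon$. Taking in addition a compact set $C_\eta\subset\mathcal{X}_W$ with $\mu_W(C_\eta)\geq 1-\eta/2$, the product $B_R\times C_\eta$ is compact in $\mathcal{X}$ and $\mu^\varepsilon(B_R\times C_\eta)\geq 1-\eta$ for all $\varepsilon$. Thus $\{\mu^\varepsilon;\,\varepsilon\in(0,1)\}$ is tight, and Prokhorov's theorem gives relative weak compactness in $\mathcal{X}$.

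The part I expect to be delicate is not the union-bound bookkeeping but making sure the functional-analytic inputs line up: one must pick exponents $(\gamma,\kappa)$ and $(\beta,\vartheta)$ for which the uniform stochastic estimates of the preceding corollaries genuinely hold \emph{and} the two embeddings above are genuinely compact. Concretely, the only time-regularity the stochastic integral readily provides lives in a negative-order space such as $H^{-2}(\mt^N)$, and it has to be interpolated against the spatial $W^{\gamma,q}$-gain so as to land simultaneously in $L^2(0,T;H^\gamma)\cap H^\kappa(0,T;L^2)$ and in $C^\beta([0,T];H^{-\vartheta})$ with $\vartheta<1$; checking that these two demands are mutually compatible is the crux, after which the argument is routine.
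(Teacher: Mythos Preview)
Your proposal is correct and follows essentially the same route as the paper: the same Aubin--Dubinskii embedding together with \eqref{odhad1}, \eqref{odhad2} for the $L^1(0,T;L^1)$ factor, the same $C^\beta([0,T];H^{-\vartheta})\hookrightarrow C([0,T];H^{-1})$ compactness together with \eqref{odhad4} for the other factor, intersection of the two balls, and the product with a compact set carrying $\mu_W$. The only slip is a phrasing issue---$B_{1,R}$ is relatively compact in $L^1(0,T;L^1(\mt^N))$, not in all of $\mathcal{X}_u$, and likewise for $B_{2,R}$; it is only their intersection $B_R$ that is compact in $\mathcal{X}_u$, which you do use correctly in the end.
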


Passing to a weakly convergent subsequence $\mu^n=\mu^{\varepsilon_n}$ (and denoting by $\mu$ the limit law) we now apply the Skorokhod representation theorem to infer the following proposition.

\begin{prop}\label{skorokhod}
There exists a probability space $(\tilde{\Omega},\tilde{\mf},\tilde{\prst})$ with a sequence of $\mathcal{X}$-valued random variables $(\tilde{u}^n,\tilde{W}^n),\,n\in\mn,$ and $(\tilde{u},\tilde{W})$ such that
\begin{enumerate}
 \item the laws of $(\tilde{u}^n,\tilde{W}^n)$ and $(\tilde{u},\tilde{W})$ under $\,\tilde{\prst}$ coincide with $\mu^n$ and $\mu$, respectively,
 \item $(\tilde{u}^n,\tilde{W}^n)$ converges $\,\tilde{\prst}$-almost surely to $(\tilde{u},\tilde{W})$ in the topology of $\mathcal{X}$,
\end{enumerate}

\end{prop}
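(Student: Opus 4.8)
The plan is to obtain Proposition \ref{skorokhod} as an immediate consequence of the Skorokhod representation theorem; after Theorem \ref{tight} the only preparatory work is to verify that the path space is a suitable one and that the laws in question are genuine Borel probability measures on it. First I would record that $\res=\res_u\times\res_W$ is a Polish space: the factor $\res_W=C([0,T];\mathfrak{U}_0)$ is a separable Banach space because $\mathfrak{U}_0$ is a separable Hilbert space, while $\res_u=L^1(0,T;L^1(\mt^N))\cap C([0,T];H^{-1}(\mt^N))$, equipped with the sum of the two norms, embeds isometrically via $u\mapsto(u,u)$ into the separable Banach space $L^1(0,T;L^1(\mt^N))\times C([0,T];H^{-1}(\mt^N))$ and is therefore itself separable and complete. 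In particular every finite Borel measure on $\res$ is Radon, which is all that the representation theorem requires.

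Next I would observe that the estimates \eqref{odhad1}, \eqref{odhad2}, \eqref{odhad4} place $u^\varepsilon$ almost surely in $\res_u$, so that $\mu_{u^\varepsilon}$, $\mu_W$ and $\mu^\varepsilon=\mu_{u^\varepsilon}\otimes\mu_W$ are well-defined Borel probability measures on $\res_u$, $\res_W$ and $\res$ respectively, Borel measurability of the relevant random variables being automatic since these spaces are separable. By Theorem \ref{tight} and Prokhorov's theorem the family $\{\mu^\varepsilon\}$ admits a weakly convergent subsequence; let $\mu^n=\mu^{\varepsilon_n}$ be the one fixed just before the statement, with weak limit $\mu$. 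Applying the Skorokhod representation theorem to $\mu^n\rightharpoonup\mu$ on the Polish space $\res$ then produces a probability space $(\tilde\Omega,\tilde\mf,\tilde\prst)$ carrying $\res$-valued random variables $(\tilde u^n,\tilde W^n)$, $n\in\mn$, and $(\tilde u,\tilde W)$ with $\mathrm{Law}(\tilde u^n,\tilde W^n)=\mu^n$, $\mathrm{Law}(\tilde u,\tilde W)=\mu$ and $(\tilde u^n,\tilde W^n)\to(\tilde u,\tilde W)$ $\tilde\prst$-a.s. in $\res$; this is precisely assertions (i)--(ii).

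I do not expect a genuine obstacle at this step: all the real work sits in the uniform bounds and in the tightness statement Theorem \ref{tight}, and from there the argument is a textbook invocation. The only points I would be careful to state, rather than prove here, are that the limit law $\mu$ need not be of product form, and --- more substantively --- that the newly constructed driving processes $\tilde W^n$, $\tilde W$ still have to be identified as cylindrical Wiener processes relative to appropriate filtrations, with $\tilde u^n$ solving the corresponding approximate problem driven by $\tilde W^n$; this identification, being the genuinely delicate part, is deferred to the propositions that follow.
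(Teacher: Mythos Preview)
Your proposal is correct and matches the paper's approach: the paper simply states that the proposition follows by applying the Skorokhod representation theorem to the weakly convergent subsequence obtained from Theorem \ref{tight}, without spelling out the verification that $\mathcal{X}$ is Polish. Your additional care in checking that $\mathcal{X}_u$ and $\mathcal{X}_W$ are separable complete metric spaces, and your remark that the identification of $\tilde W$ as a cylindrical Wiener process is deferred to later propositions, are entirely appropriate and in line with how the paper proceeds.
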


Note, that we can assume, without loss of generality, that the $\sigma$-algebra $\tilde{\mf}$ is countably generated. This fact will be used later on for the application of the Banach-Alaoglu theorem.

\begin{rem}
It should be noted that the energy estimates remain valid also for the candidate solution $\tilde{u}$. Indeed, let $p\in[2,\infty)$
\begin{equation*}
\begin{split}
\tilde{\stred}\sup_{0\leq t\leq T}\|\tilde{u}(t)\|^p_{L^p(\mt^N)}&\leq\liminf_{n\rightarrow\infty}\,\tilde{\stred}\sup_{0\leq t\leq T}\|\tilde{u}^n(t)\|^p_{L^p(\mt^N)}\\
&=\liminf_{n\rightarrow\infty}\,\stred\sup_{0\leq t\leq T}\|u^n(t)\|^p_{L^p(\mt^N)}\leq C.
\end{split}
\end{equation*}
\end{rem}

Finally, let $(\tilde{\mf}_t)$ be the $\tilde{\prst}$-augmented canonical filtration of the process $(\tilde{u},\tilde{W})$, that is
$$\tilde{\mf}_t=\sigma\big(\sigma\big(\varrho_t\tilde{u},\varrho_t\tilde{W}\big)\cup\big\{N\in\tilde{\mf};\;\tilde{\prst}(N)=0\big\}\big),\quad t\in[0,T],$$
where $\varrho_t$ denotes the operator of restriction to the interval $[0,t]$.

\subsection{Passage to the limit}\label{passage}

In this paragraph we provide the technical details of the identification of the limit process with a kinetic solution. The technique performed here will be used also in the proof of existence of a pathwise kinetic solution.
\begin{thm}\label{pass}
The triple
$\big((\tilde{\Omega},\tilde{\mf},(\tilde{\mf}_t),\tilde{\prst}),\tilde{W},\tilde{u}\big)$
is a martingale kinetic solution to the problem \eqref{rovnice}.
\end{thm}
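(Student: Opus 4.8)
The plan is to verify, one by one, each defining property of a martingale kinetic solution for the limit object $(\tilde u,\tilde W)$ obtained from Proposition \ref{skorokhod}, transferring information from the approximate solutions $u^\varepsilon$ via equality of laws and $\tilde{\prst}$-a.s. convergence in $\mathcal X$. First I would record the structural facts that come for free: by Proposition \ref{skorokhod}(i) the law of $\tilde u^n$ equals that of $u^n$, so the energy estimates \eqref{energyest}, the corollary on $L^p(\Omega;C([0,T];L^p))$-bounds, and the $W^{\sigma,1}$-regularity \eqref{spatial1} all pass to $\tilde u^n$; Fatou (as in the Remark after Proposition \ref{skorokhod}) then gives the corresponding bounds for $\tilde u$, which yields property (ii) of Definition \ref{kinsol} and, with the a.s. convergence $\tilde u^n\to\tilde u$ in $L^1(0,T;L^1(\mt^N))$, lets one extract a further subsequence converging a.e. on $\tilde\Omega\times\mt^N\times[0,T]$. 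Predictability of $\tilde u$ with respect to $(\tilde{\mf}_t)$ (property (i)) follows because $\tilde u$ has a.s. continuous trajectories in $H^{-1}$ (from $\mathcal X_u$) and is $(\tilde{\mf}_t)$-adapted by construction of the canonical filtration. That $\tilde W^n,\tilde W$ are cylindrical Wiener processes with respect to $(\tilde{\mf}^n_t),(\tilde{\mf}_t)$ is a standard consequence of equality of laws plus a Lévy-type characterization (the increments $\tilde W(t)-\tilde W(s)$ are independent of $\tilde{\mf}_s$ with the right Gaussian law).

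Next I would handle the kinetic equation itself. The idea is to pass to the limit in the weak formulation \eqref{kinetaprox} satisfied by $f^\varepsilon=\ind_{u^\varepsilon>\xi}$ — or rather in its version on the new probability space, which holds $\tilde{\prst}$-a.s. because it holds $\prst$-a.s. and both sides are measurable functionals of $(u^\varepsilon,W)$ — tested against $\varphi\in C_c^\infty(\mt^N\times[0,T]\times\mr)$. By Lemma \ref{kinetcomp} applied on $X=\tilde\Omega\times\mt^N\times[0,T]$ together with the uniform moment bounds, $\tilde f^n=\ind_{\tilde u^n>\xi}$ converges weak-$*$ (along a subsequence) to a kinetic function $\tilde f$, and since $\tilde u^n\to\tilde u$ a.e. one identifies $\tilde f=\ind_{\tilde u>\xi}$ so that $\nu_{x,t}=\delta_{\tilde u(x,t)=\xi}$. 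The linear drift, second-order and $\varepsilon\Delta$ terms converge by this weak-$*$ convergence and the uniform convergence $b^\varepsilon\to b$, $B^\varepsilon\to B$ on compact $\xi$-sets (the $\varepsilon\Delta\tilde f^n$ term vanishes since $\varepsilon\langle \tilde f^n,\Delta\varphi\rangle\to 0$); the $\frac12 G^2$ term converges by dominated convergence using $g_k^\varepsilon\to g_k$ and \eqref{linrust}. The stochastic term requires the martingale identification technique advertised in the introduction (Propositions \ref{ident1}, \ref{ident2}): one shows that the process defined by moving all deterministic terms of \eqref{kinetaprox} to one side is a square-integrable $(\tilde{\mf}_t)$-martingale whose quadratic variation matches $\sum_k\int_0^t\big(\int g_k(x,\tilde u)\varphi(x,s,\tilde u)\,\dif x\big)^2\dif s$ and whose cross-variation with $\tilde\beta_k$ matches $\int_0^t\int g_k(x,\tilde u)\varphi(x,s,\tilde u)\,\dif x\,\dif s$, and concludes that it equals $\sum_k\int_0^t\int g_k(x,\tilde u)\varphi(x,s,\tilde u)\,\dif x\,\dif\tilde\beta_k(s)$.

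The remaining and most delicate point is the kinetic measure. Writing \eqref{kinetaprox} for $\tilde u^n$, the term $\langle m^n,\partial_\xi\varphi\rangle([0,t))$ with $m^n=n_1^n+n_2^n\geq 0$ is controlled uniformly: testing against a fixed $\varphi$ with $\partial_\xi\varphi$ of one sign, or more cleanly using the $L^1$-bound on $(\nabla u^\varepsilon)^*A(\nabla u^\varepsilon)+\varepsilon|\nabla u^\varepsilon|^2$ that drops out of the It\^o computation with $f(v)=\|v\|_{L^2}^2$ (the dissipation term there is exactly $2\tilde{\stred}\int_0^T\!\int_{\mt^N}\big(|\sigma\nabla u^\varepsilon|^2+\varepsilon|\nabla u^\varepsilon|^2\big)$), one gets $\tilde{\stred}\,\tilde m^n(\mt^N\times[0,T]\times\mr)\leq C$ uniformly in $n$. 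Hence the $\tilde m^n$, viewed as random measures, are bounded in $L^2(\tilde\Omega;\mathcal M_b)$ (or one works with expectations and uses that the nonnegative measures $\tilde{\stred}\tilde m^n$ are uniformly bounded and tight on $\mt^N\times[0,T]\times B_R$ after using \eqref{infinity}-type tail control), so along a subsequence $\tilde m^n\to \tilde m$ weakly-$*$ to a nonnegative measure; one must check the three defining properties of a kinetic measure (measurability, decay at $\xi=\infty$ uniformly transferred from the uniform tail bound, predictability — the last coming from the fact that each $\langle \tilde m^n,\cdot\rangle([0,t))$ is $(\tilde{\mf}_t)$-adapted and the limit inherits this). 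Then $\tilde m$ is passed into the limit equation. Finally one must split $\tilde m = \tilde n_1+\tilde n_2$ with $\tilde n_1=(\nabla\tilde u)^*A(\nabla\tilde u)\delta_{\tilde u=\xi}$: this needs weak lower semicontinuity, namely that $\sigma\nabla u^n \to \sigma\nabla\tilde u$ weakly in $L^2$ (which follows from the uniform $L^2$-bound on $\sigma\nabla u^\varepsilon$ from property (iii)/the energy identity and a.e. convergence of $u^n$), whence $\tilde n_1\leq \liminf n_1^n$ as measures and $\tilde n_2:=\tilde m-\tilde n_1\geq 0$. This lower-semicontinuity/measure-splitting step — making rigorous that the parabolic dissipation survives the limit as a bona fide nonnegative remainder while $n_1$ is exactly the quadratic form of the weak-limit gradient — is the main obstacle; everything else is a careful but routine application of weak-$*$ compactness, Lemma \ref{kinetcomp}, and the martingale identification lemmas.
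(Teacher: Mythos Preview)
Your proposal is correct and follows essentially the same route as the paper: Lemma \ref{weakstar} for the weak-$*$ convergence of $\tilde f^n$ and $\tilde m^n$ (including the lower-semicontinuity splitting $\tilde m=\tilde n_1+\tilde n_2$ via weak convergence of $\sigma\nabla\tilde u^n$), Proposition \ref{ident1} for the Wiener property of $\tilde W$ via L\'evy's characterization, and Proposition \ref{ident2} for the martingale/quadratic-variation identification of the stochastic integral. The only imprecision is the inference from the $L^1$-bound $\tilde{\stred}\,\tilde m^n(\mt^N\times[0,T]\times\mr)\leq C$ to boundedness in $L^2(\tilde\Omega;\mathcal M_b)$ --- the paper gets the $L^2$-bound by squaring the It\^o identity \emph{before} taking expectation and controlling the martingale term with the It\^o isometry.
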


Let us define
$$f^n=\ind_{u^n>\xi},\quad\tilde{f}^n=\ind_{\tilde{u}^n>\xi},\quad\tilde{f}=\ind_{\tilde{u}>\xi},$$
$$m^n=n_1^n+n_2^n=\big(\nabla u^n\big)^*A(x)\big(\nabla u^n\big)\delta_{u^n=\xi}+\varepsilon_n\big|\nabla u^n\big|^2\delta_{u^n=\xi},$$
$$\tilde{m}^n=\tilde{n}_1^n+\tilde{n}_2^n=\big(\nabla \tilde{u}^n\big)^*A(x)\big(\nabla \tilde{u}^n\big)\delta_{\tilde{u}^n=\xi}+\varepsilon_n\big|\nabla \tilde{u}^n\big|^2\delta_{\tilde{u}^n=\xi}.$$
Let $\mathcal{M}_b$ denote the space of bounded Borel measures over $\mt^N\times[0,T]\times\mr$, i.e. the dual space of $C_b$, the set of continuous bounded functions on $\mt^N\times[0,T]\times\mr$.

\begin{lemma}\label{weakstar}
It holds true (up to subsequences)
\begin{enumerate}
 \item $\tilde{f}^n\overset{w^*}{\longrightarrow}\tilde{f}\;\quad\text{in}\quad L^\infty(\tilde{\Omega}\times\mt^N\times[0,T]\times\mr)\text{-weak}^*,$
 \item there exists a kinetic measure $\tilde{m}$ such that
\begin{equation}\label{konverg}
\tilde{m}^n\overset{w^*}{\longrightarrow}\tilde{m}\;\quad\text{in}\quad L^2(\tilde{\Omega};\mathcal{M}_b)\text{-weak}^*.
\end{equation}
Moreover, $\tilde{m}$ can be rewritten as $\,\tilde{n}_1+\tilde{n}_2$, where $\,\tilde{n}_1=(\nabla \tilde{u})^*A(x)(\nabla \tilde{u})\delta_{\tilde{u}=\xi}$ and $\tilde{n}_2$ is almost surely a nonnegative measure over $\,\mt^N\times[0,T]\times\mr$.
\end{enumerate}

\begin{proof}
According to Proposition \ref{skorokhod}, there exists a set $\Sigma\subset\tilde{\Omega}\times\mt^N\times[0,T]$ of full measure and a subsequence still denoted by $\{\tilde{u}^n;\,n\in\mn\}$ such that $\tilde{u}^n(\omega,x,t)\rightarrow\tilde{u}(\omega,x,t)$ for all $(\omega,x,t)\in\Sigma$. We infer that
\begin{equation}\label{c}
\ind_{\tilde{u}^n(\omega,x,t)>\xi}\longrightarrow\ind_{\tilde{u}(\omega,x,t)>\xi}
\end{equation}
whenever
$$\Big(\tilde{\prst}\otimes\mathcal{L}_{\,\mt^N}\otimes\mathcal{L}_{[0,T]}\Big)\big\{(\omega,x,t)\in\Sigma;\,\tilde{u}(\omega,x,t)=\xi\big\}=0,$$
where by $\mathcal{L}_{\,\mt^N},\,\mathcal{L}_{[0,T]}$ we denoted the Lebesque measure on $\mt^N$ and $[0,T]$, respectively.
However, the set
$$D=\Big\{\xi\in\mr;\,\Big(\tilde{\prst}\otimes\mathcal{L}_{\,\mt^N}\otimes\mathcal{L}_{[0,T]}\Big)\big(\tilde{u}=\xi\big)>0\Big\}$$
is at most countable since we deal with finite measures. To obtain a contradiction, suppose that $D$ is uncountable and denote
$$D_k=\Big\{\xi\in\mr;\,\Big(\tilde{\prst}\otimes\mathcal{L}_{\,\mt^N}\otimes\mathcal{L}_{[0,T]}\Big)\big(\tilde{u}=\xi\big)>\frac{1}{k}\Big\},\quad k\in\mn.$$
Then $D=\cup_{k\in\mn}D_k$ is a countable union so there exists $k_0\in\mn$ such that $D_{k_0}$ is uncountable. Hence
\begin{equation*}
\begin{split}
\Big(\tilde{\prst}\otimes\mathcal{L}_{\,\mt^N}\otimes\mathcal{L}_{[0,T]}\Big)&\big(\tilde{u}\in D\big)\geq\Big(\tilde{\prst}\otimes\mathcal{L}_{\,\mt^N}\otimes\mathcal{L}_{[0,T]}\Big)\big(\tilde{u}\in D_{k_0}\big)\\
&=\sum_{\xi\in D_{k_0}}\Big(\tilde{\prst}\otimes\mathcal{L}_{\,\mt^N}\otimes\mathcal{L}_{[0,T]}\Big)\big(\tilde{u}=\xi\big)>\sum_{\xi\in D_{k_0}}\frac{1}{k_0}=\infty
\end{split}
\end{equation*}
and the desired contradiction follows.
We conclude that the convergence in \eqref{c} holds true for a.e. $(\omega,x,t,\xi)$ and obtain (i) by the dominated convergence theorem.

As the next step we shall show that the set $\{\tilde{m}^n;\;n\in\mn\}$ is bounded in $\ml^2(\tilde{\Omega};\mathcal{M}_b)$. Indeed, with regard to the computations used in proof of the energy inequality, we get from \eqref{if1}
\begin{equation*}
\begin{split}
\int_0^T\!\int_{\mt^N}\!&\big(\nabla u^n\big)^*\!A(x)\big(\nabla u^n\big)\dif x\,\dif t+\varepsilon_n\int_0^T\!\int_{\mt^N}\!\big|\nabla u^n\big|^2\dif x\,\dif t\leq C\|u_0\|_{L^2(\mt^N)}^2\\
&+C\sum_{k\geq 1}\int_0^T\!\int_{\mt^N}u^ng_k^n(x,u^n)\dif x\,\dif \beta_k(t)+C\int_0^T\!\int_{\mt^N}G_k^2(x,u^n)\dif x\,\dif s.
\end{split}
\end{equation*}
Taking square and expectation and finally by the It\^o isometry, we deduce
\begin{equation*}
\begin{split}
\tilde{\stred}\big|\tilde{m}^n&(\mt^N\times[0,T]\times\mr)\big|^2=\stred\big|m^n(\mt^N\times[0,T]\times\mr)\big|^2\\
\,&=\stred\bigg|\int_0^T\!\int_{\mt^N}\!\big(\nabla u^n\big)^*\!A(x)\big(\nabla u^n\big)\dif x\,\dif t+\varepsilon_n\!\int_0^T\!\int_{\mt^N}\!\big|\nabla u^n\big|^2\dif x\,\dif t\bigg|^2\!\leq C.
\end{split}
\end{equation*}
Thus, according to the Banach-Alaoglu theorem, \eqref{konverg} is obtained (up to subsequence). However, it still remains to show that the weak* limit $\tilde{m}$ is actually a kinetic measure.
First, we conclude from \eqref{energyest}
$$\tilde{\stred}\int_{\mt^N\times[0,T]\times\mr}|\xi|^{p-2}\dif \tilde{m}^n(x,t,\xi)\leq C$$
so for all $k>0$
\begin{equation*}
\begin{split}
\tilde{\stred}\int_{\mt^N\times[0,T]\times\mr}\min&\big(|\xi|^{p-2},k\big)\dif \tilde{m}(x,t,\xi)\\
&=\lim_{n\rightarrow\infty}\int_{\mt^N\times[0,T]\times\mr}\min\big(|\xi|^{p-2},k\big)\dif \tilde{m}^n(x,t,\xi)\leq C
\end{split}
\end{equation*}
and consequently
$$\tilde{\stred}\int_{\mt^N\times[0,T]\times\mr}\dif \tilde{m}^n(x,t,\xi)\leq C.$$
Considering this fact and taking $\phi_R=\ind_{|\xi|\geq R}$ we have by the dominated convergence theorem
$$\lim_{R\rightarrow\infty}\tilde{\stred} \tilde{m} \big(\mt^N\times[0,T]\times B_R^c\big)=\tilde{\stred}\int_{\mt^N\times[0,T]\times\mr}\lim_{R\rightarrow\infty}\phi_R(x,t,\xi)\dif \tilde{m}(x,t,\xi)=0,$$
where $B_R^c=\{\xi\in\mr;\;|\xi|\geq R\}$. Hence $\tilde{m}$ vanishes for large $\xi$.
For predictability of
$$t\longmapsto\int_{\mt^N\times[0,t]\times\mr}\psi(x,\xi)\dif \tilde{m}(x,s,\xi),\qquad\psi\in\mc_b(\mt^N\times\mr),$$
the same arguments as in \cite{debus} can be used.

Finally, by the same approach as above, we deduce that there exist kinetic measures $\tilde{o}_1,\,\tilde{o}_2\,$ such that
$$\tilde{n}_1^n\overset{w^*}{\longrightarrow}\tilde{o}_1,\quad\tilde{n}_2^n\overset{w^*}{\longrightarrow}\tilde{o}_2\qquad\text{in}\quad L^2(\tilde{\Omega};\mathcal{M}_b)\text{-weak}^*.$$
Recall, that by $\sigma(x)$ we denoted the square-root matrix of $A(x)$. Then from \eqref{if1} we obtain
$$\tilde{\stred}\int_0^T\int_{\mt^N}\big(\nabla \tilde{u}^n\big)^*A(x)\big(\nabla \tilde{u}^n\big)\,\dif x\,\dif t=\tilde{\stred}\int_0^T\int_{\mt^N}\big|\sigma(x)(\nabla \tilde{u}^n)\big|^2\dif x\,\dif t\leq C$$
hence application of the Banach-Alaoglu theorem yields that, up to subsequence, $\sigma(x)(\nabla \tilde{u}^n)$ converges weakly in $L^2(\tilde{\Omega}\times\mt^N\times[0,T])$. On the other hand, from the strong convergence given by Proposition \ref{skorokhod}, we conclude using integration by parts, for all $\psi\in C^1(\mt^N\times[0,T])$,
$$\int_0^T\int_{\mt^N}\sigma(x)(\nabla \tilde{u}^n)\psi(x,t)\,\dif x\,\dif t\longrightarrow\int_0^T\int_{\mt^N}\sigma(x)(\nabla \tilde{u})\psi(x,t)\,\dif x\,\dif t,\quad\tilde{\prst}\text{-a.s.}.$$
Therefore
$$\sigma(x)(\nabla \tilde{u}^n)\overset{w}{\longrightarrow}\sigma(x)(\nabla \tilde{u}),\quad\text{ in }\;L^2(\mt^N\times[0,T]),\quad\tilde{\prst}\text{-a.s.}.$$
Since any norm is weakly sequentially lower semicontinuous, it follows for all $\varphi\in C_b(\mt^N\times[0,T]\times\mr)$ and fixed $\xi\in\mr$, $\tilde{\prst}$-a.s.,
\begin{equation*}
\int_0^T\!\int_{\mt^N}\!\!\big|\sigma(x)(\nabla \tilde{u})\big|^2\varphi^2(x,t,\xi)\,\dif x\,\dif t\leq\liminf_{n\rightarrow\infty}\!\int_0^T\!\int_{\mt^N}\!\!\big|\sigma(x)(\nabla \tilde{u}^n)\big|^2\varphi^2(x,t,\xi)\dif x\dif t
\end{equation*}
and by the Fatou lemma
\begin{equation*}
\begin{split}
\int_0^T\int_{\mt^N}\int_{\mr}\big|&\sigma(x)(\nabla \tilde{u})\big|^2\varphi^2(x,t,\xi)\,\dif\delta_{\tilde{u}=\xi}\,\dif x\,\dif t\\
&\leq\liminf_{n\rightarrow\infty}\int_0^T\int_{\mt^N}\int_{\mr}\big|\sigma(x)(\nabla \tilde{u}^n)\big|^2\varphi^2(x,t,\xi)\,\dif\delta_{\tilde{u}^n=\xi}\,\dif x\,\dif t,\quad\tilde{\prst}\text{-a.s.}.
\end{split}
\end{equation*}
In other words, this gives $\tilde{n}_1=(\nabla \tilde{u})^*A(x)(\nabla\tilde{u})\delta_{\tilde{u}=\xi}\leq \tilde{o}_1\,\;\tilde{\prst}$-a.s. hence $\tilde{n}_2=\tilde{o}_2+(\tilde{o}_1-\tilde{n}_1)$ is $\tilde{\prst}$-a.s. a nonnegative measure
and the proof is complete.
\end{proof}

\end{lemma}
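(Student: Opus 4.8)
The plan is to treat the two assertions separately, extracting a single subsequence that serves both. For (i), I would start from Proposition~\ref{skorokhod}: the almost sure convergence $\tilde{u}^n\to\tilde{u}$ in $\mathcal{X}_u$, hence in $L^1(0,T;L^1(\mt^N))$, yields along a subsequence the pointwise convergence $\tilde{u}^n(\omega,x,t)\to\tilde{u}(\omega,x,t)$ for $(\tilde{\prst}\otimes\mathcal{L}_{\mt^N}\otimes\mathcal{L}_{[0,T]})$-a.e.\ $(\omega,x,t)$. At every such point $\ind_{\tilde{u}^n>\xi}\to\ind_{\tilde{u}>\xi}$ whenever $\tilde{u}(\omega,x,t)\neq\xi$, so it remains to show that the level $\{\tilde{u}=\xi\}$ is negligible for a.e.\ $\xi$. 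The key observation is that the set of $\xi$ for which $(\tilde{\prst}\otimes\mathcal{L}_{\mt^N}\otimes\mathcal{L}_{[0,T]})(\tilde{u}=\xi)>0$ is at most countable, since otherwise decomposing this set according to the size of the atoms would produce an uncountable family of values each carrying mass bounded below, contradicting finiteness of the measure. Consequently \eqref{c} holds for a.e.\ $(\omega,x,t,\xi)$, and since the functions are uniformly bounded by one, the dominated convergence theorem upgrades this to the desired $L^\infty$-weak$^*$ convergence $\tilde{f}^n\overset{w^*}{\longrightarrow}\tilde{f}$, proving (i).

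For (ii), the first step is a uniform bound. Returning to the It\^o expansion \eqref{if1} with $p=2$, the total mass $m^n(\mt^N\times[0,T]\times\mr)$ equals $\int_0^T\!\int_{\mt^N}(\nabla u^n)^*A(x)(\nabla u^n)\,\dif x\,\dif t+\varepsilon_n\int_0^T\!\int_{\mt^N}|\nabla u^n|^2\,\dif x\,\dif t$, which after squaring, taking expectation and invoking the It\^o isometry together with \eqref{linrust} is bounded in $L^2(\Omega)$; as the laws of $\tilde{u}^n$ and $u^n$ coincide, the same bound holds for $\tilde{m}^n$ in $L^2(\tilde{\Omega};\mathcal{M}_b)$. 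Because $\tilde{\mf}$ is countably generated, the predual is separable and the Banach--Alaoglu theorem furnishes a weak$^*$ limit $\tilde{m}$ along a further subsequence, which is \eqref{konverg}. To verify that $\tilde{m}$ is genuinely a kinetic measure I would use the higher-moment bound $\tilde{\stred}\int|\xi|^{p-2}\,\dif\tilde{m}^n\leq C$ coming from \eqref{energyest}; passing to the limit against the truncations $\min(|\xi|^{p-2},k)$ and letting $k\to\infty$ controls the tails, so that testing with $\ind_{|\xi|\geq R}$ and dominated convergence yield \eqref{infinity}, while measurability and predictability follow exactly as in \cite{debus}.

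The substantive part is the decomposition. Applying the same extraction to each piece, I obtain weak$^*$ limits $\tilde{o}_1,\tilde{o}_2$ of $\tilde{n}_1^n,\tilde{n}_2^n$, whence $\tilde{m}=\tilde{o}_1+\tilde{o}_2$. The point I expect to be the main obstacle is identifying the parabolic dissipation of the limit, namely showing $\tilde{n}_1:=(\nabla\tilde{u})^*A(x)(\nabla\tilde{u})\delta_{\tilde{u}=\xi}\leq\tilde{o}_1$ almost surely; the difficulty is that $\tilde{o}_1$ is only a weak limit and a priori need not coincide with the natural dissipation of $\tilde{u}$. To handle this I would first show that $\sigma(x)\nabla\tilde{u}^n\overset{w}{\longrightarrow}\sigma(x)\nabla\tilde{u}$ in $L^2(\mt^N\times[0,T])$, $\tilde{\prst}$-a.s.: the uniform $L^2$-bound on $\sigma(x)\nabla\tilde{u}^n$ provides a weak limit, and the strong convergence $\tilde{u}^n\to\tilde{u}$ identifies it via integration by parts against test functions. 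Then, for fixed $\xi$ and any $\varphi\in C_b$, weak sequential lower semicontinuity of the $L^2$-norm gives $\int_0^T\!\int_{\mt^N}|\sigma(x)\nabla\tilde{u}|^2\varphi^2\leq\liminf_n\int_0^T\!\int_{\mt^N}|\sigma(x)\nabla\tilde{u}^n|^2\varphi^2$, and an application of Fatou's lemma against the Dirac masses $\delta_{\tilde{u}^n=\xi}$ transfers this inequality to the measures, yielding $\tilde{n}_1\leq\tilde{o}_1$. Finally I set $\tilde{n}_2:=\tilde{o}_2+(\tilde{o}_1-\tilde{n}_1)$, which is $\tilde{\prst}$-a.s.\ nonnegative by the inequality just established, completing the proof.
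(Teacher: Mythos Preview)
Your proposal is correct and follows the paper's proof essentially step for step: the pointwise convergence plus atom-counting argument for (i), the $L^2(\tilde{\Omega};\mathcal{M}_b)$ bound via \eqref{if1} and Banach--Alaoglu for \eqref{konverg}, the truncation $\min(|\xi|^{p-2},k)$ to verify vanishing at infinity, and the identification $\tilde{n}_1\leq\tilde{o}_1$ through weak convergence of $\sigma(x)\nabla\tilde{u}^n$ combined with lower semicontinuity and Fatou all match the paper's own argument.
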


Let us define for all $t\in[0,T]$ and some fixed $\varphi\in C_c^\infty(\mt^N\times\mr)$
\begin{equation*}
\begin{split}
M^n(t)&=\big\langle f^n(t),\varphi\big\rangle-\big\langle f_0,\varphi\big\rangle-\int_0^t\big\langle f^n(s),b^n(\xi)\cdotp\nabla\varphi\big\rangle\,\dif s\,\\
&\quad-\int_0^t\Big\langle f^n(s),\sum_{i,j=1}^N\partial_{x_j}\big( A_{ij}(x)\partial_{x_i}\varphi\big)\Big\rangle\,\dif s-\varepsilon_n\int_0^t\big\langle f^n(s),\Delta\varphi\big\rangle\,\dif s\,\\
&\quad-\frac{1}{2}\int_0^t\big\langle \delta_{u^n=\xi}G^2_n,\partial_\xi\varphi\big\rangle\,\dif s+\big\langle m^n,\partial_\xi\varphi\big\rangle([0,t)),\qquad n\in\mn,
\end{split}
\end{equation*}
\begin{equation*}
\begin{split}
\tilde{M}^n(t)&=\big\langle \tilde{f}^n(t),\varphi\big\rangle-\big\langle f_0,\varphi\big\rangle-\int_0^t\big\langle \tilde{f}^n(s),b^n(\xi)\cdotp\nabla\varphi\big\rangle\,\dif s\,\\
&\quad-\int_0^t\Big\langle \tilde{f}^n(s),\sum_{i,j=1}^N\partial_{x_j}\big( A_{ij}(x)\partial_{x_i}\varphi\big)\Big\rangle\,\dif s-\varepsilon_n\int_0^t\big\langle \tilde{f}^n(s),\Delta\varphi\big\rangle\,\dif s\,\\
&\quad-\frac{1}{2}\int_0^t\big\langle \delta_{\tilde{u}^n=\xi}G^2_n,\partial_\xi\varphi\big\rangle\,\dif s+\big\langle\tilde{m}^n,\partial_\xi\varphi\big\rangle([0,t)),\qquad n\in\mn,
\end{split}
\end{equation*}
\begin{equation*}
\begin{split}
\tilde{M}(t)&=\big\langle \tilde{f}(t),\varphi\big\rangle-\big\langle f_0,\varphi\big\rangle-\int_0^t\big\langle \tilde{f}(s),b(\xi)\cdotp\nabla\varphi\big\rangle\,\dif s\,\\
&\quad-\int_0^t\Big\langle \tilde{f}(s),\sum_{i,j=1}^N\partial_{x_j}\big( A_{ij}(x)\partial_{x_i}\varphi\big)\Big\rangle\,\dif s-\frac{1}{2}\int_0^t\big\langle \delta_{\tilde{u}=\xi}G^2,\partial_\xi\varphi\big\rangle\,\dif s\\
&\quad+\big\langle\tilde{m},\partial_\xi\varphi\big\rangle([0,t)).
\end{split}
\end{equation*}
The proof of Theorem \ref{pass} is an immediate consequence of the following two propositions.

\begin{prop}\label{ident1}
The process $\tilde{W}$ is a $(\tilde{\mf}_t)$-cylindrical Wiener process, i.e. there exists a collection of mutually independent real-valued $(\tilde{\mf}_t)$-Wiener processes $\{\tilde{\beta}_k\}_{k\geq1}$ such that $\tilde{W}=\sum_{k\geq1}\tilde{\beta}_k e_k.$

\begin{proof}
Hereafter, times $s,t\in[0,T],\,s\leq t,$ and a continuous function
$$\gamma:C\big([0,s];H^{-1}(\mt^N)\big)\times C\big([0,s];\mathfrak{U}_0\big)\longrightarrow [0,1]$$
will be fixed but otherwise arbitrary and by $\varrho_s$ we denote the operator of restriction to the interval $[0,s]$.

Obviously, $\tilde{W}$ is a $\mathfrak{U_0}$-valued cylindrical Wiener process and is $(\tilde{\mf}_t)$-adap\-ted. According to the L\'evy martingale characterization theorem, it remains to show that it is also a $(\tilde{\mf}_t)$-martingale.
It holds true
$$\tilde{\stred}\,\gamma\big(\varrho_s \tilde{u}^n,\varrho_s\tilde{W}^n\big)\big[\tilde{W}^n(t)-\tilde{W}^n(s)\big]=\stred\,\gamma\big(\varrho_s u^n,\varrho_s W\big)\big[W(t)-W(s)\big]=0$$
since $W$ is a martingale and the laws of $(\tilde{u}^n,\tilde{W}^n)$ and $(u^n,W)$ coincide.
Next, the uniform estimate
$$\sup_{n\in\mn}\tilde{\stred}\|\tilde{W}^n(t)\|_{\mathfrak{U}_0}^2=\sup_{n\in\mn}\stred\|W(t)\|^2_{\mathfrak{U}_0}<\infty$$
and the Vitali convergence theorem yields
$$\tilde{\stred}\,\gamma\big(\varrho_s\tilde{u},\varrho_s\tilde{W}\big)\big[\tilde{W}(t)-\tilde{W}(s)\big]=0$$
which finishes the proof.
\end{proof}

\end{prop}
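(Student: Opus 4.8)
The plan is to apply the L\'evy martingale characterization theorem. By Proposition \ref{skorokhod}(i) the process $\tilde W$ has the same law on $\mathcal{X}_W=C([0,T];\mathfrak{U}_0)$ as the genuine cylindrical Wiener process $W$, so it is itself a $\mathfrak{U}_0$-valued cylindrical Wiener process relative to its own filtration; writing $\tilde\beta_k=(\tilde W,e_k)_{\mathfrak{U}}$ for its components (read off the $\mathfrak{U}_0$-valued path through the canonical pairing), the family $\{\tilde\beta_k\}_{k\ge1}$ consists of mutually independent standard real Brownian motions and $\langle\!\langle\tilde\beta_k,\tilde\beta_l\rangle\!\rangle_t=\delta_{kl}\,t$. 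Since $\tilde W$ is evidently $(\tilde{\mf}_t)$-adapted and quadratic covariation is a pathwise object, hence insensitive to the enlargement of the filtration, the only point left to establish is that each $\tilde\beta_k$, equivalently $\tilde W$ itself, is an $(\tilde{\mf}_t)$-martingale; L\'evy's theorem then yields the claim.

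To verify the martingale property, fix $0\le s\le t\le T$ and an arbitrary bounded continuous functional $\gamma$ on $C([0,s];H^{-1}(\mt^N))\times C([0,s];\mathfrak{U}_0)$. As $\tilde{\mf}_s$ is generated, up to $\tilde{\prst}$-null sets, by $(\varrho_s\tilde u,\varrho_s\tilde W)$, it suffices to show
$$\tilde{\stred}\,\gamma(\varrho_s\tilde u,\varrho_s\tilde W)\big[\tilde W(t)-\tilde W(s)\big]=0 .$$
First I would record the corresponding identity for the approximating sequence: because $W$ is an $(\mf_t)$-martingale and $u^n$ is $(\mf_t)$-adapted, while $(\tilde u^n,\tilde W^n)$ has the same law on $\mathcal{X}$ as $(u^n,W)$,
$$\tilde{\stred}\,\gamma(\varrho_s\tilde u^n,\varrho_s\tilde W^n)\big[\tilde W^n(t)-\tilde W^n(s)\big]=\stred\,\gamma(\varrho_s u^n,\varrho_s W)\big[W(t)-W(s)\big]=0 .$$

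It then remains to pass to the limit $n\to\infty$. By Proposition \ref{skorokhod}(ii), $(\tilde u^n,\tilde W^n)\to(\tilde u,\tilde W)$ $\,\tilde{\prst}$-almost surely in $\mathcal{X}$, so by continuity of $\gamma$ and of evaluation at fixed times the integrands converge $\tilde{\prst}$-almost surely; the expectations converge by the Vitali convergence theorem, the needed uniform integrability coming from the uniform bound $\sup_n\tilde{\stred}\|\tilde W^n(t)\|_{\mathfrak{U}_0}^2=\sup_n\stred\|W(t)\|_{\mathfrak{U}_0}^2<\infty$ together with the boundedness of $\gamma$. This gives the vanishing of the conditional expectation displayed above and completes the argument.

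The only genuinely delicate step is this last passage to the limit: the increments $\tilde W^n(t)-\tilde W^n(s)$ are controlled merely in $L^2$, uniformly in $n$, so dominated convergence is not available and one must argue uniform integrability and invoke Vitali. Everything else is bookkeeping --- identifying $\tilde{\mf}_s$, unwinding the pairing that defines $\tilde\beta_k$, and using that the Skorokhod construction preserves the joint laws.
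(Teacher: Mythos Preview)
Your proposal is correct and follows essentially the same approach as the paper: both reduce the claim to the L\'evy characterization, verify the $(\tilde{\mf}_t)$-martingale property by testing against bounded continuous functionals $\gamma$ of $(\varrho_s\tilde u,\varrho_s\tilde W)$, transfer the identity to the original space via equality of laws, and pass to the limit using almost sure convergence together with the Vitali convergence theorem based on the uniform bound $\sup_n\tilde{\stred}\|\tilde W^n(t)\|_{\mathfrak{U}_0}^2<\infty$. Your write-up is slightly more explicit about why Vitali rather than dominated convergence is needed and about how the $\tilde\beta_k$ are extracted, but the argument is the same.
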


\begin{prop}\label{ident2}
The processes
$$\tilde{M},\qquad\tilde{M}^2-\sum_{k\geq1}\int_0^\tec\big\langle \delta_{\tilde{u}=\xi}\,g_k,\varphi\big\rangle^2\,\dif r,\qquad\tilde{M}\tilde{\beta}_k-\int_0^\tec\big\langle\delta_{\tilde{u}=\xi}\, g_k,\varphi\big\rangle\,\dif r$$
are $(\tilde{\mf}_t)$-martingales.

\begin{proof}
Here, we use the same approach and notation as the one used in the previous lemma. Let us denote by $\tilde{\beta}^n_k,\;k\geq1$ the real-valued Wiener processes corresponding to $\tilde{W}^n$, that is $\tilde{W}^n=\sum_{k\geq1}\tilde{\beta}_k^n e_k$. For all $n\in\mn$, the process
$$M^n=\int_0^\tec\big\langle\delta_{u^n=\xi}\,\varPhi^n(u^n)\dif W,\varphi\big\rangle=\sum_{k\geq1}\int_0^\tec\big\langle\delta_{u^n=\xi}\, g_k^n,\varphi\big\rangle\,\dif\beta_k(r)$$
is a square integrable $(\mf_t)$-martingale by \eqref{linrust} and by the fact that the set $\{u^n;\;n\in\mn\}$ is bounded in $L^2(\Omega;L^2(0,T;L^2(\mt^N)))$. Therefore
$$(M^n)^2-\sum_{k\geq1}\int_0^\tec\big\langle\delta_{u^n=\xi}\, g_k^n,\varphi\big\rangle^2\,\dif r,\qquad M^n\beta_k-\int_0^\tec\big\langle\delta_{u^n=\xi}\, g_k^n,\varphi\big\rangle\,\dif r$$
are $(\mf_t)$-martingales and this implies together with the equality of laws
\begin{equation}\label{exp1}
\begin{split}
\tilde{\stred}\,\gamma\big(\varrho_s \tilde{u}^n,\varrho_s\tilde{W}^n\big)\big[\tilde{M}^n(t)-\tilde{M}^n(s)\big]=\stred\,\gamma\big(\varrho_s u^n,\varrho_s W\big)\big[M^n(t)-M^n(s)\big]=0,
\end{split}
\end{equation}
\begin{equation}\label{exp2}
\begin{split}
&\tilde{\stred}\,\gamma\big(\varrho_s \tilde{u}^n,\varrho_s\tilde{W}^n\big)\bigg[(\tilde{M}^n)^2(t)-(\tilde{M}^n)^2(s)-\sum_{k\geq1}\int_s^t\big\langle\delta_{\tilde{u}^n=\xi}\, g_k^n,\varphi\big\rangle^2\,\dif r\bigg]\\
=\,&\stred\,\gamma\big(\varrho_s u^n,\varrho_s W\big)\bigg[(M^n)^2(t)-(M^n)^2(s)-\sum_{k\geq1}\int_s^t\big\langle\delta_{u^n=\xi}\, g_k^n,\varphi\big\rangle^2\,\dif r\bigg]=0,
\end{split}
\end{equation}
\begin{equation}\label{exp3}
\begin{split}
&\tilde{\stred}\,\gamma\big(\varrho_s \tilde{u}^n,\varrho_s\tilde{W}^n\big)\bigg[\tilde{M}^n(t)\tilde{\beta}_k^n(t)-\tilde{M}^n(s)\tilde{\beta}_k^n(s)-\int_s^t\big\langle \delta_{\tilde{u}^n=\xi}\,g_k^n,\varphi\big\rangle\,\dif r\bigg]\\
=\,&\stred\,\gamma\big(\varrho_s u^n,\varrho_s W\big)\bigg[M^n(t)\beta_k(t)-M^n(s)\beta_k(s)-\int_s^t\big\langle\delta_{u^n=\xi}\, g_k^n,\varphi\big\rangle\,\dif r\bigg]=0.
\end{split}
\end{equation}
Moreover, the expectations in \eqref{exp1}-\eqref{exp3} converge by the Vitali convergence theorem. Indeed, all terms are uniformly integrable by \eqref{linrust} and \eqref{energyest} and converge $\tilde{\prst}$-a.s. (after extracting a subsequence) due to Lemma \ref{weakstar}, Proposition \ref{skorokhod} and the construction of $\varPhi^\varepsilon,\,B^\varepsilon$.
Hence
\begin{equation*}
\begin{split}
&\tilde{\stred}\,\gamma\big(\varrho_s\tilde{u},\varrho_s\tilde{W}\big)\big[\tilde{M}(t)-\tilde{M}(s)\big]=0,\\
&\tilde{\stred}\,\gamma\big(\varrho_s\tilde{u},\varrho_s\tilde{W}\big)\bigg[\tilde{M}^2(t)-\tilde{M}^2(s)-\sum_{k\geq1}\int_s^t\big\langle\delta_{\tilde{u}=\xi}\, g_k,\varphi\big\rangle^2\dif r\bigg]=0,\\
&\tilde{\stred}\,\gamma\big(\varrho_s\tilde{u},\varrho_s\tilde{W}\big)\bigg[\tilde{M}(t)\tilde{\beta}_k(t)-\tilde{M}(s)\tilde{\beta}_k(s)-\int_s^t\big\langle\delta_{\tilde{u}=\xi}\, g_k,\varphi\big\rangle\dif r\bigg]=0,
\end{split}
\end{equation*}
which gives the $(\tilde{\mf}_t)$-martingale property.
\end{proof}

\end{prop}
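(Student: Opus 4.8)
The plan is to obtain the three martingale relations first for the approximations $u^n$, where they are explicit, then transport them to $(\tilde{\Omega},\tilde{\mf},\tilde{\prst})$ via the equality of laws from Proposition \ref{skorokhod}, and finally pass to the limit with the help of the Vitali convergence theorem. The structural point I would exploit is that the stochastic integral is never passed to the limit directly: it is already encoded in $\tilde{M}^n$ through the kinetic formulation \eqref{kinetaprox}. Indeed, comparing the definition of $M^n$ with \eqref{kinetaprox}, which $u^n$ satisfies $\prst$-a.s., one reads off
$$M^n(t)=\int_0^t\big\langle\delta_{u^n=\xi}\,\varPhi^n(u^n)\,\dif W,\varphi\big\rangle=\sum_{k\geq1}\int_0^t\big\langle\delta_{u^n=\xi}\,g_k^n,\varphi\big\rangle\,\dif\beta_k(r).$$
By \eqref{linrust} and the uniform bound of $\{u^n\}$ in $L^2(\Omega;L^2(0,T;L^2(\mt^N)))$ this is a square-integrable $(\mf_t)$-martingale, so, by the It\^o isometry,
$$(M^n)^2-\sum_{k\geq1}\int_0^\tec\big\langle\delta_{u^n=\xi}\,g_k^n,\varphi\big\rangle^2\,\dif r,\qquad M^n\beta_k-\int_0^\tec\big\langle\delta_{u^n=\xi}\,g_k^n,\varphi\big\rangle\,\dif r$$
are $(\mf_t)$-martingales as well.

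Next I would record that, for $s\leq t$ and $\gamma$ as in the proof of Proposition \ref{ident1}, the $(\mf_t)$-martingale property of these three processes is precisely the vanishing of the expectations $\stred\,\gamma(\varrho_s u^n,\varrho_s W)[\,\cdot\,]$ that appear in \eqref{exp1}--\eqref{exp3}. Since $\tilde{m}^n,\tilde{M}^n$ are built from $\tilde{u}^n$ by exactly the same formulas as $m^n,M^n$ from $u^n$, and the laws of $(\tilde{u}^n,\tilde{W}^n)$ and $(u^n,W)$ coincide (hence so do the laws of $(\tilde{u}^n,\tilde{W}^n,\tilde{m}^n)$ and $(u^n,W,m^n)$), the identities \eqref{exp1}--\eqref{exp3} hold. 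To conclude I would let $n\to\infty$: Proposition \ref{skorokhod} gives $(\tilde{u}^n,\tilde{W}^n)\to(\tilde{u},\tilde{W})$ $\tilde{\prst}$-a.s. in $\mathcal{X}$, and together with $b^n\to b$, $g_k^n\to g_k$ locally uniformly and the weak$^*$ convergences $\tilde{f}^n\to\tilde{f}$, $\tilde{m}^n\to\tilde{m}$ of Lemma \ref{weakstar} this yields $\tilde{M}^n\to\tilde{M}$, $\tilde{\beta}_k^n\to\tilde{\beta}_k$ $\tilde{\prst}$-a.s. along a subsequence; the energy estimate \eqref{energyest} and the growth bound \eqref{linrust} supply the uniform integrability needed for the Vitali convergence theorem, so the limiting identities hold and are exactly the asserted $(\tilde{\mf}_t)$-martingale property (adaptedness and integrability of the three processes being clear from the predictability of $\tilde{m}$, from Lemma \ref{weakstar}, and from \eqref{energyest}).

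The hard part will be the $\tilde{\prst}$-a.s. convergence of two kinds of terms in $\tilde{M}^n$. The first is the time-pointwise term $\langle\tilde{f}^n(t),\varphi\rangle$; here I would use that $\langle\ind_{v>\xi},\varphi\rangle=\int_{\mt^N}\int_{-\infty}^{v(x)}\varphi(x,\xi)\,\dif\xi\,\dif x$ is Lipschitz on $L^1(\mt^N)$, so the $L^1(0,T;L^1(\mt^N))$-convergence of $\tilde{u}^n$ yields convergence for almost every $t$. The second is the measure term $\langle\tilde{m}^n,\partial_\xi\varphi\rangle([0,t))$: the weak$^*$ convergence in $L^2(\tilde{\Omega};\mathcal{M}_b)$ only controls $\tilde{\stred}\big[\gamma\langle\tilde{m}^n,\psi\rangle\big]$ for $\psi\in C_b(\mt^N\times[0,T]\times\mr)$, so the time-discontinuity of $\ind_{[0,t)}$ must be absorbed by restricting to $t$ outside the at most countable set of atoms in time of the finite measure $\tilde{\stred}\,\tilde{m}$. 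Since the three identities then hold for almost every $s\leq t$, they extend to all $s\leq t$ by right-continuity of the processes, which finishes the argument.
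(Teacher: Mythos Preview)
Your proposal follows essentially the same route as the paper: identify $M^n$ with the stochastic integral through \eqref{kinetaprox}, deduce the three $(\mf_t)$-martingale relations on the original space, transfer them to $(\tilde{\Omega},\tilde{\mf},\tilde{\prst})$ by equality of laws, and pass to the limit via Vitali using \eqref{linrust}, \eqref{energyest}, Proposition \ref{skorokhod} and Lemma \ref{weakstar}. Your final paragraph goes beyond the paper in making explicit the two delicate points---the time-pointwise term $\langle\tilde f^n(t),\varphi\rangle$ and the measure term $\langle\tilde m^n,\partial_\xi\varphi\rangle([0,t))$---which the paper treats only by a global reference to Lemma \ref{weakstar} and Proposition \ref{skorokhod}; your handling of both (Lipschitz dependence on $L^1$ for the first, restriction to non-atomic times followed by right-continuity for the second) is the natural way to fill in those details.
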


\begin{proof}[Proof of Theorem \ref{pass}]
Once the Propositions \ref{ident1}, \ref{ident2} are established, it follows that the quadratic variation
$$\bigg\langle\!\!\!\bigg\langle\tilde{M}-\int_0^\tec\big\langle\delta_{\tilde{u}=\xi}\,\varPhi(\tilde{u})\,\dif\tilde{W},\varphi\big\rangle\bigg\rangle\!\!\!\bigg\rangle=0$$
and so for every $\varphi\in C_c^\infty(\mt^N\times\mr),\;t\in[0,T],\;\tilde{\prst}\text{-a.s.}$
\begin{equation*}\label{reseni}
\begin{split}
\big\langle \tilde{f}(t),&\varphi\big\rangle-\big\langle f_0,\varphi\big\rangle-\int_0^t\!\big\langle \tilde{f}(s),b(\xi)\cdotp\nabla\varphi\big\rangle\dif s-\int_0^t\!\Big\langle \tilde{f}(s),\sum_{i,j=1}^N\!\partial_{x_j}\big( A_{ij}(x)\partial_{x_i}\varphi\big)\Big\rangle\dif s\\
&=\int_0^t\big\langle\delta_{\tilde{u}=\xi}\,\varPhi(\tilde{u})\,\dif\tilde{W},\varphi\big\rangle+\frac{1}{2}\int_0^t\big\langle \delta_{\tilde{u}=\xi}G^2,\partial_\xi\varphi\big\rangle\,\dif s-\big\langle\tilde{m},\partial_\xi\varphi\big\rangle([0,t))
\end{split}
\end{equation*}
and the statement follows.
\end{proof}

\subsection{Pathwise solutions}

In order to finish the proof, we make use of the Gy\"{o}ngy-Krylov characterization of convergence in probability introduced in \cite{krylov}. It is useful in situations when the pathwise uniqueness and the existence of at least one martingale solution imply the existence of a unique pathwise solution.

\begin{prop}\label{diagonal}
Let $X$ be a Polish space equipped with the Borel $\sigma$-algebra. A sequence of $X$-valued random variables $\{Y_n;\,n\in\mn\}$ converges in probability if and only if for every subsequence of joint laws, $\{\mu_{n_k,m_k};\,k\in\mn\}$, there exists a further subsequence which converges weakly to a probability measure $\mu$ such that
$$\mu\big((x,y)\in X\times X;\,x=y\big)=1.$$
\end{prop}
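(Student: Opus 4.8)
The plan is to establish the two implications separately; the forward direction is immediate from the stability of convergence in probability under continuous operations, while the converse carries the weight of the argument.

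For the direct implication, assuming $Y_n\to Y$ in probability, I would observe that for \emph{any} two subsequences $\{n_k\},\{m_k\}$ the pair $(Y_{n_k},Y_{m_k})$ converges in probability in $X\times X$ to $(Y,Y)$ — since each coordinate does — hence converges in law; thus the joint laws $\mu_{n_k,m_k}$ converge weakly to $\mathrm{Law}(Y,Y)$, which is concentrated on the diagonal $\{(x,y):x=y\}$. So the stated condition holds, with the whole subsequence converging and no need to pass to a further one.

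For the converse I would argue by contradiction, exploiting the metrizability and completeness of convergence in probability. Fixing a complete metric $\rho$ on the Polish space $X$, the quantity $d_{\prst}(U,V)=\stred[\rho(U,V)\wedge1]$ metrizes convergence in probability on the space of $X$-valued random variables, and this space is complete because $(X,\rho)$ is; hence $\{Y_n\}$ converges in probability if and only if it is $d_{\prst}$-Cauchy. If it fails to be Cauchy, there are subsequences $\{n_k\},\{m_k\}$ and $\varepsilon>0$ with $d_{\prst}(Y_{n_k},Y_{m_k})\geq\varepsilon$ for all $k$, and a Chebyshev-type estimate yields $\delta>0$ with $\prst\big(\rho(Y_{n_k},Y_{m_k})>\delta\big)>\delta$ for all $k$. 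Applying the hypothesis to the joint laws $\mu_{n_k,m_k}$ of $(Y_{n_k},Y_{m_k})$, I extract a further subsequence (not relabelled) converging weakly to a probability measure $\mu$ on $X\times X$ concentrated on $\{x=y\}$. Since $(x,y)\mapsto\rho(x,y)\wedge1$ is bounded and continuous,
$$\stred\big[\rho(Y_{n_k},Y_{m_k})\wedge1\big]=\int_{X\times X}\big(\rho(x,y)\wedge1\big)\,\dif\mu_{n_k,m_k}\longrightarrow\int_{X\times X}\big(\rho(x,y)\wedge1\big)\,\dif\mu=0,$$
because $\rho$ vanishes on the diagonal; but $\stred[\rho(Y_{n_k},Y_{m_k})\wedge1]\geq(\delta\wedge1)\,\prst(\rho(Y_{n_k},Y_{m_k})>\delta)>(\delta\wedge1)\,\delta>0$ for every $k$, a contradiction. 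Hence $\{Y_n\}$ is $d_{\prst}$-Cauchy and therefore converges in probability.

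The main obstacle is the converse, and within it the only point requiring care is the passage from ``$\{Y_n\}$ does not converge in probability'' to the quantitative two-subsequence statement; this is precisely where completeness of the target Polish space is used. Once that reduction is in place, the portmanteau argument with the test function $\rho\wedge1$ is routine. (Alternatively one could invoke the classical principle that a sequence converges in probability provided every subsequence admits a further subsequence converging in probability to a common limit; the argument sketched above simply makes this self-contained in the present setting.)
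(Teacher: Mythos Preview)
Your argument is correct in both directions. The forward implication is handled cleanly, and for the converse your contradiction via the bounded continuous test function $(x,y)\mapsto\rho(x,y)\wedge1$ works exactly as stated; the completeness of the Polish target is used precisely where you say, to reduce non-convergence to failure of the Cauchy property. One minor remark: the Chebyshev-type step extracting $\delta$ is redundant, since you already have $d_{\prst}(Y_{n_k},Y_{m_k})=\int(\rho\wedge1)\,\dif\mu_{n_k,m_k}\geq\varepsilon$ directly, which contradicts convergence of this integral to zero without any further manipulation.

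As for comparison with the paper: there is nothing to compare. The paper does not prove this proposition; it is quoted as a known tool with a reference to Gy\"ongy--Krylov \cite{krylov} and then applied. Your write-up therefore supplies a self-contained proof where the paper simply invokes the literature.
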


We consider the collection of joint laws of $(u^n,u^m)$, denoted by $\mu_u^{n,m}$. For this purpose we define the extended phase space (cf. \eqref{pathspace})
$$\mathcal{X}^J=\mathcal{X}_u\times\mathcal{X}_u\times\mathcal{X}_W,\qquad\mathcal{X}^J_u=\mathcal{X}_u\times\mathcal{X}_u,$$
As above, denote by $\mu_u^n$ the law of $u^n$ and by $\mu_W$ the law of $W$. Set further
$$\mu_u^{n,m}=\mu_u^n\otimes\mu_u^m,\qquad\nu^{n,m}=\mu_u^n\otimes\mu_u^m\otimes\mu_W.$$

Similarly to Proposition \ref{tight} the following fact holds true. The proof is nearly identical and so will be left to the reader.

\begin{prop}
The collection $\{\nu^{n,m};\,n,m\in\mn\}$ is tight on $\mathcal{X}^J$.
\end{prop}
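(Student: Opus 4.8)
The plan is to deduce the claim directly from the tightness of the individual marginals, essentially repeating the argument of Theorem~\ref{tight}. Since $\nu^{n,m}=\mu_u^n\otimes\mu_u^m\otimes\mu_W$ is a product measure on the product space $\mathcal{X}^J=\mathcal{X}_u\times\mathcal{X}_u\times\mathcal{X}_W$, it suffices to produce, for every $\eta>0$, compact subsets of each of the three factors carrying mass at least $1-\eta/3$ uniformly in $n$ and $m$; their product is then compact in $\mathcal{X}^J$ and carries $\nu^{n,m}$-mass at least $1-\eta$ for all $n,m\in\mn$, whence tightness, and relative weak compactness follows from Prokhorov's theorem.

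First I would record that the a priori bounds \eqref{odhad1}, \eqref{odhad2} and \eqref{odhad4} hold for every $\varepsilon\in(0,1)$, hence in particular along the chosen subsequence $\varepsilon_n$. Therefore the compact embeddings and the Chebyshev-type estimates used in the proof of Theorem~\ref{tight} apply verbatim to the family $\{u^n;\,n\in\mn\}$, yielding for a given $\eta>0$ a set $B_R=B_{1,R}\cap B_{2,R}$ that is compact in $\mathcal{X}_u$ and satisfies $\mu_u^n(B_R)\geq 1-\eta/3$ for all $n\in\mn$; because the estimates are uniform in the parameter, the very same set also satisfies $\mu_u^m(B_R)\geq 1-\eta/3$ for all $m\in\mn$. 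For the Wiener factor, $\mu_W$ is a single Radon measure on the Polish space $\mathcal{X}_W$, hence tight, so there is a compact $C_\eta\subset\mathcal{X}_W$ with $\mu_W(C_\eta)\geq 1-\eta/3$.

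Combining these, $B_R\times B_R\times C_\eta$ is compact in $\mathcal{X}^J$ and, by independence of the factors,
\[
\nu^{n,m}\big(B_R\times B_R\times C_\eta\big)=\mu_u^n(B_R)\,\mu_u^m(B_R)\,\mu_W(C_\eta)\geq(1-\eta/3)^3\geq 1-\eta
\]
uniformly in $n,m\in\mn$, which is the assertion. I do not expect any genuine obstacle: the only point requiring care is that the regularity and energy estimates of the previous subsections be uniform in $\varepsilon$ (so that they survive along $\varepsilon_n$ and can be applied simultaneously to $u^n$ and $u^m$), and this is exactly how they were stated. As the author notes, the argument is nearly identical to that of Theorem~\ref{tight} and may be left to the reader.
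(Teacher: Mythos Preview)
Your argument is correct and follows precisely the approach the paper indicates (``nearly identical'' to Theorem~\ref{tight}, left to the reader): tightness of the marginals $\{\mu_u^n\}$ in $\mathcal{X}_u$ from the uniform estimates, tightness of the single law $\mu_W$ by Radon-ness, and then a compact product set in $\mathcal{X}^J$. One minor caveat: although the paper writes $\nu^{n,m}=\mu_u^n\otimes\mu_u^m\otimes\mu_W$, for the subsequent Skorokhod argument $\nu^{n,m}$ must really be the \emph{joint} law of $(u^n,u^m,W)$, and since $u^n,u^m$ are both driven by the same $W$ the factors are not independent; your product identity should therefore be replaced by the union bound $\nu^{n,m}\big((B_R\times B_R\times C_\eta)^c\big)\leq \mu_u^n(B_R^c)+\mu_u^m(B_R^c)+\mu_W(C_\eta^c)\leq\eta$, which yields the same conclusion.
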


Let us take any subsequence $\{\nu^{n_k,m_k};\,k\in\mn\}$. By the Prokhorov theorem, it is relatively weakly compact hence it contains a weakly convergent subsequence. Without loss of generality we may assume that the original sequence $\{\nu^{n_k,m_k};\,k\in\mn\}$ itself converges weakly to a measure $\nu$. According to the Skorokhod representation theorem, we infer the existence of a probability space $(\bar{\Omega},\bar{\mf},\bar{\prst})$ with a sequence of random variables $(\hat{u}^{n_k},\check{u}^{m_k},\bar{W}^k),k\in\mn,$ conver\-ging almost surely in $\mathcal{X}^J$ to a random variable $(\hat{u},\check{u},\bar{W})$ and
$$\bar{\prst}\big((\hat{u}^{n_k},\check{u}^{m_k},\bar{W}^k)\in\,\,\cdotp\big)=\nu^{n_k,m_k}(\cdot),\qquad\bar{\prst}\big((\hat{u},\check{u},\bar{W})\in\,\,\cdotp\big)=\nu(\cdot).$$
Observe that in particular, $\mu_u^{n_k,m_k}$ converges weakly to a measure $\mu_u$ defined by
$$\mu_u(\cdot)=\bar{\prst}\big((\hat{u},\check{u})\in\,\,\cdotp\big).$$
As the next step, we should recall the technique established in the previous section. Analogously, it can be applied to both $(\hat{u}^{n_k},\bar{W}^k),\,(\hat{u},\bar{W})$ and $(\check{u}^{m_k},\bar{W}^k),\,(\check{u},\bar{W})$ in order to show that $(\hat{u},\bar{W})$ and $(\check{u},\bar{W})$ are martingale kinetic solutions of \eqref{rovnice} defined on the same stochastic basis $(\bar{\Omega},\bar{\mf},(\bar{\mf}_t),\bar{\prst})$, where
$$\bar{\mf}_t=\sigma\big(\sigma\big(\varrho_t\hat{u},\varrho_t\check{u},\varrho_t\bar{W}\big)\cup\big\{N\in\bar{\mf};\;\bar{\prst}(N)=0\big\}\big),\quad t\in[0,T].$$
Since $\hat{u}(0)=\check{u}(0)\;\,\bar{\prst}$-a.s., we conclude from Theorem \ref{uniqueness} that $\hat{u}=\check{u}$ in $\mathcal{X}_u$ $\bar{\prst}$-a.s. hence
$$\mu_u\big((x,y)\in\mathcal{X}_u\times\mathcal{X}_u;\;x=y\big)=\bar{\prst}\big(\hat{u}=\check{u}\text{ in }\mathcal{X}_u\big)=1.$$
Now, we have all in hand to apply Proposition \ref{diagonal}. It implies that the original sequence $u^n$ defined on the initial probability space $(\Omega,\mf,\prst)$ converges in probability in the topology of $\mathcal{X}_u$ to a random variable $u$. Without loss of gene\-rality, we assume that $u^n$ converges to $u$ almost surely in $\mathcal{X}_u$ and again by the method from section \ref{passage} we finally deduce that $u$ is a pathwise kinetic solution to \eqref{rovnice}. Actually, identification of the limit is more straightforward here since in this case all the work is done for the initial setting and only one fixed driving Wiener process $W$ is considered.

\section{Existence - general initial data}

In this final section we provide an existence proof in the general case of $u_0\in L^p(\Omega;L^p(\mt^N)),$ for all $p\in[1,\infty)$. It is a straightforward consequence of the previous section.
We approximate the initial condition by a sequence $\{u^\varepsilon_0\}\subset L^p(\Omega;C^\infty(\mt^N)),$ $p\in[1,\infty),$ such that $u_0^\varepsilon\rightarrow u_0$ in $L^1(\Omega;L^1(\mt^N))$. That is, the initial condition $u_0^\varepsilon$ can be defined as a pathwise mollification of $u_0$ so that it holds true
\begin{equation}\label{iio}
\|u^\varepsilon_0\|_{L^p(\Omega;L^p(\mt^N))}\leq\|u_0\|_{L^p(\Omega;L^p(\mt^N))},\qquad\varepsilon\in(0,1),\;p\in[1,\infty).
\end{equation}
According to the previous section, for each $\varepsilon\in(0,1)$, there exists a kinetic solution $u^\varepsilon$ to \eqref{rovnice} with initial condition $u_0^\varepsilon$.
By application of the comparison principle \eqref{comparison},
$$\stred\|u^{\varepsilon_1}-u^{\varepsilon_2}\|_{L^1(0,T;L^1(\mt^N))}\leq T\,\stred\|u^{\varepsilon_1}_0-u^{\varepsilon_2}_0\|_{L^1(\mt^N)},\quad \varepsilon_1,\varepsilon_2\in(0,1).$$
Therefore, $\{u^\varepsilon;\,\varepsilon\in(0,1)\}$ is a Cauchy sequence in $L^1(\Omega;L^1(0,T;L^1(\mt^N)))$ and there exists $u\in L^1(\Omega;L^1(0,T;L^1(\mt^N)))$ such that
$$u^\varepsilon\longrightarrow u\quad\text{in}\quad L^1(\Omega;L^1(0,T;L^1(\mt^N))).$$
By \eqref{iio}, we still have the uniform energy estimates
\begin{equation}\label{en}
\begin{split}
\stred\sup_{0\leq t\leq T}\|u^\varepsilon(t)\|^p_{L^p(\mt^N)}\leq C_{T,u_0},\quad p\in[2,\infty),
\end{split}
\end{equation}
as well as (using the usual notation)
\begin{equation}\label{dd}
\stred\big|m^\varepsilon(\mt^N\times[0,T]\times\mr)\big|^2\leq C_{T,u_0}.
\end{equation}
Thus, using this observations as in Lemma \ref{weakstar}, one finds that there exists a subsequence $\{u^n;\,n\in\mn\}$ such that
\begin{enumerate}
 \item $f^n\overset{w^*}{\longrightarrow}f\;\quad\text{in}\quad L^\infty(\Omega\times\mt^N\times[0,T]\times\mr)\text{-weak}^*,$
 \item there exists a kinetic measure $m$ such that
$$m^n\overset{w^*}{\longrightarrow}m\;\quad\text{in}\quad L^2(\Omega;\mathcal{M}_b)\text{-weak}^*$$
and $m=n_1+n_2$, where $\,n_1=(\nabla u)^*A(x)(\nabla u)\delta_{u=\xi}$ and $n_2$ is almost surely a nonnegative measure over $\mt^N\times[0,T]\times\mr$.
\end{enumerate}
With these facts in hand, we are ready to pass to the limit in \eqref{kinet} and conclude that $u$ satisfies the kinetic formulation in the sense of distributions.
Note, that \eqref{en} remains valid also for $u$ so \eqref{integrov} follows and the proof of Theorem \ref{main} is complete.

\section*{Acknowledgments}
The author wishes to thank Arnaud Debussche and Jan Seidler for many discussions and valuable suggestions.

\end{document}